\documentclass{aart}

\usepackage{titlesec}
\titleformat{\subsection}[runin]{\normalfont\bfseries}{\thesubsection.}{.5em}{}[.]\titlespacing{\subsection}{0pt}{2ex plus .1ex minus .2ex}{.8em}
\titleformat{\subsubsection}[runin]{\normalfont\itshape}{\thesubsubsection.}{.3em}{}[.]\titlespacing{\subsubsection}{0pt}{1ex plus .1ex minus .2ex}{.5em}

\usepackage[labelfont=sc,font=small,labelsep=period]{caption}
\setlength{\intextsep}{2em}
\setlength{\textfloatsep}{2em}

\usepackage[letterpaper, hmargin=1in, top=1.5in, bottom=1.9in, footskip=0.7in]{geometry}

\usepackage{amsmath} 
\usepackage{amssymb}
\usepackage{amsfonts}
\usepackage{latexsym}
\usepackage{amsthm}
\usepackage{amsxtra}
\usepackage{amscd}
\usepackage{bbm}
\usepackage{mathrsfs}
\usepackage{bm}

\usepackage{graphicx, color}


\setlength{\unitlength}{1cm}


\newcommand{\f}[1]{\bm{\mathrm{#1}}} 
\newcommand{\bb}{\mathbb} 
\renewcommand{\cal}{\mathcal}

\newcommand{\ol}[1]{\overline{#1} \!\,} 
\newcommand{\wh}{\widehat}
\newcommand{\wt}{\widetilde}

\newcommand{\me}{\mathrm{e}}
\newcommand{\ii}{\mathrm{i}}
\newcommand{\dd}{\mathrm{d}}
\newcommand{\col}{\mathrel{\mathop:}}

\newcommand{\deq}{\mathrel{\mathop:}=}
\newcommand{\eqd}{=\mathrel{\mathop:}}
\renewcommand{\le}{\leqslant}
\renewcommand{\leq}{\leqslant}
\renewcommand{\ge}{\geqslant}
\renewcommand{\geq}{\geqslant}





\newcommand{\ind}[1]{\f 1 (#1)}

\renewcommand{\epsilon}{\varepsilon}

\renewcommand{\P}{\mathbb{P}}
\newcommand{\E}{\mathbb{E}}
\newcommand{\R}{\mathbb{R}}

\newcommand{\N}{\mathbb{N}}
\newcommand{\Z}{\mathbb{Z}}


\newcommand{\pb}[1]{\bigl({#1}\bigr)}
\newcommand{\pB}[1]{\Bigl({#1}\Bigr)}
\newcommand{\pbb}[1]{\biggl({#1}\biggr)}
\newcommand{\pBB}[1]{\Biggl({#1}\Biggr)}

\newcommand{\qB}[1]{\Bigl[{#1}\Bigr]}
\newcommand{\qbb}[1]{\biggl[{#1}\biggr]}
\newcommand{\qBB}[1]{\Biggl[{#1}\Biggr]}

\newcommand{\hb}[1]{\bigl\{{#1}\bigr\}}

\newcommand{\hbb}[1]{\biggl\{{#1}\biggr\}}

\newcommand{\abs}[1]{\lvert #1 \rvert}
\newcommand{\absb}[1]{\bigl\lvert #1 \bigr\rvert}
\newcommand{\absB}[1]{\Bigl\lvert #1 \Bigr\rvert}
\newcommand{\absbb}[1]{\biggl\lvert #1 \biggr\rvert}

\newcommand{\norm}[1]{\lVert #1 \rVert}
\newcommand{\normb}[1]{\bigl\lVert #1 \bigr\rVert}

\newcommand{\normbb}[1]{\biggl\lVert #1 \biggr\rVert}
\newcommand{\normBB}[1]{\Biggl\lVert #1 \Biggr\rVert}

\newcommand{\avg}[1]{\langle #1 \rangle}

\newcommand{\avgbb}[1]{\biggl\langle #1 \biggr\rangle}






\DeclareMathOperator{\tr}{Tr}

\DeclareMathOperator{\re}{Re}
\DeclareMathOperator{\im}{Im}

\theoremstyle{plain} 
\newtheorem{theorem}{Theorem}[section]
\newtheorem*{theorem*}{Theorem}
\newtheorem{lemma}[theorem]{Lemma}
\newtheorem*{lemma*}{Lemma}
\newtheorem{corollary}[theorem]{Corollary}
\newtheorem*{corollary*}{Corollary}
\newtheorem{proposition}[theorem]{Proposition}
\newtheorem*{proposition*}{Proposition}
\newtheorem{definition}[theorem]{Definition}
\newtheorem*{definition*}{Definition}
\theoremstyle{definition} 

\newtheorem*{example*}{Example}
\newtheorem{remark}[theorem]{Remark}

\newtheorem*{remark*}{Remark}
\newtheorem*{remarks*}{Remarks}

\newtheorem*{convention*}{Convention}

\usepackage{cite}

\oddsidemargin=0in
\evensidemargin=0in
\textwidth=6.5in

\newcommand{\beqa}{\begin{eqnarray}}
\newcommand{\eeqa}{\end{eqnarray}}
\newcommand{\e}{\varepsilon}

\newcommand{\rd}{{\rm d}}
\newcommand{\bR}{{\mathbb R}}

\newcommand{\bT}{{\mathbb T}}
\newcommand{\bZ}{{\mathbb Z}}
\newcommand{\non}{\nonumber}

\renewcommand{\Re}{\mbox{Re}}

\newcommand{\bv}{{\bf{v}}}

\newcommand{\bS}{\bf{S}}

\newcommand{\al}{\alpha}

\newcommand{\be}{\begin{equation}}
\newcommand{\ee}{\end{equation}}

\newcommand{\cE}{{\mathcal E}}

\newcommand{\ov}{\overline}

\newcommand{\sdot}{\cdot}

\numberwithin{equation}{section}
\numberwithin{theorem}{section}

\newcommand{\Tdet}{\Theta}
\newcommand{\Pdet}{\theta}
\newcommand{\Adet}{\cal A}


\title{Delocalization and Diffusion Profile for Random Band Matrices}

\author{
L\'aszl\'o Erd\H os${}^1$\thanks{Partially supported
by SFB-TR 12 Grant of the German Research Council} \quad
Antti Knowles${}^2$\thanks{Partially supported by NSF grant DMS-0757425} \quad
Horng-Tzer Yau${}^3$\thanks{Partially supported
by NSF grants  DMS-0804279 and Simons  Investigator Award }  \quad
Jun Yin${}^4$\thanks{Partially supported by NSF grants DMS-1001655 and DMS-1207961} \\\\\\
\normalsize{Institute of Mathematics, University of Munich,
Theresienstrasse 39, 80333 Munich, Germany} \\ 
\normalsize{lerdos@math.lmu.de ${}^1$} \\ \\
\normalsize{Courant Institute, New York University, 251 Mercer Street, New York, NY 10012, USA} \\
\normalsize{knowles@cims.nyu.edu ${}^2$} \\ \\
\normalsize{Department of Mathematics, Harvard University, Cambridge MA 02138, USA} \\
\normalsize{htyau@math.harvard.edu ${}^3$}  \\  \\
\normalsize{Department of Mathematics, University of Wisconsin, Madison, WI 53706, USA} \\
\normalsize{jyin@math.uwisc.edu ${}^4$} \\ \\ 
}


\begin{document}

\maketitle

\begin{abstract}
We consider Hermitian and symmetric random band matrices $H = (h_{xy})$ in $d \geq 1$ dimensions.
The matrix entries $h_{xy}$, indexed by $x,y \in (\bZ/L\bZ)^d$, are
independent, centred random variables with variances $s_{xy} = \E |h_{xy}|^2$. We assume 
that $s_{xy}$ is negligible if $|x-y|$ exceeds the band width $W$.
In one dimension we prove that the eigenvectors of $H$ are delocalized if $W\gg L^{4/5}$.
We also show that the magnitude of the matrix entries $\abs{G_{xy}}^2$ 
of the resolvent $G=G(z)=(H-z)^{-1}$ is self-averaging and we compute $\E \abs{G_{xy}}^2$.
We show that, as $L\to\infty$ and $W\gg L^{4/5}$, the behaviour
of $\E |G_{xy}|^2$ 
 is governed by a diffusion operator whose diffusion constant we compute. Similar results are obtained in higher dimensions.
\end{abstract}


\vspace{0.5cm}

{\it Keywords:} random band matrix, Anderson model, localization length, quantum diffusion.

\newpage

\section{Introduction}

Random band matrices $H=(h_{xy})_{x,y\in \Gamma}$ represent  quantum systems on a large finite graph 
$\Gamma$ with random quantum transition amplitudes effective up to distances of order $W$.
The matrix entries are independent, centred
random variables. The variance $s_{xy} \deq \E |h_{xy}|^2$ depends on the distance 
between the two sites $x$ and $y$, and it typically decays with the distance on a characteristic 
length scale $W$, called the \emph{band width} of $H$. 
This terminology comes from the simplest one-dimensional
model where the graph $\Gamma = \{1, 2, \dots, N\}$ is a path on $N$ vertices, and the matrix entries $h_{xy}$
are negligible if $|x-y|\geq W$. In particular, if $W=N$ and all
variances are equal, we recover the well-known Wigner matrix, which corresponds to a mean-field model.
Higher-dimensional models 
are obtained if $\Gamma$ taken to be the box 
of linear size $L$  in $\Z^d$. In this case the dimension of the matrix is
$N=L^d$.

Typically, $W$ is a mesoscopic scale, larger than the lattice spacing but smaller
than the diameter $L$ of the system: $1\ll W\ll L$.
These models are natural interpolations between random Schr\"odinger
operators with short range quantum transitions such as the Anderson model \cite{And}
and mean-field random matrices such as Wigner matrices \cite{Wig}. In particular,
random band matrices may be used to model the \emph{Anderson metal-insulator phase transition}, which we briefly outline.

The key physical parameter of all these models is the \emph{localization length} $\ell$,
which describes the typical length scale of the eigenvectors of $H$.
 The system is said to be {\it delocalized}
 if the localization length is comparable with the system size, $\ell\sim L$,
and it is {\it localized} otherwise.  Delocalized systems are electric conductors,
while localized systems are insulators.

Nonrigorous supersymmetric calculations \cite{FM} show that for random band matrices
the localization length is of order $\ell\sim W^2$ in $d=1$ dimension.
In $d=2$, the localization length is expected to be exponentially growing in $W$,
and in $d\ge 3$, it is macroscopic, $\ell\sim L$, i.e.\
the system is delocalized. We refer to the overview papers of Spencer
\cite{Spe, Sp} and to the paper of Schenker \cite{Sch} for more details on these conjectures.

These predictions are in accordance with those for the \emph{Anderson model}, where the random matrix is of the form $-\Delta+\lambda V$; here $\Delta$ is the lattice Laplacian, $V$ a random potential (i.e.\ a diagonal matrix with i.i.d.\ entries), and $\lambda$ a small coupling constant.
The localization length is $\ell\sim \lambda^{-2}$ in 
the regime of strong localization, which corresponds to the whole spectrum for $d=1$ and a neighbourhood of the spectral edges for $d > 1$.
This result follows from the rigorous multiscale analysis
of Fr\"ohlich and Spencer \cite{FroSpe} as well as 
from the fractional moment method of Aizenman and Molchanov \cite{AizMol}.
The two-dimensional Anderson model is conjectured
to be in the weak localization regime with $\ell\sim \exp (\lambda^{-2})$
throughout the spectrum \cite{Abr}, but this has so far not been proved.
In dimensions $d\ge 3$, the prediction is that there is a threshold energy, called the mobility
edge, $E_0$, that separates the localized regime
near the band edges from the delocalized regime in the bulk spectrum.
The localization length is expected to diverge as the energy $E$ approaches
the mobility edge from the localization side. The increase of the
localization length as an inverse power of $E-E_0$ has been
rigorously established up to a certain scale in \cite{Spen, Elg},
but this analysis does not allow $E$ to actually reach the conjectured value
of $E_0$.
A  key  open question for the Anderson model is to establish the metal-insulator
transition, i.e.\ to show that the mobility edge indeed exists.

For random band matrices, the metal-insulator transition can be investigated
even in $d=1$ by varying the band width $W$. 
The prediction that the localization
length $\ell$ is of order $W^2$ can be recast in the form that the
eigenvectors are delocalized if $W \ge L^{1/2}$.
Currently only lower and upper bounds have been established for $\ell$.
On the side of localization, Schenker \cite{Sch} proved that $\ell\le W^8$,
uniformly in the system size,
by extending the methods of the proofs of the Anderson localization for random Schr\"odinger operators.
As a lower bound, $\ell\ge W$ was proved in \cite{EYY1} by using
a self-consistent equation for the diagonal matrix entries $G_{xx}$
of the Green function $G=G(z)=(H-z)^{-1}$.
In particular, Wigner matrices ($W=L$) are completely delocalized; in fact
this has been proven earlier in \cite{ESY1, ESY2, ESY4} using a simpler self-consistent
equation for the trace of the Green function, $\tr G$.
This lower bound was improved to $\ell \ge W^{7/6}$ in \cite{EK1, EK2}
by using diagrammatic perturbation theory.
In fact, not only was the lower bound on localization length established,
but it was also shown that the unitary time evolution, $\me^{\ii tH}$,
behaves diffusively on the spatial scale $W$, i.e.\ the typical propagation
distance is $\sqrt{t} W$. Thus, the mechanism responsible for the delocalization of random band matrices is a random walk (in fact, a superposition of random walks) with step size of order $W$. Showing that the localization length is greater than the naive size $W$ therefore requires a control of the random walk for large times. For technical reasons, in \cite{EK1, EK2} the time evolution could only be controlled up to time $t\le W^{1/3}$, which corresponds to delocalization on the scale $W^{1 + 1/6}$. The work \cite{EK1,EK2} was partly motivated by 
a similar results for the long-time evolution for the Anderson model
\cite{ESalY1, ESalY2, ESalY3} combined with an algebraic renormalization
using Chebyshev polynomials \cite{FS, So1}.
 
In the current paper we develop a new self-consistent equation (see \eqref{Tself}) which
keeps track of all matrix entries of the Green function $G$, and not only the
diagonal ones as in \cite{EYY1}. We 
show that $|G_{xy}|^2$ is self-averaging and $\E |G_{xy}|^2$ 
behaves as the resolvent of a diffusion operator associated with a superposition of random walks with step size of order $W$. This result can then be translated into
a lower bound on the localization length.

More precisely, for $d=1$ we obtain full control on $|G_{xy}(z)|^2$
for relatively broad bands, $W \gg N^{4/5}$, and 
for $\eta =\im z \ge (W/N)^2$. The condition 
$W\gg N^{4/5}$ is technical.  The condition on $\eta$ comes from the facts that 
$t = \eta^{-1}$ corresponds to the time scale of the random walk, and a random walk with step size $W$ 
in a box of size $N$ reaches equilibrium in a time of order $(N/W)^2$.
As a corollary, we prove that most eigenvectors are
delocalized if $W\gg N^{4/5}$. This improves the exponent 
in  \cite{EK1,EK2}, where delocalization
for $W\gg N^{6/7}$ was proved. However, 
unlike in \cite{EK1,EK2}, here we do not obtain
a lower bound on the localization length $\ell$ uniformly in $N$.
We also prove analogous results in higher dimensions.  In addition, we investigate the case where 
the variances $s_{xy}$ of the matrix entries decay slowly according to the power law
 $\abs{x - y}^{-(1+\beta)}$ for $0<\beta<2$. In this regime the system exhibits
 superdiffusive behaviour. In particular, we may allow decay of the form $\abs{x-y}^{-2}$, which is critical in the sense of \cite{MFDQS}.

One key open question for random band matrices is to control the resolvent $G(z)$
for $\eta =\im z\ll W^{-1}$. None of the results  mentioned above
yield a nontrivial control below $W^{-1}$. In the regime $\eta \ge W^{-1}$ 
robust pointwise bounds on $G_{xy}$ have been obtained 
with high probability \cite{EYY1}.
For $\frac{1}{N} \E \tr G(z)$ and for $\eta\ge W^{-0.99}$,
a more precise error estimate was derived
for a special class of Bernoulli entries in \cite{So2}.  However,  
controlling  the quantity $\frac{1}{N} \E \tr G(z)$ 
does not yield information on the localization length. 
In the current paper we obtain much more precise  bounds on $|G_{xy}|^2$
in the regime $\eta\ge (W/N)^2\ge W^{-1/2}$ ,
which in particular imply delocalization bounds. 

Supersymmetric (SUSY) methods offer a very attractive approach to studying 
the delocalization transition in band matrices,
but rigorous control of the ensuing functional integrals away from the
saddle points is difficult. This task
has been performed for the density of states of a special
three-dimensional Gaussian model \cite{DPS}; this is the only
result where a nontrivial control for $\eta\le W^{-1}$
(in fact, uniform in $\eta$) was obtained. 
The SUSY method has so far only been applied to the expectation of single Green function,
$\E G$, and not to its square, $\E |G|^2$.

The analysis of the trace of the single Green function yields the limiting spectral density of $H$
which is the Wigner semicircle law provided the band width $W$ diverges as $L \to \infty$. For band matrices
the semicircle law on large scales, corresponding to spectral parameter $\eta>0$ independent of $N$,
 was given in \cite{MPK}.
More recently, a \emph{semicircle law on small scales}, in which $\eta \ll 1$, was derived in \cite{EYY1} 
and generalized in \cite{EKYY4}. The results of \cite{EYY1, EKYY4} are summarized in Lemma \ref{lm:lsc} below. As an application of our method, we prove a further improvement of the semiricle law in Theorem \ref{lm:noprof} below.

The main new ingredient in this paper is the self-consistent equation 
for the matrix  $T$, whose entries
$$
   T_{xy} \;\deq\; \sum_i s_{xi} |G_{iy}|^2
$$
are local averages of $|G_{xy}|^2$. We show in Theorem~\ref{thm:T} below
that $T$ satisfies a self-consistent equation of the form
\begin{equation} \label{self-const intro}
   T \;=\; |m|^2 ST+|m|^2 S + \cE\,,
\end{equation}
where $S$ is the matrix of variances $(s_{xy})$,
$m \equiv m(z)$ is an explicit function of the spectral parameter $z=E+\ii \eta$ (see \eqref{definition of msc} below),
and $\cE$ is an error term.
Neglecting the error term $\cE$, we obtain
$$
  T \;\approx\; \frac{|m|^2S}{1-|m|^2S}\,.
$$
In this paper we implement the band structure of $H$ using a symmetric probability density $f$ on $\R^d$,
 by requiring that $s_{xy} \approx W^{-d} f((i - j) / W)$ (see Section \ref{sec:setup} below for the precise statement).
Using translation invariance of $S$ and the Taylor expansion of its Fourier transform $\wh S(p)$
in the low momentum regime, we obtain for $\abs{p} \ll W^{-1}$ that
\begin{equation} \label{low-p exp}
\wh S(p) \;\approx\; 1 - W^2 (p \sdot D p) + \cdots\,,
\end{equation}
where $D$ is the matrix of second moments of $f$ (see \eqref{Dd} below).
In order to give the leading-order behaviour of $T$, we
use  $|m|^2= 1-\alpha\eta+ O(\eta^2)$ (see \eqref{m2} below), where
\begin{equation} \label{def alpha}
\alpha \;\equiv\; \alpha(E) \;\deq\; \frac{2}{\sqrt{4 - E^2}} \qquad (E = \re z)\,.
\end{equation}
Therefore the Fourier transform of $T$ is approximately given by
\begin{equation} \label{approx T hat}
    \frac{\alpha^{-1}}{\eta +  W^2(p \sdot D_{\rm eff} \, p)} \qquad \text{where} \qquad  D_{\rm eff} \;\deq\;
  \frac{D}{\alpha}\,,
\end{equation}
in the regime $\abs{p}\ll W^{-1}$  and $\eta\ll 1$. 
This corresponds to the  diffusion approximation on scales larger than $W$
with  an effective diffusion constant  $D_{\rm eff}$. In the language of diagrammatic perturbation theory, the change from $D$ to $D_{\rm eff}$ has the interpretation of a self-energy renormalization. This result coincides with Equation (1.5.5) of \cite{Sp}, which was obtained by computing the sum of ladder diagrams in a high-moment expansion.

The main result of this paper is a justification of this heuristic argument in a certain range of parameters. The error term $\cE$ contains fluctuations of
local averages. Roughly speaking,
we need to control the size of $\sum_x \big[ |G_{xy}|^2 - P_x|G_{xy}|^2\big]$,
 where $P_x$ denotes partial expectation
with respect to the matrix entries in the $x$-th row (see  $\wt T_{xy}$ in \eqref{fluc1T} below).
Unfortunately, $|G_{xy}|^2$ and $|G_{x'y}|^2$ for  $x\ne x'$ are not independent;
in fact they are strongly correlated for small $\eta$, and they do not behave like 
independent random variables.  Estimating high moments of these averages
requires an unwrapping of the hierarchical correlation structure among several
resolvent matrix entries. The necessary estimates are quite involved.
They are a special case of the more general Fluctuation Averaging Theorem
 that is published separately \cite{EKY2}, and 
was originally developed for application in the current paper. There have been several previous results in this direction; see 
\cite[Lemma 5.2]{EYY2},  \cite[Lemma 4.1]{EYY3}, 
 \cite[Theorem 5.6]{EKYY1}, and \cite[Theorem 3.2]{PY}.
 The Fluctuation Averaging Theorem 
generalizes these ideas to arbitrary monomials of $G$ and
exploits an additional cancellation mechanism in averages
of $|G_{xy}|^2$ that is not present in averages of $G_{xx}$.
 For more details, see \cite{EKY2}.

\section{Formulation of the results}

\subsection{Setup} \label{sec:setup}
Fix $d \in \N$ and let $f$ be a smooth and symmetric (i.e.\ $f(x) = f(-x)$) probability density on $\R^d$.
Let $L$ and $W$ be integers satisfying
\begin{equation} \label{lower bound on W}
L^\delta \;\leq\; W \;\leq\; L
\end{equation}
for some fixed $\delta > 0$. The parameter $L$ is the fundamental large quantity of our model. Define the $d$-dimensional discrete torus
\begin{equation*}
\bb T^d_L \;\deq\; [-L/2, L/2)^d \cap \Z^d\,.
\end{equation*}
Thus, $\bb T^d_L$ has $N \deq L^d$ lattice points. For the following we fix an (arbitrary) ordering of $\bb T_L^d$, which allows us to identify it with $\{1, \dots, N\}$. We define the canonical representative of $i \in \Z^d$ through
\begin{equation*}
[i]_L \;\deq\; (i + L \Z^d) \cap \bb T^d_L\,,
\end{equation*}
and introduce the periodic distance
\begin{equation*}
\abs{i}_L \;\deq\; \absb{[i]_L}\,,
\end{equation*}
where $\abs{\cdot}$ denotes Euclidean distance in $\R^d$.

Define the $N \times N$ matrix $S(L,W) \equiv S = (s_{ij} \col i,j \in \bb T_L^d)$ through
\begin{equation}\label{sij}
s_{ij} \;\deq\; \frac{1}{Z_{L,W}} \, f \pbb{\frac{[i - j]_L}{W}}\,,
\end{equation}
where $Z_{L,W}$ is a normalization constant chosen so that $S$ is a stochastic matrix:
\begin{equation}\label{stoch}
\sum_j s_{ij} \;=\; 1
\end{equation}
for all $i \in \bb T_L^d$. Unless specified otherwise, summations
are always over the set $\bb T_L^d$.
By symmetry of $f$ we find that $S$ is symmetric: $s_{ij} = s_{ji}$.
As a stochastic matrix, the spectrum of $S$ lies in $[-1,1]$.
In fact it is proved
in Lemma A.1 of \cite{EYY1} that there exists a positive constant $\delta$, depending only on $f$,
such that
\begin{equation}
\label{Slow}
   -1+\delta \;\leq\; S\le 1\,.
\end{equation}

We let $(\zeta_{ij} \col i \leq j)$, where $i,j \in \bb T_L^d$, be a family of independent, 
complex-valued, centred random variables $\zeta_{ij} \equiv \zeta_{ij}^{(N)}$ satisfying
\begin{equation}\label{zetacond}
\E \zeta_{ij} \;=\; 0\,, \qquad \E \abs{\zeta_{ij}}^2 \;=\; 1\,, \qquad \zeta_{ii}\in \bR.
\end{equation}
 For $i>j$ we define 
$$
    \zeta_{ij} \; \deq\; \bar \zeta_{ji}.
$$
We define the band matrix $H = (h_{ij})_{i,j\in \bT_L^d}$ through
\begin{equation}\label{def:band}
h_{ij} \;\deq\; (s_{ij})^{1/2} \, \zeta_{ij}\,.
\end{equation}
Thus we have $H = H^*$ and
\begin{equation}
\E \abs{h_{ij}}^2 \;=\; s_{ij}\,.
\end{equation}
In particular, we may consider the two classical symmetry classes of random matrices: 
real symmetric and complex Hermitian. For \emph{real symmetric band matrices} we assume
\begin{equation} \label{RS}
\zeta_{ij} \in \R \quad \text{for all} \quad i \leq j \,.
\end{equation}
For {\it complex Hermitian band matrices}
we assume
\begin{equation} \label{CH}
\E \zeta_{ij}^2 = 0 \quad \text{for all} \quad i < j\,.
\end{equation}
in addition  to \eqref{zetacond}.
A common way to satisfy \eqref{CH} is to choose the real and imaginary parts of $\zeta_{ij}$
 to be independent with identical variance. As in \cite{EKY2}, 
our results also hold without this assumption, but we omit the details of this generalization
to avoid needless complications.

We introduce the parameter
\begin{equation}  \label{s leq W}
M \;\equiv\; M_N \;\deq\; \frac{1}{\max_{i, j} s_{ij}}\,.
\end{equation}
From the definition of $S$ it is easy to see that $Z_{N,W} = W^d +O(W^{d-1})$. In particular,
\begin{equation*}
M \;=\; \pb{W^d + O(W^{d - 1})} / \norm{f}_\infty\,.
\end{equation*}

We assume that the random variables $\zeta_{ij}$ have finite moments, uniformly in $N$, $i$, and $j$, in the sense that for all $p \in \N$ there is a constant $\mu_p$ such that
\begin{equation} \label{finite moments}
\E \abs{\zeta_{ij}}^p \;\leq\; \mu_p
\end{equation}
for all $N$, $i$, and $j$.

The following definition introduces a notion of a high-probability bound that is suited for our purposes.
\begin{definition}[Stochastic domination]\label{sdI}
Let $X = \pb{X^{(N)}(u) \col N \in \N, u \in U^{(N)}}$ be a family of random variables, where $U^{(N)}$ is a possibly $N$-dependent parameter set. Let $\Psi = \pb{\Psi^{(N)}(u) \col N \in \N, u \in U^{(N)}}$ be a deterministic family satisfying $\Psi^{(N)}(u) \geq 0$. We say that $X$ is \emph{stochastically dominated by $\Psi$, uniformly in $u$,} if for all $\epsilon > 0$ and  $D > 0$ we have
\begin{equation*}
\sup_{u \in U^{(N)}} \P \qB{\absb{X^{(N)}(u)} > N^\epsilon \Psi^{(N)}(u)} \;\leq\; N^{-D}
\end{equation*}
for large enough $N\ge N_0(\e, D)$. Unless stated otherwise, 
throughout this paper the stochastic 
domination will always be uniform in all parameters apart from the parameter $\delta$ in \eqref{lower bound on W} and the sequence of constants $\mu_p$ in \eqref{finite moments}; thus, $N_0(\e, D)$ also depends on $\delta$ and $\mu_p$.
If $X$ is stochastically dominated by $\Psi$, uniformly in $u$, we use the equivalent notations
\begin{equation*}
X \;\prec\; \Psi \qquad \text{and} \qquad X \;=\; O_\prec(\Psi)\,.
\end{equation*}
\end{definition}
For example, using Chebyshev's inequality and \eqref{finite moments} one easily finds that 
\begin{equation}\label{hsmallerW}
h_{ij} \;\prec\; (s_{ij})^{1/2} \;\leq\; M^{-1/2}\,,
\end{equation}
so that we may also write $h_{ij} = O_\prec((s_{ij})^{1/2})$.
The relation $\prec$ satisfies the familiar algebraic rules of order relations.
The general statements are formulated later in Lemma~\ref{lemma: basic properties of prec}.

We remark that Definition \ref{sdI} is tailored to the assumption that \eqref{finite moments}
holds for any $p$. If  \eqref{finite moments} only holds for some large but fixed $p$ then all of our results still hold, but in a somewhat weaker sense. Indeed,
the control of the exceptional events in our theorems 
is expressed via the relation $\prec$. If only finitely many
moments are assumed to be finite in \eqref{finite moments}, then
  the exponents $\e$ and $D$ in the definition of $\prec$
cannot be chosen to be arbitrary, and will in fact depend on $p$.
Repeating our arguments under this weaker assumption would require us to follow all of these exponents
through the entire proof. Our assumption that \eqref{finite moments}
holds for any $p$ streamlines our statements and proofs, by avoiding the need to keep track of the precise values of these parameters.

Throughout the following we make use of a spectral parameter
\begin{equation*}
z \;=\; E + \ii \eta \,, \qquad E \in \R, \qquad \eta > 0\,.
\end{equation*}
We choose and fix two arbitrary (small) global constants $\gamma > 0$ and $\kappa > 0$.
 All of our estimates will depend on $\kappa$ and $\gamma$, and we shall often 
omit the explicit mention of this dependence. Set
\begin{equation}\label{def:S}
\f S \;\equiv\; \f S^{(N)}(\kappa, \gamma) \;\deq\; \hb{E + \ii \eta \col -2 + \kappa \leq E \leq 2 - \kappa \,,\, M^{-1 + \gamma} \leq \eta \leq 10}\,.
\end{equation}
We shall always assume that the spectral parameter $z$ lies in $\f S(\kappa, \gamma)$. In this paper we always consider families $X^{(N)}(u) = X^{(N)}_i(z)$ indexed by $u = (z,i)$, where $z \in \f S(\kappa,\gamma)$ and $i$ takes on values in some finite (possibly $N$-dependent or empty) index set.

We introduce the Stieltjes transform of Wigner's semicircle law, defined by
\begin{equation} \label{definition of msc}
m(z) \;\deq\; \frac{1}{2 \pi} \int_{-2}^2 \frac{\sqrt{4 - \xi^2}}{\xi - z} \, \dd \xi\,.
\end{equation}
 It is well known that the Stieltjes transform $m$ is characterized by the unique solution of
\begin{equation} \label{identity for msc}
m(z) + \frac{1}{m(z)} + z \;=\; 0
\end{equation}
with $\im m(z) > 0$ for $\im z > 0$. Thus we have
\begin{equation} \label{explicit m}
m(z) \;=\; \frac{-z + \sqrt{z^2 - 4}}{2}\,.
\end{equation}
To avoid confusion, we remark that the Stieltjes transform $m$ was denoted by $m_{sc}$ 
in the papers \cite{ESY1, ESY2, ESY3, ESY4,ESY5,ESY6, ESY7, ESYY, EYY1, EYY2, EYY3, EKYY1, EKYY2}, in which $m$ had a different meaning from \eqref{definition of msc}. 

We define the \emph{resolvent} of $H$ through
\begin{equation*}
G \;\equiv\; G(z) \;\deq\; (H - z)^{-1}\,,
\end{equation*}
and denote its entries by $G_{ij}(z)$. In the following sections we list our main results on the resolvent matrix entries.

We conclude this section by introducing some notation that will be used throughout the paper. We use $C$ to denote a generic large positive constant, which may depend on some fixed parameters and whose value may change from one expression to the next. Similarly, we use $c$ to denote a generic small positive constant. For two positive quantities $A_N$ and $B_N$ we sometimes use the notation $A_N \asymp B_N$ to mean $c A_N \leq B_N \leq C A_N$. Moreover, we use $A_N \ll B_N$ to mean that there exists a constant $c > 0$ such that $A_N \leq N^{-c} B_N$; we also use $A_N \gg B_N$ to denote $B_N \ll A_N$. (Note that these latter conventions are nonstandard.) Finally, we introduce the Japanese bracket $\avg{x} \deq \sqrt{1 + \abs{x}^2}$. Most quantities 
in this paper depend on the spectral parameter $z$, which we however mostly omit from the notation.

For simplicity, here we state our main results assuming that $d = 1$ and that $f$ satisfies the decay condition
\begin{equation} \label{decay of f}
\abs{f(x)} \;\leq\; C_n \avg{x}^{-n} \qquad \text{for all } n \in \N\,.
\end{equation}
Since $d = 1$, we have $N = L$ and we shall consistently use $N$ instead of $L$. Similarly, $M \asymp W$, and we shall consistently use $W$ in estimates. We also abbreviate $\bb T_N^1 \equiv \bb T$.

The generalization of our results to $d > 1$ and slowly decaying $f$ is straightforward, and will be given in Section~\ref{sec:gen}. We emphasize that
the core of our argument, given in Sections~\ref{sec:prelim}--\ref{sec:inv}, is valid in general, independent of the dimension.

\subsection{Improved local semicircle law for resolvent entries and delocalization}
Throughout this section we assume $d = 1$ and \eqref{decay of f}.
The Wigner semicircle law states that the normalized trace,  $\frac{1}{N}\tr G(z)$,
is asymptotically given by $m(z)$. In fact, this
asymptotics holds even for individual matrix entries. Our first
theorem controls the ($z$-dependent) random variable
\begin{equation*}
\Lambda(z) \;\deq\; \max_{x,y} \absb{G_{xy}(z) - \delta_{xy} m(z)}\,.
\end{equation*}
For the following we introduce the deterministic control parameter $\Phi \equiv \Phi^{(N)}(z)$ through
\begin{equation} \label{def Phi}
\Phi^2 \;\deq\; \max \hbb{\frac{1}{N\eta}, \frac{1}{W\sqrt{\eta}}}\,.
\end{equation}

\begin{theorem}[Improved local semicircle law]\label{lm:noprof}
Assume $d = 1$ and \eqref{decay of f}. Suppose moreover that
\begin{equation} \label{cond on N eta}
N \ll W^{5/4}\,, \qquad \eta \gg N^{2} / W^3\,.
\end{equation}
Then we have
\be\label{decc}
   \Lambda^2 \;\prec\; \Phi^2
\ee
for $z\in \f S$.
\end{theorem}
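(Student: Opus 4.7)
The strategy is to bootstrap the existing local semicircle law using the self-consistent equation for $T_{xy} = \sum_i s_{xi} |G_{iy}|^2$ advertised in \eqref{self-const intro}. My plan is to (i) derive and invert this $T$-equation to obtain $T_{xx} \prec \Phi^2$, (ii) feed this bound into the diagonal Schur complement identity to control $|G_{xx} - m|$, and (iii) deduce control of off-diagonal entries from the large-deviation estimate on a single row. The local semicircle law of \cite{EYY1} provides a weak a priori bound $\Lambda \prec (W\eta)^{-1/2}$, which is $\ll 1$ under the hypotheses $\eta \gg N^2/W^3$ and $N \geq W$; this linearizes the various resolvent expansions and in particular gives $\im G_{xx} \asymp 1$.

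Starting from the row resolvent identity $G_{iy} = G^{(x)}_{iy} + G_{xi}G_{xy}/G_{xx}$ (valid for $i,y \neq x$) together with $G_{xy} = -G_{xx}\sum_i h_{xi} G^{(x)}_{iy}$, one expands $|G_{iy}|^2$, multiplies by $s_{xi}$, and sums over $i$. Extracting the deterministic leading part yields $|m|^2 (ST)_{xy} + |m|^2 s_{xy}$, so the remainder $\cE_{xy}$ consists of (a) \emph{fluctuation terms} of the form $(1-P_x)[\sum_{i,j} h_{xi} A_{ij} h_{jx}]$ produced by replacing quadratic forms by their partial expectations, and (b) \emph{deterministic remainders} of size $O(\Lambda T) + O(\Lambda^3)$, which are subleading under the a priori bound. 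The pointwise bound on each quadratic-form fluctuation is only $\prec \Phi$, too weak to close the $T$-equation after inversion. The needed extra smallness comes from the Fluctuation Averaging Theorem of \cite{EKY2}: because $\cE_{xy}$ is in fact a local $s$-weighted average over several rows, the summation provides an additional factor that is a power of $W^{-1/2}$, giving $\|\cE\|_\infty \prec \Phi^2 \cdot o(1)$.

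One then inverts $(1 - |m|^2 S)\, T = |m|^2 S + \cE$. Using translation invariance, $|m|^2 = 1 - \alpha\eta + O(\eta^2)$, and the low-momentum expansion $\hat S(p) = 1 - W^2 p^2 D + O((Wp)^4)$, the symbol $1 - |m|^2 \hat S(p)$ is $\asymp \alpha\eta + W^2 p^2$ for $|p| \ll W^{-1}$ and bounded below by a positive constant for $|p| \gtrsim W^{-1}$ (cf.\ \eqref{Slow}). A direct evaluation of $T_{xx} = N^{-1}\sum_p \hat T(p)$, splitting the sum at $p^* \asymp \sqrt{\eta}/W$, gives $T_{xx} \asymp (W\sqrt{\eta})^{-1}$ when $\eta > (W/N)^2$ and $T_{xx} \asymp (N\eta)^{-1}$ otherwise; in both cases $T_{xx} \asymp \Phi^2$. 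The condition $\eta \gg N^2/W^3$ ensures precisely that the inverted error $\cE$ stays well below $\Phi^2$; since $T_{xy} \leq T_{xx}$ by positivity of the convolution kernel, we obtain $\|T\|_\infty \prec \Phi^2$.

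To transfer the $T$-bound to $\Lambda$, I apply the standard large-deviation estimate to the quadratic form $Z_x = \sum_{i,j} h_{xi} G^{(x)}_{ij} h_{jx} - \sum_i s_{xi} G^{(x)}_{ii}$ in the diagonal Schur identity $-1/G_{xx} = z + \sum_i s_{xi} G^{(x)}_{ii} + Z_x$, obtaining $|Z_x| \prec (\sum_{i,j} s_{xi} s_{xj} |G^{(x)}_{ij}|^2)^{1/2} \prec \Phi$ after a short perturbative replacement of $G^{(x)}$ by $G$ and using $\|T\|_\infty \prec \Phi^2$. Combined with \eqref{identity for msc} this yields $|G_{xx} - m| \prec \Phi$. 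For off-diagonals, $|G_{xy}| \leq |G_{xx}|\, |\sum_i h_{xi} G^{(x)}_{iy}| \prec (T^{(x)}_{xy})^{1/2} \prec \Phi$ follows from the first-order large-deviation bound. The main obstacle throughout is step (a) above: controlling $\cE$ by a naive pointwise estimate only delivers $\prec \Phi$, which is exactly the size of $\Lambda$ and hence useless for self-consistent improvement; only the hierarchical expansion of \cite{EKY2}---which unwraps the correlations among $|G_{iy}|^2$ for distinct $i$ within the same band row---delivers the extra $W^{-1/2}$ gain that closes the bootstrap.
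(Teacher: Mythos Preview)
Your overall architecture matches the paper's: derive the self-consistent equation for $T$, invert $1-|m|^2 S$, and pass back from $T$ to $\Lambda$ via large-deviation bounds on the Schur-complement quantities. The gap is in your treatment of the error term $\cE$ and, consequently, the missing \emph{iteration}. The Fluctuation Averaging Theorem does not give $\|\cE\|_\infty \prec \Phi^2\cdot o(1)$; it gives $\cE_{xy}\prec \Psi^4+\Psi^2 W^{-1/2}$, where $\Psi$ is whatever bound on $\Lambda$ you currently possess. Starting from the a priori input $\Psi_0=(W\eta)^{-1/2}$, after inversion the error is amplified by $\|\,(1-|m|^2 S)^{-1}\ol\Pi\,\|_\infty\asymp \big(\eta+(W/N)^2\big)^{-1}\le (N/W)^2$, and the resulting bound on $T$ (hence on $\Lambda^2$) is $\Phi^2+\frac{N^2}{W^2}\big(\Psi_0^4+\Psi_0^2 W^{-1/2}\big)$. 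This is generally \emph{not} $\le C\Phi^2$: for instance take $N=W^{5/4-\epsilon}$ and $\eta=(W/N)^2$; then the inverted error is $W^{-1/2-6\epsilon}$ while $\Phi^2=W^{-3/4-\epsilon}$, so one pass fails for small $\epsilon$.

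The paper closes this by iterating the implication
\[
\Lambda^2\prec\Psi_k^2 \;\Longrightarrow\; \Lambda^2\prec \Psi_{k+1}^2 \deq \Phi^2+\frac{N^2}{W^2}\big(\Psi_k^4+\Psi_k^2 W^{-1/2}\big),
\]
and the two hypotheses of the theorem are exactly the contraction conditions for this map: $\frac{N^2}{W^2}\Psi_0^2=\frac{N^2}{W^3\eta}\ll 1$ (this is $\eta\gg N^2/W^3$) and $\frac{N^2}{W^2}W^{-1/2}\ll 1$ (this is $N\ll W^{5/4}$). Under these, $\Psi_k^2\to\Phi^2$ after finitely many steps. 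Your writeup attributes the condition $\eta\gg N^2/W^3$ to keeping the one-shot inverted error below $\Phi^2$, which it does not, and gives no role to $N\ll W^{5/4}$. Add the iterative bootstrap and state the error in terms of the running $\Psi$; then your argument coincides with the paper's.
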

Clearly, the assumption $\eta \gg N^{2}/ W^3$ can be replaced with the stronger assumption $\eta \gg W^{-1/2}$. The assumption $N \ll W^{5/4}$ is technical; to see why it is needed, see \eqref{init} in the proof of Theorem \ref{lm:noprof} below. In the regime \eqref{cond on N eta}, Theorem \ref{lm:noprof} improves the earlier result 
\be\label{bandbound}
\Lambda^2 \;\prec\; \frac{1}{M\eta}
\ee
proved in \cite{EYY1} (see Lemma~\ref{lm:lsc} below).
In fact, the estimate \eqref{decc} is optimal, as may be seen from \eqref{supexpl} and the first estimate of \eqref{Tprec} below. By spectral decomposition of $G$ one easily finds that
$$
  \frac{1}{N^2}  \sum_{x,y} |G_{xy}|^2 \;=\; \frac{1}{N^2} \tr G^* G \;=\; \frac{1}{N\eta} \im \frac{\tr G}{N} 
   \;=\; \frac{\im m}{N\eta} + O_\prec \pbb{ \frac{\Lambda}{N\eta}}\,.
$$
Thus, in the regime where $\Lambda$ is bounded, the average of $\abs{G_{xy}}^2$ is of order $(N\eta)^{-1}$. Here we introduced the notation $G^*(z) \deq (G(z))^* = (H - \bar z)^{-1}$, which we shall use throughout the following.

\begin{remark}
The bound \eqref{decc} implies
 an estimate on the Stieltjes transform of the empirical spectral density, 
$m_N(z) \deq N^{-1} \tr G(z)$. Under the assumptions of Theorem \ref{lm:noprof} and the 
conditions \eqref{cond on N eta}, we have 
\be\label{m-mPhi}
m_N(z) - m(z) \prec \Phi^2
\ee
for $z\in  \f S$. Once $\Lambda\prec \Phi$ is established, \eqref{m-mPhi} easily follows from 
\be\label{simpleFA}
  \frac{1}{N} \sum_k Q_k G_{kk} \;\prec\; \Phi^2\,;
\ee
we leave the details to the reader. We remark that \eqref{simpleFA}
is the simplest form of the fluctuation averaging mechanism
(see Section~\ref{sec:Av}). A concise proof of  \eqref{simpleFA} can be 
found in \cite[Theorem 4.6]{EKYY4}.
\end{remark}

For $\eta \le (W/N)^2$ we have $\Phi^2= (N\eta)^{-1}$,
and the bound \eqref{decc} therefore shows that all off-diagonal entries of $G$ have a magnitude comparable with the average of their magnitudes.
We say that the resolvent is \emph{completely delocalized}.
Complete delocalization of the resolvent implies that the eigenvectors are
completely delocalized in a weak sense. The precise formulation is given 
in Proposition~\ref{lemma: simple deloc} below. 
By choosing $\eta$ such that $W^{-1/2}\le \eta\le (W/N)^2$
and invoking Proposition~\ref{lemma: simple deloc}
we obtain the following corollary.

\begin{corollary}[Eigenvector delocalization]\label{cor:noprof}
Assume $d = 1$ and \eqref{decay of f}. If  $ N \ll W^{5/4}$ then the eigenvectors of $H$ are completely delocalized in the sense of Proposition \ref{lemma: simple deloc} below.
\end{corollary}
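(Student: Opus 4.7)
The plan is to read off the corollary from Theorem~\ref{lm:noprof} by choosing the spectral parameter $\eta$ in a window where $\Phi^2$ collapses to $(N\eta)^{-1}$, and then feed the resulting resolvent bound into Proposition~\ref{lemma: simple deloc}. All three constraints on $\eta$ must hold simultaneously: $z = E + \ii\eta \in \f S$ (i.e.\ $\eta \geq M^{-1+\gamma}$), the Theorem~\ref{lm:noprof} hypothesis $\eta \gg N^2/W^3$, and $\eta \leq (W/N)^2$ so that $\Phi^2 = (N\eta)^{-1}$.

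First I would verify that the window $W^{-1/2} \leq \eta \leq (W/N)^2$ is nonempty under the assumption $N \ll W^{5/4}$. Indeed,
\begin{equation*}
\frac{(W/N)^2}{W^{-1/2}} \;=\; \frac{W^{5/2}}{N^2} \;\gg\; 1\,,
\end{equation*}
so such $\eta$ exist; pick one, for instance $\eta = W^{-1/2}$. Since $M \asymp W$ and $\gamma$ is a small constant, $\eta \geq M^{-1+\gamma}$ automatically; since $N \ll W^{5/4}$, we also have $N^2/W^3 \ll W^{-1/2} \leq \eta$, so the Theorem~\ref{lm:noprof} hypothesis is met. Finally, $\eta \leq (W/N)^2$ is equivalent to $N\eta \leq W\sqrt{\eta}$, hence $(N\eta)^{-1} \geq (W\sqrt{\eta})^{-1}$ and $\Phi^2 = (N\eta)^{-1}$. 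Theorem~\ref{lm:noprof} therefore delivers
\begin{equation*}
\max_{x,y} \, \absb{G_{xy}(z) - \delta_{xy}\, m(z)} \;\prec\; (N\eta)^{-1/2}
\end{equation*}
uniformly in $E \in [-2+\kappa, 2-\kappa]$, which is precisely the statement of complete delocalization of the resolvent referred to in the paragraph preceding the corollary.

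At this point I would invoke Proposition~\ref{lemma: simple deloc}, whose hypothesis is exactly a uniform control of $\Lambda(z)$ at some $\eta$ strictly inside $\f S$ by a matching power of $(N\eta)^{-1/2}$. Its output is the complete delocalization of every eigenvector of $H$ whose eigenvalue lies in the bulk $[-2+\kappa, 2-\kappa]$, which is what is claimed.

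I do not anticipate any conceptual obstacle: all the real work is done by Theorem~\ref{lm:noprof}, while Proposition~\ref{lemma: simple deloc} is a standard spectral-decomposition argument for $\im G_{xx}$. The only step requiring any care is the parameter bookkeeping, and the threshold $N \ll W^{5/4}$ is precisely what opens the window in which the three constraints on $\eta$ can be satisfied at once.
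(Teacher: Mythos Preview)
Your proposal is correct and follows exactly the paper's own argument: choose $\eta$ in the window $W^{-1/2}\le \eta\le (W/N)^2$ (nonempty precisely because $N\ll W^{5/4}$), so that Theorem~\ref{lm:noprof} yields $\Lambda^2\prec\Phi^2=(N\eta)^{-1}$, and then invoke Proposition~\ref{lemma: simple deloc}. Your parameter bookkeeping is sound; the only minor imprecision is that the output of Proposition~\ref{lemma: simple deloc} is that the \emph{fraction} of localized eigenvectors vanishes, not that literally every eigenvector is delocalized, but this is exactly the sense intended in the corollary.
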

This corollary improves the result in \cite{EK1, EK2}, where complete eigenvector delocalization (in a slightly weaker sense; see Remark \ref{rem: old result} below) was proved under the condition
$N \ll W^{7/6}$. It was observed in Section 11 of \cite{EK1} that the graphical perturbative renormalization scheme of \cite{EK1, EK2} faces a fundamental barrier at $N = W^{6/5}$. The reason for this barrier is that a large family of graphs whose contribution was subleading for $N \ll W^{6/5}$ in fact yield a leading-order contribution for $N \geq W^{6/5}$
if estimated individually. The cancellation mechanism among
these subleading graphs has so far not been identified.
As evidenced by Corollary \ref{cor:noprof}, our present  approach 
goes beyond this barrier.

\subsection{Diffusion profile}
In this section we assume $d = 1$ and \eqref{decay of f}.
In the previous section we saw that for $\eta\le (W/N)^2$ the profile of $|G_{xy}|^2$
 is essentially flat.
In the complementary regime, $\eta\ge (W/N)^2$, the 
 averaged resolvent $\E \abs{G_{xy}}^2$ is determined by a non-constant
deterministic profile given by the diffusion approximation
\be\label{Ydet}
\Tdet_{xy} \;\deq\; \pbb{\frac{\abs{m}^2 S}{1 - \abs{m}^2 S}}_{xy} \qquad (x,y \in \bb T)\,.
\ee
Note that the matrix $\Tdet = (\Tdet_{xy})$ solves the equation
\begin{equation*}
\Tdet \;=\; |m|^2 S \Tdet + |m|^2S\,,
\end{equation*}
which is obtained from \eqref{self-const intro} by dropping the error term $\cE$.
Clearly, $\Tdet_{xy}$ is translation invariant, i.e.\ $\Tdet_{xy}=\Tdet_{u0}$ with $u=[x-y]_N$.
Moreover,  $\Tdet_{xy} > 0$ for all $x,y$. Indeed, this follows immediately from the geometric series representation
\begin{equation} \label{random walk picture}
\Tdet_{xy} \;=\; \sum_{n \geq 1} \abs{m}^{2n} (S^{n})_{xy}\,,
\end{equation}
which converges by $|m|<1$ (see \eqref{m3} below)
and the trivial bound $0 \leq (S^n)_{xy} \leq 1$, as follows from \eqref{stoch}.

The representation \eqref{random walk picture} in fact provides the following interpretation of $\Tdet_{xy}$ in terms of random walks. 
From \eqref{m2} below we find that $\abs{m}^2 \approx \me^{-  \alpha \eta}$
(recall the definition of $\alpha$ from \eqref{def alpha}).
Thus the right-hand side of \eqref{random walk picture} may be approximately written as $\sum_{n \geq 1} \me^{-n \alpha \eta} (S^n)_{xy}$. By definition, $S$ is a doubly stochastic matrix -- the transition matrix of a random walk on $\bb T$ whose steps are of size $W$ and whose transition probabilities are given by $p(x \to y) = s_{xy}$. The normalized variance of each step is given by the \emph{unrenormalized diffusion constant}
\begin{equation} \label{def D}
D \;\equiv\; D_W \;\deq\; \frac{1}{2} \sum_{u \in \bb T} \pbb{\frac{u}{W}}^2 s_{u0}\,.
\end{equation}
(We normalize by $W^{-2}$ to account for the fact that the distribution $s_{u0}$ has variance $O(W^2)$.)
It is easy to see that
\begin{equation} \label{Dinfty}
D \;=\; D_\infty + O(W^{-1}) \qquad \text{where} \qquad D_\infty \;\deq\; \frac{1}{2} \int x^2 f(x) \, \dd x\,.
\end{equation}
We conclude that $\Tdet_{xy}$ is a superposition of random walks up to times of order $(\alpha \eta)^{-1}$. In this superposition the random walk with $n$ steps carries a weight $\abs{m}^{2n} \approx \me^{-n\alpha \eta}$, so that walks with times larger than $(\alpha \eta)^{-1}$ are strongly suppressed.
 The total weight of $\Tdet_{u0}$ is 
\begin{equation} \label{total mass 0}
\sum_u \Tdet_{u0} \;=\; \sum_{n \geq 1} \abs{m}^{2n} \;\approx\; (\alpha \eta)^{-1}\,;
\end{equation}
a precise computation is given in \eqref{total mass 3} below.

The following theorem shows that an averaged version of $|G_{xy}|^2$ 
is asymptotically given by $ \Tdet_{xy}$ with high probability. The averaging can be done in two ways. First, we can take the expectation $\E \abs{G_{xy}}^2$. In fact, taking \emph{partial expectation} $P_x \abs{G_{xy}}^2$ is enough; here $P_x$ denotes partial expectation in the randomness of the $x$-th row of $H$ (see Definition \ref{definition: P Q} below). Second, we can average in the index $x$ (or $y$ or both) on a scale of $W$; for simplicity we consider the weighted average
\be\label{def:T}
    T_{xy} \;\deq\; \sum_{i} s_{xi}|G_{iy}|^2\,.
\ee
Note that $T$ is not symmetric, but our results also hold for $T_{xy}$ replaced with the quantities $\sum_{j} s_{yj} \abs{G_{xj}}^2$ or $\sum_{i,j} s_{xi} s_{yj} \abs{G_{ij}}^2$.

\begin{theorem}[Diffusion profile]\label{lm:withprof}
Assume $d = 1$ and \eqref{decay of f}. Suppose that $ N \ll W^{5/4}$ and $(W/N)^2 \le \eta \le 1$.
Then
\be\label{Tprec}
 |T_{xy} -\Tdet_{xy}| \;\prec\; \frac{1}{N\eta}\,, \qquad
  \absB{P_x |G_{xy}|^2 - \delta_{xy}|m|^2- \abs{m}^2 \Tdet_{xy}} \;\prec\; \frac{1}{N\eta}
 + \frac{\delta_{xy}}{\sqrt{W}}\,.
\ee
In addition, we have the upper bounds
\be\label{Tfin}
  T_{xy} \;\prec\;  \Upsilon_{xy}  
\ee
and
\be\label{Tfin1}
\absb{G_{xy}-\delta_{xy} m}^2 \;\prec\;  \Upsilon_{xy}\,,
\ee
where we defined
\be\label{Tdetbound}
\Upsilon_{xy}\; \equiv \; \Upsilon_{xy}^{(K)} \;\deq\; \frac{1}{N\eta} +\frac{1}{W\sqrt{\eta}}
\exp \qbb{ -\frac{\sqrt{\al\eta}}{W\sqrt{D}}|x-y|_N} 
   +\frac{1}{W} \avgbb{ \frac{\sqrt{\eta}|x-y|_N}{W}}^{-K}\,.
\ee
Here  $K$ is an arbitrary, fixed, positive integer.
All estimates are uniform in $z\in \f S$ and $x,y \in \bb T$.
\end{theorem}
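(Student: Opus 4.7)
The plan is to invert the self-consistent equation $T = |m|^2 S T + |m|^2 S + \mathcal{E}$ of Theorem~\ref{thm:T}. Since $|m|^2 < 1$ on $\f S$, the operator $1 - |m|^2 S$ is invertible, and we obtain $T - \Theta = (1 - |m|^2 S)^{-1}\mathcal{E}$. The proof therefore reduces to two tasks: (a) derive sharp pointwise bounds on the deterministic kernel $\Theta = (1 - |m|^2 S)^{-1}|m|^2 S$, together with operator bounds for $(1 - |m|^2 S)^{-1}$; and (b) bound the entries of the random error $\mathcal{E}$ using fluctuation averaging, with the a priori inputs provided by Theorem~\ref{lm:noprof}.

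For (a) I pass to the Fourier side using translation invariance of $S$. For $|p| \ll W^{-1}$ the expansion \eqref{low-p exp} gives $\widehat S(p) = 1 - W^2 Dp^2 + O(W^4 p^4)$, while $|m|^2 = 1 - \alpha\eta + O(\eta^2)$, so that $1 - |m|^2\widehat S(p) \approx \alpha\eta + W^2 D p^2$. Inverse Fourier transform on $\bb T$ then produces three contributions: the $p = 0$ zero mode yields the flat piece $(N\alpha\eta)^{-1}$; small non-zero $p$ give the heat kernel $(W\sqrt\eta)^{-1}\exp[-\sqrt{\alpha\eta}\,|x-y|_N/(W\sqrt D)]$; and high-$p$ modes, where the Taylor expansion breaks down, produce the polynomial tail $W^{-1}\avgB{\sqrt\eta\,|x-y|_N/W}^{-K}$ with $K$ arbitrary thanks to the smoothness \eqref{decay of f} of $f$. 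Together these yield the pointwise bound $\Theta_{xy} \lesssim \Upsilon_{xy}$.

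For (b), the entries of $\mathcal{E}$ decompose (from the derivation of the self-consistent equation for $T$) into two kinds of terms: (i) local terms of schematic form $(G_{ii} - m) \cdot T_{\star\star}$, controlled by $|G_{ii} - m| \prec \Phi$ via Theorem~\ref{lm:noprof} combined with the a priori bound on $T$; and (ii) genuinely non-local fluctuation averages of the type $\sum_i s_{xi}\, Q_i\big[|G_{iy}|^2\big]$, with $Q_i$ denoting centring in the randomness of the $i$-th row. The naive estimate on (ii) is $\Phi^2$, but the Fluctuation Averaging Theorem of \cite{EKY2} produces a decisive additional gain; convolving the improved bound on $\mathcal{E}$ with the operator $(1 - |m|^2 S)^{-1}$ (whose $\ell^\infty \to \ell^\infty$ norm is of order $(\alpha\eta)^{-1}$) yields $|T_{xy} - \Theta_{xy}| \prec (N\eta)^{-1}$ throughout the regime $\eta \ge (W/N)^2$. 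The upper bound \eqref{Tfin} then follows by combining with $\Theta_{xy} \lesssim \Upsilon_{xy}$.

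The pointwise bound \eqref{Tfin1} and the partial-expectation estimate \eqref{Tprec} follow from the resolvent expansion $G_{xy} = -m\sum_i^{(x)} h_{xi}\, G^{(x)}_{iy} + (\text{corrections})$ for $x \ne y$: a standard large-deviation bound on this sum gives $|G_{xy}|^2 \prec T^{(x)}_{xy}$, and minor stability of $T$ (a consequence of Theorem~\ref{lm:noprof}) transfers this to $T_{xy} \prec \Upsilon_{xy}$; taking $P_x$ on the same expansion leaves precisely $|m|^2\sum_i s_{xi}|G^{(x)}_{iy}|^2 \approx |m|^2\Theta_{xy}$, while the $\delta_{xy}/\sqrt W$ correction originates from the individual entry $h_{xx}$, whose variance gives $\sqrt{s_{xx}} \asymp W^{-1/2}$. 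The principal obstacle throughout is step (b): because the variables $|G_{xy}|^2$ for distinct $x$ are strongly correlated (unlike the diagonal entries $G_{xx}$), the required gain in the fluctuation average is inaccessible to elementary concentration arguments and must be extracted by the subtle expansion of \cite{EKY2}, which uncovers cancellations between diagonal and off-diagonal resolvent contributions that are absent in the simpler averaging of $G_{xx}$.
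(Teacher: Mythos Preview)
Your proposal is correct and follows essentially the same route as the paper: feed the improved input $\Lambda \prec \Phi$ from Theorem~\ref{lm:noprof} into the self-consistent equation of Theorem~\ref{thm:T}, invert $1-|m|^2 S$ with $\ell^\infty$-norm $\asymp \eta^{-1}$ to reach $T-\Theta \prec (N\eta)^{-1}$, read off $\Theta \leq C\Upsilon$ via Fourier analysis (Proposition~\ref{prop:profile}), and then transfer to pointwise bounds on $G_{xy}$ by the large-deviation argument of Lemma~\ref{lm:contr} and to $P_x|G_{xy}|^2$ via Lemma~\ref{prop:Ess}. The only cosmetic difference is that the paper first projects out the zero Fourier mode $\f e$ before inverting (Proposition~\ref{YleqT}), but in the profile regime $\eta \geq (W/N)^2$ this gains nothing over your direct inversion.
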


Note that the total mass of the distribution $\abs{G_{x0}}^2$ may be computed explicitly by spectral decomposition of $G$: assuming $\Lambda \prec \Psi$ we have
\begin{equation} \label{total mass 2}
\sum_{x} T_{x0} \;=\; \sum_x \abs{G_{x0}}^2 \;=\;  \frac{\im G_{00}}{\eta} \;=\; \frac{\im m}{\eta} (1 + O_\prec(\Psi))\,,
\end{equation}
in agreement with the corresponding statement \eqref{total mass 0} for the deterministic limiting profile.

\begin{remark} We expect that \eqref{Tprec} should in fact hold under the weaker conditions $\eta\gg \frac{1}{N}$ and $N \ll W^2$. The improved local semicircle law \eqref{decc} should also hold under these weaker conditions.
In particular, this would imply complete delocalization of the eigenvectors for all $N \ll W^2$. 
One  obstacle is that a non-trivial control on $\Lambda$ in the regime $\eta \le \frac{1}{W}$ is difficult to obtain.
\end{remark}

\begin{remark} \label{rem: old result}
In \cite{EK1, EK2} a diffusion approximation was proved
for $\E \absb{(\me^{-\ii tH})_{xy}}^2$  up to times $t\ll W^{1/3}$; this result was established only in a \emph{weak sense}, i.e.\ by integrating against a test function in $x-y$, living on the diffusive scale $W t^{1/2}$.
  The  formula
\begin{equation} \label{res unit}
  \frac{1}{H-E-\ii \eta} \;=\; \ii \int_0^\infty \me^{-\ii t(H-E-\ii\eta)} \, \dd t
\end{equation}
relates the resolvent with the unitary time evolution. Notice that
 the time integration can be truncated at $t\le t_0$ with $t_0$
slightly larger than $\eta^{-1}$. Hence, controlling the resolvent whose spectral parameter has imaginary part greater than $\eta$ is basically equivalent to controlling the unitary time evolution up to time $t = \eta^{-1}$. Although it was not
explicitly worked out in \cite{EK1, EK2}, the control
on $\me^{-\ii tH}$ up to $t\ll W^{1/3}$ allows one to control the
resolvent for $\eta \gg W^{-1/3}$. Theorem \ref{lm:withprof} 
(combined with Theorem \ref{lm:noprof})
is thus stronger than the results of \cite{EK1, EK2} in the following three senses.
\begin{enumerate}
\item
The resolvent is controlled for $\eta \geq W^{-1/2}$ (instead of $\eta \gg W^{-1/3}$).
\item
The control on the profile is pointwise in $x$ and $y$ (instead of in a weak sense on the scale $W \eta^{-1/2}$).
\item
The estimates hold with high probability (instead of in expectation).
\end{enumerate}
However, the result in the current paper is not uniform in $N$, unlike that of \cite{EK1, EK2}.
\end{remark}

We conclude this section with an asymptotic result on the deterministic profile $\Tdet_{x0}$. Since we are interested in large values of $x$, we need to consider the small-momentum behaviour of the Fourier transform of $\Tdet_{x0}$. Using the small-$p$ expansion \eqref{low-p exp} and \eqref{approx T hat}, we therefore find that $\Tdet_{x0} \approx \Pdet_x$, where we defined the $N$-periodic function
\begin{equation} \label{def fra P}
\Pdet_x \;\deq\; \frac{\abs{m}^2}{N}\sum_{p \in \frac{2\pi}{ N} \bZ }  \me^{\ii px} \frac{1}{\alpha \eta + W^2 D p^2} \;=\; \frac{|m|^2}{2W\sqrt{D\al\eta}} \,  \sum_{k\in \bZ}
 \exp \qbb{- \frac{\sqrt{\al\eta}}{W \sqrt{D}} \, \absb{x+kN}}\,;
\end{equation}
here the second equality follows by Poisson summation and the  Fourier transform $\int\me^{\ii px} (1 + p^2)^{-1}\, \dd p = \pi \me^{-\abs{x}}$. 
 The following proposition, proved in Appendix \ref{sec:prof}, gives the precise statement.

\begin{proposition}[Deterministic diffusion profile]\label{prop:profile}
Assume $d = 1$ and \eqref{decay of f}. For each $K \in \N$ we have
\begin{equation}
\label{prof1}
\Tdet_{xy} \;=\;
\Pdet_{x - y}
+
 O\pbb{\frac{1}{W^2}}
+O_K \pBB{\frac{1}{W} \avgbb{\frac{\sqrt{\eta}\,|x-y|_N}{W}}^{-K}}
\end{equation}
uniformly for $x$, $y$, and $z \in \f S$ with $\eta \ll 1$.

In particular,
\begin{equation} \label{supexpl}
\max_{x,y} \Tdet_{xy} \;\asymp\; \Phi^2\,.
\end{equation}
Moreover, if $(W/N)^2\le\eta\le 1$ and $N \leq W^2$, we have the sharp upper bound $\Tdet_{xy} \le C \Upsilon_{xy}$.
\end{proposition}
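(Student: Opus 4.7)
The plan is to exploit translation invariance of $\Tdet$ by working in Fourier space throughout. Diagonalizing $S$ via the discrete Fourier transform on $\bb T$, I would write
\begin{equation*}
\Tdet_{x0} \;=\; \frac{1}{N} \sum_{p \in \frac{2\pi}{N}\Z \cap [-\pi,\pi)} \me^{\ii p x}\, \frac{\abs{m}^2 \wh S(p)}{1 - \abs{m}^2 \wh S(p)}\,,
\end{equation*}
and read off $\Pdet_x$ analogously from its definition \eqref{def fra P}. Analyzing the symbol: since $s_{u0}$ is a sample of the smooth rapidly decaying density $W^{-1} f(\,\sdot\,/W)$ (up to a normalization $Z_{L,W} = W + O(W^{-K})$ coming from Poisson summation), one has $\wh S(p) = \wh f(Wp) + O(W^{-K})$ for all $K$. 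Taylor expanding the symmetric, entire function $\wh f$ yields $\wh S(p) = 1 - W^2 D p^2 + O((Wp)^4)$ for $\abs{Wp} \le 1$, while smoothness of $f$ combined with \eqref{decay of f} produces the rapid decay $\wh S(p) = O_K((Wp)^{-K})$ for $\abs{Wp} \ge 1$. Together with \eqref{Slow} and $\abs{m}^2 = 1 - \alpha\eta + O(\eta^2)$ from \eqref{m2}, this gives the uniform lower bound $\absb{1 - \abs{m}^2 \wh S(p)} \gtrsim \alpha \eta + \min\{W^2 D p^2,1\}$ throughout the Brillouin zone, which drives all subsequent denominator estimates.

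Next I would split the momentum sum at $\abs{p} \asymp W^{-1}$. In the low regime $\abs{p} \le W^{-1}$, the difference between the two multipliers equals $\abs{m}^2$ times
\begin{equation*}
\frac{\wh S(p)(\alpha\eta + W^2 D p^2) - (1 - \abs{m}^2 \wh S(p))}{(1 - \abs{m}^2\wh S(p))(\alpha\eta + W^2 D p^2)} \;=\; O\pbb{\frac{(Wp)^4 + \eta^2}{(\alpha\eta + W^2 D p^2)^2}}\,,
\end{equation*}
which, after summing over $p$ at spacing $2\pi/N$ and dividing by $N$, contributes $O(W^{-2})$. In the high regime $\abs{p} > W^{-1}$ both multipliers are $O(1)$ and contribute $O(W^{-1})$ pointwise; to upgrade this to the polynomial prefactor $\avg{\sqrt\eta\abs{x-y}_N/W}^{-K}$ I would perform $K$ iterated summations by parts in $p$, each gaining a factor $\abs{x-y}_N^{-1}$ at the cost of a discrete derivative of the symbol, whose $p$-smoothness is dictated by the size of the denominator. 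Collecting the two regimes yields \eqref{prof1}.

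The remaining two assertions follow readily. For $\max_{x,y} \Tdet_{xy} \asymp \Phi^2$, positivity forces the maximum at $x = y$, and $\Pdet_0$ is analysed directly from \eqref{def fra P}: when $\eta \le (W/N)^2$ the $p = 0$ term dominates and $\Pdet_0 \asymp 1/(N\eta)$, while when $\eta \ge (W/N)^2$ the sum is well approximated by its integral $\int \dd p/(\alpha\eta + W^2 D p^2) \asymp 1/(W\sqrt\eta)$; both cases match $\Phi^2$, and the errors in \eqref{prof1} are under control. For the sharp bound $\Tdet_{xy} \le C\Upsilon_{xy}$, use the already Poisson-summed form in \eqref{def fra P}: the unique $k = k_0$ with $x - y + k_0 N = [x-y]_N$ produces the exponential envelope with exponent $\sqrt{\alpha\eta}/(W\sqrt D)$, and under $\eta \ge (W/N)^2$ the remaining terms are smaller by $\me^{-c\sqrt{\eta}N/W} = O(1)$ factors that are absorbed into the first two summands of $\Upsilon_{xy}$; the third summand absorbs the error terms from \eqref{prof1}.

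The hardest part will be the summation-by-parts argument in the high-momentum regime. One must verify that the \emph{effective} smoothness scale of the symbol $\abs{m}^2 \wh S(p)/(1 - \abs{m}^2 \wh S(p))$ in $p$ is $\sqrt\eta/W$ rather than the naive $W^{-1}$, because the denominator is $\asymp W^2 p^2$ for $\sqrt\eta/W \ll \abs{p} \ll W^{-1}$ rather than $\asymp 1$. Each $p$-derivative therefore costs a factor $\max\{\abs{p}^{-1},W\}$; matching this against the gain $\abs{x-y}_N^{-1}$ per integration by parts is what produces the Japanese bracket in the argument $\sqrt\eta\abs{x-y}_N/W$ of \eqref{Tdetbound} rather than $\abs{x-y}_N/W$.
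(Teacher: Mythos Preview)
Your overall strategy---Fourier diagonalization, a low/high momentum split, and summation by parts---is the same as the paper's, and your final paragraph correctly identifies the crucial smoothness scale $\sqrt\eta/W$. However, the two error terms in \eqref{prof1} are mislocated in your argument, and one of your claimed bounds is off by a factor of $W$.

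Your assertion that the low-momentum difference sums to $O(W^{-2})$ is incorrect. In the variable $q=Wp$ your bound reads
\begin{equation*}
\frac{1}{W}\int_{-1}^{1}\frac{q^4+\eta^2}{(\alpha\eta+Dq^2)^2}\,\dd q\,,
\end{equation*}
and the $q^4$ part of the integrand is $\asymp 1$ on the range $\sqrt\eta\le|q|\le 1$, so the integral is $O(1)$ and the contribution is only $O(W^{-1})$. The $x$-decay must therefore come from summation by parts \emph{in the low regime}: after rescaling to $r=q/\sqrt\eta$ the difference becomes a function whose $r$-derivatives are uniformly bounded---this is exactly the smoothness observation from your last paragraph, which belongs here rather than in the high regime (note that the range $\sqrt\eta/W\ll|p|\ll W^{-1}$ you discuss there lies inside your own low regime). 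Each step gains $W/(\sqrt\eta\,|x|)$, producing the $O_K\bigl(W^{-1}\langle\sqrt\eta\,|x|/W\rangle^{-K}\bigr)$ term. The $O(W^{-2})$ in \eqref{prof1} has a different source altogether: $\Pdet$ in \eqref{def fra P} is summed over the full infinite lattice $\frac{2\pi}{N}\Z$, while $\Tdet$'s Fourier representation lives only on the Brillouin zone; the tail $|p|\ge\pi$ of $\Pdet$'s sum contributes $O\bigl(W^{-1}\int_{\pi W}^\infty q^{-2}\,\dd q\bigr)=O(W^{-2})$. In the high regime $|q|\ge 1$ both symbols are smooth at scale $|q|\asymp 1$, so summation by parts yields the stronger $O_K(W^{-1}\langle|x|/W\rangle^{-K})$, which is absorbed into the low-regime error since $\sqrt\eta\le 1$. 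Your treatments of \eqref{supexpl} and of $\Tdet_{xy}\le C\Upsilon_{xy}$ are fine.
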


\begin{remark} \label{rem:prof1}
The leading-order behaviour of \eqref{prof1} is given by \eqref{def fra P}. If $\eta \ll (W/N)^2$ then $\Pdet$ is essentially a constant, i.e.\ the profile is flat.
Conversely, if $\eta \gg (W/N)^2$ then the leading term on the right-hand side of \eqref{def fra P} is given by the term $k=0$ (by periodicity of $\Pdet$ we assume that $x \in \bb T$). This is an exponentially decaying profile on the scale $\abs{x} \sim W \eta^{-1/2}$. The shape of the profile is therefore nontrivial if and only if $\eta \gg (W/N)^2$. Note that in both of the above regimes the error terms in \eqref{prof1} are negligible compared with the main term.

The total mass of the profile $\sum_{x \in \bb T} \Pdet_x$ is given by $N$ times the term $p = 0$ in the first sum of \eqref{def fra P}: 
\begin{equation} \label{total mass 1}
\sum_{x \in \bb T} \Pdet_x \;=\; \frac{\abs{m}^2}{\alpha \eta} \;=\; \frac{\im m}{\eta} \pb{1 + O(\eta)}\,,
\end{equation}
where in the last step we used the elementary identities \eqref{id m im m} and \eqref{m2} below.  In fact, the calculation \eqref{total mass 1} is a mere consistency check (to leading order) since $\sum_x \Tdet_{x0} = \frac{\im m}{\eta}$; see \eqref{total mass 3} below. We conclude that the average height of the profile is of order $(N \eta)^{-1}$. The peak of the exponential profile has height of order $(W \sqrt{\eta})^{-1}$, which dominates over the average height if and only if $\eta \gg (W/N)^2$. The regime $\eta \gg (W/N)^2$ corresponds to the regime where $\eta$ is sufficiently large that the complete delocalization has not taken place, and
the profile is mostly concentrated in the region
$|x-y| \leq W\eta^{-1/2} \ll N$.

These scenarios are best understood in a dynamical picture in which $\eta$ is decreased down from $1$.  The ensuing dynamics of $\Pdet$ corresponds to the diffusion approximation, where the
quantum problem is replaced with a random walk of step-size of order $W$.
On a configuration space consisting of $N$ sites, such a random walk
will reach an equilibrium beyond time scales $(N/W)^2$. As observed in Remark \ref{rem: old result}, $\eta^{-1}$ plays the role of time $t$, so that in this dynamical picture equilibrium is reached for $t \sim \eta^{-1} \gg (N/W)^2$. Figure \ref{figure: profile} illustrates this diffusive spreading of the profile for
different values of $\eta$.
\end{remark}

\begin{figure}[ht!]
\begin{center}
\includegraphics{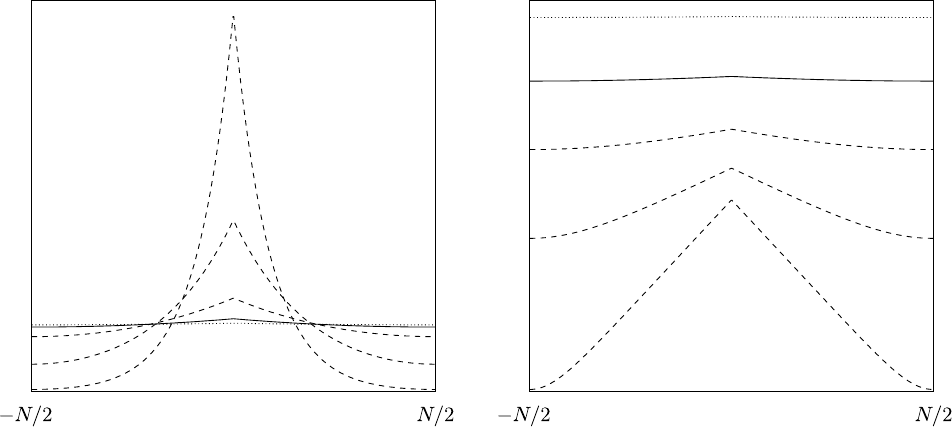}
\end{center}
\caption{A plot of the diffusion profile function at five different values of $\eta$, where the argument $x$ ranges over the torus $\bb T$. Left: the graph $x \mapsto \eta \Pdet_x$ (see \eqref{total mass 1} for the choice of normalization). Right: the graph $x \mapsto \log \Pdet_x$. Here we chose $N = 25 W$ and $\eta = 5^{-k}$ for $k = 1,2,3,4,5$. The cases $k = 1,2,3$ (where $\eta > (W/N)^2$) are drawn using dashed lines, the case $k = 4$ (where $\eta = (W/N)^2$) using solid lines, and the case $k = 5$ (where $\eta < (W/N)^2$) using dotted lines.
\label{figure: profile}}
\end{figure}

\begin{remark}
The $W$-dependent quantity $D$ in the definition \eqref{def fra P} may be replaced with the constant $D_\infty$ on the right-hand side of \eqref{def fra P}, at the expense of a multiplicative error $(1 + O(W^{-1} \log W))$ and an additive error $O(W^{-2})$. Indeed, the replacement $D \mapsto D_\infty$ in the prefactor is trivial by \eqref{Dinfty}. In order to estimate the error arising from the replacement $D \mapsto D_\infty$ in the exponent, we use the estimate $\me^{-\xi (1 + O(1/W))} = \me^{-\xi}(1 + O(x/W))$ with $\xi \sim \frac{\sqrt{\eta}}{W} \abs{x - y + kN}$. If $\xi \leq C_0 \log W$ then $x/W \leq C_0 W^{-1} \log W$, which is small enough. On the other hand, if $\xi \geq C_0 \log W$ then $\me^{-\xi} \leq W^{-C_0}$, so that the resulting error is an additive error $O(W^{-2})$.
\end{remark}

\subsection{Delocalization with a small mean-field component}
In this section we continue to assume $d = 1$ and \eqref{decay of f}.
We now consider a related model
\be\label{He}
   H_\e \;=\; (1-\e)^{1/2}H + \sqrt{\e}U\,,
\ee
where $H$ is the band matrix from Section \ref{sec:setup}, $U = (u_{ij})$ is a standard Wigner matrix independent of $H$, and 
$\e\le \frac{1}{2}$
is a small parameter. We assume that $U$ has the same symmetry type as $H$, i.e.\ either \eqref{RS} or \eqref{CH}.
Its matrix entries are normalized such that $\E u_{ij}=0$ and $\E \abs{u_{ij}}^2 = \frac{1}{N}$. Moreover, in analogy to \eqref{finite moments}, we make the technical assumption that for each $p$ there exists a constant $\mu_p$ such that $\E \abs{N^{1/2} u_{ij}}^p \leq \mu_p$ for all $N$, $i$, and $j$.

Let $S_\e = (s^{(\e)}_{ij})$ denote the
matrix of variances of the entries of $H_\e = (h_{ij}^{(\epsilon)})$, i.e. $s^{(\e)}_{ij} \deq \E \absb{h_{ij}^{(\e)}}^2$.
We find
$$
  S_\e = (1-\e) S + \epsilon \f e \f e^*\,,
$$
where we introduced the vector $\f e \deq N^{-1/2}(1,1, \dots, 1)^T$. (Hence $\f e \f e^*$ is the
matrix of the variances of $U$.)
Clearly, $0\le S_\e \le 1$ and $S_\e$ is a symmetric stochastic matrix
satisfying \eqref{stoch} and \eqref{s leq W}.

The effect of adding a small Wigner component of size $\e$ is
that the imaginary part of the spectral parameter effectively increases from $\eta$ to $\eta +\e$ in
the local semicircle law and in the diffusion approximation. 
In particular, we can eliminate the condition $N \le W^{5/4}$ and
still obtain delocalization for $H_\e$ provided $\e$ is not too small.
These results are summarized in the following theorem. In order to state it, we introduce the control parameter
\begin{equation} \label{decce}
\Phi_\e^2 \;\deq\; \max \hbb{ \frac{1}{N\eta}, \frac{1}{W\sqrt{\e+\eta}}}\,,
\end{equation}
which is analogous to $\Phi$ defined in \eqref{def Phi}.

\begin{theorem}[Delocalization with small mean-field component]\label{lm:noprof1} 
Assume $d = 1$ and \eqref{decay of f}. The following estimates hold uniformly for  $z\in \f S$.
\begin{enumerate}
\item
Suppose that $\eta (\eta + \epsilon) \gg W^{-1}$. Moreover, suppose that $N \ll W^{5/4}$ or $\eta + \epsilon \gg W^{-1/2}$. Then
\be\label{decc2}
   \Lambda^2 \;\prec\; \Phi_\epsilon^2\,.
\ee
\item
Suppose that $\epsilon + \eta \gg W^{-1/2}$ and
\begin{equation}
\frac{1}{W(\epsilon + \eta)} \;\ll\; \eta \;\leq\; \frac{W\sqrt{\epsilon + \eta}}{N}\,.
\end{equation}
Then the resolvent is completely delocalized:
\begin{equation*}
\Lambda^2 \;\prec\; \frac{1}{N\eta}\,.
\end{equation*}
\item
If $\e \gg (N/W^2)^{2/3}$ then the eigenvectors of $H_\epsilon$ are completely delocalized in the sense of Proposition~\ref{lemma: simple deloc}.
\end{enumerate}
\end{theorem}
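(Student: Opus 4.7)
The plan is to run the arguments of Theorems~\ref{lm:noprof} and~\ref{lm:withprof} with the matrix of variances $S$ replaced throughout by $S_\epsilon = (1-\epsilon)S + \epsilon\f e\f e^*$. The crucial structural observation is that $\f e$ is a common eigenvector of $S$ and $\f e\f e^*$ with eigenvalue one, so the rank-one perturbation commutes with the orthogonal decomposition $\bR^N = \bR\f e \oplus \f e^\perp$, and on $\f e^\perp$ one simply has $S_\epsilon = (1-\epsilon) S$. Consequently the stability operator $1 - |m|^2 S_\epsilon$ reduces to the scalar $1 - |m|^2 \asymp \alpha\eta$ on $\bR\f e$, while on $\f e^\perp$ it is bounded below by $\alpha\eta + \epsilon(1 - O(\eta)) + W^2 D p^2$ in the low-momentum regime and by a positive constant at high momenta by~\eqref{Slow}. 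This realises precisely the substitution $\eta \mapsto \eta+\epsilon$ in the banded component of the deterministic profile and accounts for the form of $\Phi_\epsilon^2$ in~\eqref{decce}.

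For part~(i), I would set up and analyse the self-consistent equation $T = |m|^2 S_\epsilon T + |m|^2 S_\epsilon + \cE$ for $H_\epsilon$. Since $\max_{ij} s^{(\epsilon)}_{ij} \lesssim M^{-1}$ and all matrix entries of $H_\epsilon$ remain independent, centred, and of uniformly bounded moments, the Fluctuation Averaging Theorem of~\cite{EKY2} applies verbatim and delivers an error $\cE$ of the same order as in the proof of Theorem~\ref{lm:noprof}. Inverting the stability operator as above, the deterministic profile $\Theta^{(\epsilon)} \deq |m|^2 S_\epsilon / (1 - |m|^2 S_\epsilon)$ is the sum of a constant rank-one contribution of size $\asymp (N\eta)^{-1}$ and a banded contribution that satisfies the analogue of Proposition~\ref{prop:profile} with $\eta$ replaced by $\eta+\epsilon$; its maximum is comparable with $\Phi_\epsilon^2$. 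The hypothesis $\eta(\eta+\epsilon) \gg W^{-1}$ is exactly what is required to force $\Phi_\epsilon \ll 1$, which closes the bootstrap from the a~priori estimate~\eqref{bandbound} (which remains valid for $H_\epsilon$ because $M(H_\epsilon) \asymp W$). The dichotomy ``$N \ll W^{5/4}$ or $\eta + \epsilon \gg W^{-1/2}$'' reflects the two independent mechanisms through which the short-distance behaviour of $T_{xy}$ can be controlled, exactly as in Theorem~\ref{lm:noprof}.

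Part~(ii) is an immediate algebraic consequence of part~(i): the upper bound $\eta \leq W\sqrt{\epsilon+\eta}/N$ is equivalent to $(N\eta)^{-1} \geq (W\sqrt{\eta+\epsilon})^{-1}$, so the first term of $\Phi_\epsilon^2$ dominates and one obtains the complete delocalization bound $\Lambda^2 \prec (N\eta)^{-1}$.

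For part~(iii) I would exhibit an admissible $\eta$ at which part~(ii) applies and then invoke Proposition~\ref{lemma: simple deloc}. Choosing $\eta \asymp W\sqrt{\epsilon}/N$ at the top of the allowed interval, the lower constraint $1/(W(\epsilon+\eta)) \ll \eta$ reduces to $\epsilon^{3/2} \gg N/W^2$, i.e.\ $\epsilon \gg (N/W^2)^{2/3}$, while $\epsilon + \eta \gg W^{-1/2}$ is also implied by the same hypothesis in the regime $N \gtrsim W^{5/4}$; the complementary regime $N \ll W^{5/4}$ is already handled by Corollary~\ref{cor:noprof}. The main technical obstacle is to confirm that the fluctuation-averaging machinery of~\cite{EKY2}, originally designed for banded variance matrices, continues to yield an error $\cE$ of the desired order in the presence of the rank-one perturbation; this should be benign since the extra Wigner entries have variance $1/N \leq M^{-1}$, so the diagrammatic moment bounds carry through without modification.
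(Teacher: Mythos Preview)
Your proposal is correct and follows essentially the same route as the paper: the key observation that $S_\epsilon \ol\Pi = (1-\epsilon) S \ol\Pi$ yields the improved stability estimate~\eqref{2to21}, after which the iteration of Section~\ref{sec:noprof} closes under the conditions~\eqref{assump for iter eps}, which reduce precisely to the hypotheses of part~(i).

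Two small clarifications. First, the role of $\eta(\eta+\epsilon) \gg W^{-1}$ is not to make $\Phi_\epsilon$ admissible but to guarantee the first iteration condition in~\eqref{assump for iter eps}, namely $\Psi_0^2 \ll \eta + \epsilon + (W/N)^2$ with the a~priori input $\Psi_0^2 = (W\eta)^{-1}$; the dichotomy ``$N \ll W^{5/4}$ or $\eta+\epsilon \gg W^{-1/2}$'' is exactly the second condition there. Second, in part~(iii) you should not invoke Corollary~\ref{cor:noprof}, which concerns $H$ rather than $H_\epsilon$; instead note that part~(ii) itself applies in the regime $N \ll W^{5/4}$ by choosing any $\eta$ in the then nonempty range $W^{-1/2} \ll \eta \leq (W/N)^2$, regardless of the size of $\epsilon$.
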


This theorem  formulates only the bounds concerning delocalization,
i.e.\ the counterparts of Theorem~\ref{lm:noprof} and Corollary~\ref{cor:noprof}.
Similarly to Theorem~\ref{lm:withprof}, a non-trivial profile can be proved
for the average of $|G_{xy}|^2$. The profile is visible in the regime $N\eta\ge W\sqrt{\e+\eta}$,
and it is given by
\be\label{Ydete}
\Tdet^{(\e)}_{xy} \;\deq\; \pbb{\frac{\abs{m}^2 S_\e}{1 - \abs{m}^2 S_\e}}_{xy}
\;\approx\;
\frac{|m|^2 (1 - \epsilon)}{W \sqrt{D \pb{(1 - \epsilon) \alpha \eta + \epsilon}}}
\exp \qbb{ -\frac{\sqrt{(1 - \epsilon) \alpha \eta + \epsilon}}{W\sqrt{D}}|x-y|}\,,
\ee
where the approximation is valid in the regime $|x-y|\ll N$. The details of the precise formulation
and the proof are left to the reader.

\section{Preliminaries} \label{sec:prelim}
In this subsection we introduce some further notations and collect some basic facts that will be used throughout the paper. Throughout this section we work in the general $d$-dimensional setting of Section \ref{sec:setup}.

\begin{definition}[Minors] \label{def: minors}
For $ T \subset \{1, \dots, N\}$ we define $H^{( T)}$ by
\begin{equation*}
(H^{( T)})_{ij} \;\deq\; \ind{i \notin  T} \ind{j \notin  T} h_{ij}\,.
\end{equation*}
Moreover, we define the resolvent of $H^{( T)}$ through
\begin{equation*}
G^{( T)}_{ij}(z) \;\deq\;  (H^{( T)} - z)^{-1}_{ij}\,.
\end{equation*}
We also set
\begin{equation*}
\sum_i^{( T)} \;\deq\; \sum_{i \col i \notin  T}\,.
\end{equation*}
When $ T = \{a\}$, we abbreviate $(\{a\})$ by $(a)$ in the above definitions; similarly, we write $(ab)$ instead of $(\{a,b\})$.
Unless specified otherwise, summations are over the set $\{1, 2, \ldots, N\}$.
Recalling the identification of $\bb T_L^d$ with  $\{1, 2, \ldots, N\}$,
this convention is in agreement with the one given after \eqref{stoch}.
\end{definition}

\begin{definition}[Partial expectation and independence] \label{definition: P Q}
Let $X \equiv X(H)$ be a random variable. For $i \in \{1, \dots, N\}$ define the operations $P_i$ and $Q_i$ through
\begin{equation*}
P_i X \;\deq\; \E(X | H^{(i)}) \,, \qquad Q_i X \;\deq\; X - P_i X\,.
\end{equation*}
We call $P_i$ \emph{partial expectation} in the index $i$.
Moreover, we say that $X$ is \emph{independent of $T \subset \{1, \dots, N\}$} if $X = P_i X$ for all $i \in T$.
\end{definition}

The following lemma collects basic algebraic properties of stochastic domination $\prec$. We shall use it tacitly throughout Sections \ref{sec:T}--\ref{sec: ev}. Roughly it says that, under some weak restrictions, $\prec$ satisfies all the usual algebraic properties of $\leq$ on $\R$.

\begin{lemma} \label{lemma: basic properties of prec}
\begin{enumerate}
\item
Suppose that $X(u,v) \prec \Psi(u,v)$ uniformly in $u \in U$ and $v \in V$. If $\abs{V} \leq N^C$ for some constant $C$ then
\begin{equation*}
\sum_{v \in V} X(u,v) \;\prec\; \sum_{v \in V} \Psi(u,v)
\end{equation*}
uniformly in $u$.
\item
Suppose that $X_{1}(u) \prec \Psi_{1}(u)$ uniformly in $u$ and $X_{2}(u) \prec \Psi_{2}(u)$ uniformly in $u$. Then
\begin{equation*}
X_{1}(u) X_{2}(u) \;\prec\; \Psi_{1}(u) \Psi_{2}(u)
\end{equation*}
uniformly in $u$.
\item
Suppose that $\Psi(u) \geq N^{-C}$ for all $u$ and that for all $p$ there is a constant $C_p$ such that $\E \abs{X(u)}^p \leq N^{C_p}$ for all $u$. Then, provided that $X(u) \prec \Psi(u)$ uniformly in $u$, we have
\begin{equation*}
P_a X(u) \;\prec\; \Psi(u) \qquad \text{and} \qquad Q_a X(u) \;\prec\; \Psi(u)
\end{equation*}
uniformly in $u$ and $a$.
\end{enumerate}
\end{lemma}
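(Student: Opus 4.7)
The plan is to handle the three parts in order of increasing subtlety. Parts (i) and (ii) reduce to direct union-bound arguments over the exceptional events, while part (iii) requires a moment argument to pass the relation $\prec$ through the conditional expectation $P_a$.

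For part (i), fix target parameters $\epsilon, D > 0$ and apply the hypothesis $X(u,v) \prec \Psi(u,v)$ with exponents $\epsilon$ and $D + C + 1$; a union bound over the at most $N^C$ elements of $V$ gives
\begin{equation*}
\P\qB{\exists v \in V \col |X(u,v)| > N^{\epsilon} \Psi(u,v)} \;\leq\; N^{C} \cdot N^{-(D+C+1)} \;\leq\; N^{-D-1}
\end{equation*}
for $N$ large enough, and on the complement we have $\sum_v |X(u,v)| \leq N^\epsilon \sum_v \Psi(u,v)$. Part (ii) is similar: the inclusion $\{|X_1 X_2| > N^\epsilon \Psi_1 \Psi_2\} \subset \{|X_1| > N^{\epsilon/2} \Psi_1\} \cup \{|X_2| > N^{\epsilon/2} \Psi_2\}$ lets one apply the individual probability bounds to each factor.

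The substantive case is part (iii). Because $Q_a X = X - P_a X$, it suffices to prove $P_a X \prec \Psi$; the bound on $Q_a X$ then follows from part (ii) (with one factor taken to be the constant $1$). To control $P_a X$, the plan is to apply Markov's inequality at a high moment $p$ to be chosen later, together with the conditional Jensen inequality $\E |P_a X|^p \leq \E |X|^p$. The main obstacle, and the step where the hypotheses $\Psi \geq N^{-C}$ and $\E |X|^q \leq N^{C_q}$ are essential, is estimating $\E |X|^p$ in terms of $\Psi^p$: the relation $X \prec \Psi$ only controls $X$ off an event of tiny probability, which could nevertheless contribute enormously to moments. To handle this I would split
\begin{equation*}
\E|X|^p \;\leq\; (N^{\epsilon} \Psi)^p + \E\qB{|X|^p \ind{|X| > N^{\epsilon} \Psi}}\,,
\end{equation*}
and bound the second term by Cauchy--Schwarz as $(\E |X|^{2p})^{1/2} \, \P[|X| > N^\epsilon \Psi]^{1/2} \leq N^{C_{2p}/2} \cdot N^{-D'/2}$, where $D'$ can be chosen arbitrarily large by hypothesis. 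Using $\Psi^p \geq N^{-Cp}$, a suitable choice of $D'$ makes this correction negligible compared to $(N^\epsilon \Psi)^p$. Plugging back into Markov and choosing $p$ with $p \epsilon \geq 2D$ yields $\P[|P_a X| > N^{2\epsilon} \Psi] \leq N^{-D}$ for $N$ large; relabelling $2\epsilon \mapsto \epsilon$ completes the argument. Uniformity in $a$ is automatic, since none of the constants $C$, $C_{2p}$, $p$, or $D'$ depend on $a$.
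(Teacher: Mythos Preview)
Your proof is correct and follows exactly the approach the paper sketches: union bounds for (i) and (ii), and a high-moment Markov/Chebyshev estimate combined with conditional Jensen for (iii). One trivial slip: deducing $Q_a X \prec \Psi$ from $X \prec \Psi$ and $P_a X \prec \Psi$ is a statement about a \emph{sum}, so it follows from (i) with $|V|=2$ (or a direct union bound), not from (ii).
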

\begin{proof}
The claims (i) and (ii) follow from a simple union bound. The claim (iii) follows from Chebyshev's inequality, using a high-moment estimate combined with Jensen's inequality for partial expectation. We omit the details.
\end{proof}

Note that if for any $\e>0$ and $p\geq 1$ we have
\be
   \E |X|^p \leq N^\e \Psi^p
\label{XP}
\ee
for large enough $N$ (depending on $\epsilon$ and $p$)
then $X\prec\Psi$ by Chebyshev's inequality. Moreover, if $X\leq \Psi$ almost surely,
then $X\prec \Psi$. Hence $O_\prec(\Psi)$ describes a larger class of random variables than $O(\Psi)$.

We need the following bound on $\Lambda$.

\begin{lemma} \label{lm:lsc}
We have
\begin{equation} \label{rough estimate on Lambda}
\Lambda(z) \;\prec\; \frac{1}{\sqrt{M \eta}}\,
\end{equation}
uniformly for $z\in \bS$.
\end{lemma}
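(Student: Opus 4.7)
The plan is to invoke the standard local semicircle law for band matrices established in \cite{EYY1, EKYY4}, whose hypotheses are verified by our setup in Section~\ref{sec:setup}: the variances $s_{ij}$ satisfy \eqref{sij}--\eqref{s leq W}, the entries satisfy the moment bounds \eqref{finite moments}, and $z \in \f S$ lies in the bulk regime with $\eta \ge M^{-1+\gamma}$. Since the statement is essentially a restatement of those results (with $\prec$ replacing the high-probability bounds used there), the proof consists primarily of citation. Below I sketch the underlying argument in case a self-contained derivation is desired.

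First I would apply Schur's complement to write, for each $i$,
\begin{equation*}
\frac{1}{G_{ii}} \;=\; -z - h_{ii} - \sum_{k,l}^{(i)} h_{ik}\, G^{(i)}_{kl}\, h_{li}\,.
\end{equation*}
Taking partial expectation $P_i$ in the $i$-th row decomposes the double sum into its mean $\sum_k s_{ik} G^{(i)}_{kk}$ plus a fluctuation $Z_i \deq (1-P_i) \sum^{(i)}_{k,l} h_{ik} G^{(i)}_{kl} h_{li}$. Since $\{h_{ik}\}_{k\neq i}$ is independent of $G^{(i)}$, a standard quadratic large deviation estimate (e.g.\ Lemma~A.1 of \cite{EKYY1}) combined with the Ward identity
\begin{equation*}
\sum_{k,l} \absb{G^{(i)}_{kl}}^2 \;=\; \frac{\im \tr G^{(i)}}{\eta}
\end{equation*}
and the bound $\max_{k,l} s_{kl} \le M^{-1}$ would give $|Z_i|^2 \prec (M\eta)^{-1}(\im m + \Lambda)$.

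Next I would pass from the minor $G^{(i)}_{kk}$ to $G_{kk}$ using the resolvent identity $G^{(i)}_{kk} = G_{kk} - G_{ki}G_{ik}/G_{ii}$, whose error is again controlled by Ward's identity and $\Lambda$. Setting $v_i \deq G_{ii} - m(z)$ and using \eqref{identity for msc}, one arrives at a perturbed self-consistent equation of the schematic form $v_i = m^2 \sum_k s_{ik} v_k + O_\prec(\epsilon_i)$, with $|\epsilon_i|\prec (M\eta)^{-1/2}$ coming from $Z_i$. The spectral bound \eqref{Slow}, combined with $|m(z)|^2 \le 1 - c\eta$ in the bulk \eqref{m3}, ensures invertibility of $1 - |m|^2 S$ uniformly in $z \in \f S$; this stability analysis, together with a continuity bootstrap down from the weak a priori bound $\Lambda \le 1 + \eta^{-1}$, then yields $\max_i |v_i| \prec (M\eta)^{-1/2}$. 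For the off-diagonal entries one uses
\begin{equation*}
G_{ij} \;=\; -G_{ii} G^{(i)}_{jj} \pbb{ h_{ij} + \sum_{k,l}^{(ij)} h_{ik} G^{(ij)}_{kl} h_{lj} }
\end{equation*}
and a bilinear large deviation bound to get the same estimate for $|G_{ij}|$ with $i\ne j$.

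The main obstacle in executing this strategy from scratch is the stability/bootstrap analysis of the self-consistent equation: one must propagate the bound from the trivial scale $\eta^{-1}$ down to $(M\eta)^{-1/2}$ in a quantified way, which requires careful handling of the dependence of the error on $\Lambda$ itself. Since this analysis has already been carried out in full detail in \cite{EYY1} and refined in \cite{EKYY4} under precisely our hypotheses, the cleanest course is simply to cite those works.
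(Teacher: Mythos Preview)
Your proposal is correct and matches the paper's approach exactly: the paper does not prove this lemma either, but simply cites Proposition~3.3 of \cite{EYY1} (noting that the subexponential tail assumption there can be relaxed to \eqref{finite moments}) and refers to \cite{EKYY4} for a streamlined version. Your supplementary sketch of the Schur-complement/self-consistent-equation/bootstrap argument is a faithful summary of what those references do, and goes beyond what the paper itself provides.
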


Away from the spectral edges, i.e.\ for $z\in \bS$,  this bound was proved in 
Proposition 3.3 of \cite{EYY1}. In \cite{EYY1}, the matrix entries $x_{ij}$ were assumed to have at most 
subexponential tails (a stronger assumption than \eqref{finite moments} for all $p$), but the proof of \cite{EYY1} extends trivially to our case.
See \cite{EKYY4} for a simplified and generalized alternative proof.

The following result collects some elementary facts about $m$.
\begin{lemma} \label{lemma: msc}
We have the identity
\begin{equation} \label{id m im m}
1 - \abs{m}^2 \;=\; \frac{\eta \abs{m}^2}{\im m}\,.
\end{equation}
There is a constant $c > 0$ such that
\begin{equation} \label{m is bounded}
c \;\leq\; \abs{m} \;\leq\; 1
\end{equation}
on $z \in \f S$.
Furthermore,
\begin{equation} \label{m2}
|m|^2 \;=\; 1 - \eta\alpha + O(\eta^2)
\end{equation}
for $z \in \f S$, with $\alpha$ given in \eqref{def alpha}.
We also have the bounds
\begin{equation}\label{m3}
\im m \;\asymp\; 1 \,, \qquad 1 - |m|^2 \;\asymp\; \eta
\end{equation}
for $z \in \f S$. (The implicit constants in the two latter estimates depend on $\kappa$.)
\end{lemma}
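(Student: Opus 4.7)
The plan is to derive all four claims from the defining quadratic relation \eqref{identity for msc} together with the explicit formula \eqref{explicit m}, with the identity \eqref{id m im m} playing the central role.

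First I would prove \eqref{id m im m} by taking the imaginary part of $z = -m - 1/m$. Since $\im(1/m) = -\im m / |m|^2$, this yields
\begin{equation*}
\eta \;=\; -\im m + \frac{\im m}{|m|^2} \;=\; \im m \cdot \frac{1 - |m|^2}{|m|^2}\,,
\end{equation*}
which rearranges into \eqref{id m im m}. Note that this already gives $|m| \leq 1$ on $\f S$, since $\eta > 0$ and $\im m > 0$ force the right-hand side to be nonnegative. For the lower bound $|m| \geq c$ in \eqref{m is bounded}, I would use continuity and compactness: the explicit formula \eqref{explicit m} shows that $m$ extends continuously to the closure $\overline{\f S}$ (including $\eta = 0$), and at $\eta = 0$ with $E \in [-2+\kappa, 2-\kappa]$ one has $m(E) = \tfrac12(-E + \ii \sqrt{4 - E^2})$, so $|m(E)| = 1$. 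Since $\overline{\f S}$ is compact and $|m|$ is continuous and nowhere zero there, $|m|$ is bounded below by some $c > 0$.

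For \eqref{m3}, the same compactness argument applied to $\im m$ gives $\im m \asymp 1$ on $\f S$, because at $\eta = 0$ one has $\im m = \tfrac12\sqrt{4 - E^2} \in [\tfrac12\sqrt{\kappa(4-\kappa)}, 1]$. The relation $1 - |m|^2 \asymp \eta$ is then immediate from \eqref{id m im m} combined with $|m| \asymp 1$ and $\im m \asymp 1$.

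Finally, for the expansion \eqref{m2}, I would again use \eqref{id m im m}, which gives $|m|^2 = 1 - \eta |m|^2/\im m$. Since $|m|^2 = 1 + O(\eta)$ and $\im m = \tfrac12\sqrt{4-E^2} + O(\eta) = \alpha^{-1} + O(\eta)$ (as read off from \eqref{explicit m} by Taylor expanding $\sqrt{z^2 - 4}$ around $z = E$, using $E^2 < 4$ to stay away from the branch point), substituting these into the identity yields $|m|^2 = 1 - \alpha \eta + O(\eta^2)$, as claimed. No real obstacle is anticipated; the whole lemma is a direct consequence of the quadratic equation and a compactness argument on $\overline{\f S}$.
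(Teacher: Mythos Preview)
Your proof is correct and follows essentially the same route as the paper: deriving \eqref{id m im m} from the imaginary part of \eqref{identity for msc}, obtaining \eqref{m2} from \eqref{id m im m} together with $\im m = \alpha^{-1} + O(\eta)$, and deducing $1-|m|^2 \asymp \eta$ from the identity. The only difference is that where the paper cites \cite[Lemma 4.2]{EYY2} for \eqref{m is bounded} and the first half of \eqref{m3}, you supply a direct compactness argument on $\overline{\f S}$ using the explicit formula \eqref{explicit m}; this makes your argument self-contained but is otherwise the same idea.
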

\begin{proof}
The identity \eqref{id m im m} follows by taking the imaginary part of \eqref{identity for msc}. The estimate \eqref{m is bounded} was proved in \cite{EYY2}, Lemma 4.2. From \eqref{explicit m} we find $\im m = 1/\alpha + O(\eta)$, from which \eqref{m2} follows easily using \eqref{id m im m}. Finally, \eqref{m3} follows from Lemma 4.2 in \cite{EYY2} combined with \eqref{id m im m} and \eqref{m is bounded}.
\end{proof}

The following resolvent identities form the backbone of all of our proofs. They first appeared in \cite[Lemmas 4.1 and 4.2]{EYY1} and \cite[Lemma 6.10]{EKYY2}.
 The idea behind them is that a resolvent entry $G_{ij}$
depends strongly on the $i$-th and $j$-th columns of $H$, but weakly on all other columns. The first set of identities (called Family A) determine how to make
a resolvent entry $G_{ij}$ independent of an additional index $k \neq i,j$.
The second set (Family B) identities express the dependence of a resolvent entry $G_{ij}$ on the entries in the $i$-th or in the $j$-th column of $H$.

\begin{lemma}[Resolvent identities; {\cite[Lemma 3.5]{EKY2}}] \label{lemma: res id}
For any Hermitian matrix $H$ and $T \subset \{1, \dots, N\}$ the following identities hold.
\begin{description}
\item[Family A.] For $i,j,k \notin T$ and $k \neq i,j,$ we have
\begin{equation} \label{resolvent expansion type 1}
G_{ij}^{(T)} \;=\; G_{ij}^{(Tk)} + \frac{G_{ik}^{(T)} G_{kj}^{(T)}}{G_{kk}^{(T)}}\,, \qquad \frac{1}{G_{ii}^{(T)}} \;=\; \frac{1}{G_{ii}^{(Tk)}} - \frac{G_{ik}^{(T)} G_{ki}^{(T)}}{G_{ii}^{(T)} G_{ii}^{(Tk)} G_{kk}^{(T)}}\,.
\end{equation}
\item[Family B.]
For $i,j \notin T$ satisfying $i \neq j$ we have
\begin{equation}
\label{res exp 2b}
G_{ij}^{(T)} \;=\; - G_{ii}^{(T)} \sum_{k}^{(Ti)} h_{ik} G_{kj}^{(Ti)} \;=\; - G_{jj}^{(T)} \sum_k^{(Tj)} G_{ik}^{(Tj)} h_{k j}\,.
\end{equation}
\end{description}
\end{lemma}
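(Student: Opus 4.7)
Both families are standard consequences of the Schur complement formula for block matrix inversion, so the strategy is purely algebraic: identify the right $2\times 2$ block decomposition of $H^{(T)}-z$ in each case and invert it explicitly. I would prove Family B first, since the Family A identity is most naturally obtained by combining two instances of the Family B formula (or, alternatively, by second-order resolvent perturbation).

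For Family B, I decompose the index set $\{1,\dots,N\}\setminus T$ into $\{i\}$ and its complement, so that $H^{(T)}-z$ becomes a $2\times 2$ block matrix with scalar $(i,i)$ block $h_{ii}-z$, off-diagonal blocks equal to the $i$-th row/column of $H^{(T)}$ with the diagonal entry removed, and lower-right block $H^{(Ti)}-z$. Schur complement inversion then yields $G^{(T)}_{ii}=(h_{ii}-z-\sum_{k,l}^{(Ti)} h_{ik}G^{(Ti)}_{kl}h_{li})^{-1}$ and, for $j\neq i$, $G^{(T)}_{ij}=-G^{(T)}_{ii}\sum_k^{(Ti)} h_{ik}G^{(Ti)}_{kj}$. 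This is precisely the first equality of \eqref{res exp 2b}; the second follows by the symmetric block decomposition at index $j$ (or by taking the adjoint together with Hermiticity of $H$).

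For Family A, I write out $G^{(T)}_{ij}$ and $G^{(Tk)}_{ij}$ using the Family B formula through row $i$. The first gives a sum over indices in $\{1,\dots,N\}\setminus(T\cup\{i\})$, which I split into the $k$-term and the rest; after applying Family B once more to $G^{(Ti)}_{kj}$ (through column $k$) and using the recursive structure of the resolvent on the removed-row index set, the ``rest'' contribution reconstructs $G^{(Tk)}_{ij}$ and the $k$-term produces exactly $G^{(T)}_{ik}G^{(T)}_{kj}/G^{(T)}_{kk}$. A cleaner alternative is to apply the standard resolvent identity to the rank-$\leq 2$ perturbation $H^{(T)}-H^{(Tk)}$ supported on row/column $k$, invert the resulting $2\times 2$ self-energy matrix exactly, and repackage the output in terms of $G^{(T)}_{ik}$, $G^{(T)}_{kj}$, and $G^{(T)}_{kk}$. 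Either derivation yields the first identity; the second identity is then immediate, obtained by setting $j=i$ in the first, dividing through by $G^{(T)}_{ii}G^{(Tk)}_{ii}$, and rearranging.

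The whole argument is conceptually routine; the only real obstacle is bookkeeping, in particular keeping careful track of which indices are excluded (especially the $h_{kk}$ diagonal entry when separating row $k$, and the restriction $k\notin T$ in the Family A hypothesis) so that the minor notation $H^{(T)}$, $H^{(Ti)}$, $H^{(Tk)}$ is applied consistently. Once the block decompositions are set up correctly, the identities drop out of Schur's formula with no estimation needed.
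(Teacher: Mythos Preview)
Your approach is correct and is precisely the standard one; the paper does not actually give a proof of this lemma but merely cites \cite[Lemma 3.5]{EKY2} (and notes the identities first appeared in \cite[Lemmas 4.1 and 4.2]{EYY1} and \cite[Lemma 6.10]{EKYY2}), where the argument is exactly the Schur complement / block-inversion computation you outline. Your derivation of the second identity in Family~A from the first by setting $j=i$ and dividing through is also correct.
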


\begin{definition} \label{def:adm ctprm}
The deterministic control parameter $\Psi$ is \emph{admissible} if
\begin{equation} \label{admissible Psi}
M^{-1/2} \;\leq\; \Psi^{(N)}(z) \;\leq\; M^{-\gamma/2}
\end{equation}
for all $N$ and $z \in \f S$. (Recall the parameter $\gamma$ from \eqref{def:S}.)
\end{definition}
A typical example of an admissible control parameter is
\begin{equation} \label{standard Psi}
\Psi(z) \;=\; \frac{1}{\sqrt{M \eta}}\,.
\end{equation}
If $\Psi$ is admissible then the lower bound in \eqref{admissible Psi} together with \eqref{hsmallerW} ensure that $h_{ij} \prec \Psi$.

The following lemma gives an expansion formula for the diagonal entries of $G$.

\begin{lemma}
Suppose that $\Lambda \prec \Psi$ for some admissible $\Psi$. Defining
\begin{equation*}
Z_i \;\deq\; \sum_{k,l}^{(i)}Q_i \pb{h_{ik}  G^{(i)}_{kl} h_{li}}\,,
\end{equation*}
we have
\begin{equation} \label{Giishort}
G_{ii} \;=\; m +m^2Z_i  + O_\prec\pb{\Psi^2 + M^{-1/2}} \,, \qquad Z_i \;\prec\; \Psi\,.
\end{equation}
\end{lemma}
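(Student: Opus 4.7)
The plan is to start from the Schur complement (Feshbach) identity
$$G_{ii}^{-1} \;=\; h_{ii} - z - \sum_{k,l}^{(i)} h_{ik} G^{(i)}_{kl} h_{li},$$
which exploits that the minor $G^{(i)}$ is independent of the $i$-th row and column of $H$. Decomposing the quadratic form into its partial mean and fluctuation gives
$$\sum_{k,l}^{(i)} h_{ik} G^{(i)}_{kl} h_{li} \;=\; \sum_k^{(i)} s_{ik} G^{(i)}_{kk} + Z_i,$$
because only the diagonal survives $P_i$ on account of $\E(h_{ik}\bar h_{il}) = s_{ik}\delta_{kl}$, which holds in both symmetry classes \eqref{RS} and \eqref{CH}.

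To bound $Z_i$, I would condition on $H^{(i)}$ and invoke the standard high-moment large-deviation estimate for quadratic forms of independent centred variables (the variant used throughout \cite{EKY2}), obtaining
$$|Z_i|^2 \;\prec\; \sum_{k,l}^{(i)} s_{ik} s_{il} |G^{(i)}_{kl}|^2 \;\le\; M^{-1} \max_l \sum_k |G^{(i)}_{kl}|^2.$$
The Ward identity $\sum_k |G^{(i)}_{kl}|^2 = \eta^{-1} \im G^{(i)}_{ll}$ together with $\im G^{(i)}_{ll} = \im m + O_\prec(\Psi) = O(1)$ (using $\Lambda \prec \Psi$ and the resolvent identity $G^{(i)}_{kk} - G_{kk} = -G_{ki}G_{ik}/G_{ii}$) then yields $Z_i \prec (M\eta)^{-1/2}$, which is at most $\Psi$ in the relevant regime.

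For the main expansion I would use $m + m^{-1} + z = 0$ and $\sum_k s_{ik} = 1$ to rewrite
$$m^{-1} - G_{ii}^{-1} \;=\; B_i, \qquad B_i \;\deq\; -h_{ii} - s_{ii}\, m + \sum_k^{(i)} s_{ik}(G^{(i)}_{kk} - m) + Z_i.$$
The individual pieces are controlled by $h_{ii} \prec M^{-1/2}$, $s_{ii} = O(M^{-1})$, $|G^{(i)}_{kk} - m| \prec \Psi$ (from $\Lambda \prec \Psi$ and the resolvent identity), and the bound $Z_i \prec \Psi$ above, so $|B_i| \prec \Psi + M^{-1/2} \ll 1$. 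The geometric expansion
$$G_{ii} \;=\; \frac{m}{1 - m B_i} \;=\; m + m^2 B_i + O\bigl(B_i^2\bigr)$$
is therefore valid, and isolating the $Z_i$-term leaves $G_{ii} - m - m^2 Z_i = -m^2 h_{ii} - s_{ii} m^3 + m^2 \sum_k^{(i)} s_{ik}(G^{(i)}_{kk} - m) + O(B_i^2)$. The $h_{ii}$ piece is $\prec M^{-1/2}$ and the quadratic remainder is $\prec \Psi^2 + M^{-1}$, both of which fit into $\Psi^2 + M^{-1/2}$.

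The main obstacle is the averaged deviation $\sum_k^{(i)} s_{ik}(G^{(i)}_{kk} - m)$: a pointwise bound yields only $O_\prec(\Psi)$, while the statement of the lemma requires $O_\prec(\Psi^2 + M^{-1/2})$. I would obtain the improvement by iterating the expansion one step, writing $G^{(i)}_{kk} - m = m^2 Z_k^{(i)} + O_\prec(\Psi^2 + M^{-1/2})$ for the analogue of $Z_k$ on the minor $H^{(i)}$, and then invoking the simplest instance of the fluctuation averaging mechanism alluded to around \eqref{simpleFA} (see \cite[Theorem~4.6]{EKYY4}) to conclude $\bigl|\sum_k^{(i)} s_{ik} Z_k^{(i)}\bigr| \prec M^{-1/2}\Psi \le \Psi^2$. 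The symmetry conditions \eqref{RS}/\eqref{CH} enter only through the identification of the partial mean; the rest of the argument is insensitive to the symmetry class.
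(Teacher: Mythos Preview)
The paper itself does not give a self-contained proof of this lemma; it simply cites Equations (9.1) and (9.2) of \cite{EKY2}. Your reconstruction is the standard one and gets most of the way there, but the last paragraph contains a genuine gap.

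You correctly identify that the only nontrivial point is the bound
\[
A_i \;\deq\; \sum_k^{(i)} s_{ik}\bigl(G^{(i)}_{kk}-m\bigr) \;\prec\; \Psi^2 + M^{-1/2}\,.
\]
However, your proposed fix is circular: you write $G^{(i)}_{kk}-m = m^2 Z_k^{(i)} + O_\prec(\Psi^2+M^{-1/2})$, which is exactly the statement of the lemma applied to the minor $H^{(i)}$. ``Iterating the expansion one step'' only yields $G^{(i)}_{kk}-m = m^2\bigl(Z_k^{(i)} + A_k^{(i)} - h_{kk}\bigr) + O_\prec(\Psi^2+M^{-1})$, so after averaging in $k$ and applying fluctuation averaging to the $Z_k^{(i)}$ piece (which gives $\Psi^2$, not $M^{-1/2}\Psi$ --- the $Q_k G_{kk}$ are strongly correlated, so the independent-sum heuristic fails), you are left with $A_i = m^2 (SA)_i + O_\prec(\Psi^2+M^{-1/2})$, i.e.\ a self-consistent vector equation rather than a closed bound.

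The missing ingredient is the stability of $1-m^2S$ in the bulk. For $z\in\f S$ one has $\im m^2 \asymp 1$ (or $m^2=-1$ at $E=0$, handled by \eqref{Slow}), so $\abs{1-m^2\wh S(p)}\ge c(\kappa)$ uniformly in $p$; hence $\norm{(1-m^2S)^{-1}}_{\ell^2\to\ell^2}\le C$ and, by the same argument as in Lemma~\ref{lm:invert}, $\norm{(1-m^2S)^{-1}}_{\ell^\infty\to\ell^\infty}\le C\log N$. Inverting the self-consistent equation then gives $A_i \prec (\log N)(\Psi^2+M^{-1/2}) \prec \Psi^2+M^{-1/2}$, which closes the argument. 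This stability step is precisely what is encoded in the cited equations of \cite{EKY2} (and is the content of the self-consistent approach in \cite{EKYY4}); it cannot be replaced by a single iteration.
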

\begin{proof}
The claim is an immediate consequence of Equations (9.1) and (9.2) in \cite{EKY2}. (Related but less explicit formulas were also obtained in \cite{EYY3}).
\end{proof}

\subsection{Averaging of fluctuations}\label{sec:Av}
In this section we collect the necessary results from \cite{EKY2}. The following proposition is a special case of the Fluctuation Averaging Theorem of \cite{EKY2}.

\begin{proposition} \label{prop: examples}
Suppose that $\Lambda \prec \Psi$ for some admissible control parameter $\Psi$. Then
\begin{equation}\label{GGnoQ}
 \sum_{a}^{(\mu \nu)} s_{\rho a} G_{\mu a} G_{a \nu} \;\prec\; \Psi(\Psi + M^{-1/4})^2 \,, \qquad
\sum_{a}^{(\mu)} s_{\rho a} G_{\mu a} G_{a \mu}^* \;\prec\; \Psi^{2}\,,
\end{equation}
and
\begin{equation}\label{GGwithQ}
\sum_{a}^{(\mu)}s_{\rho a} Q_a (G_{\mu a} G_{a \mu}) \;\prec\; \Psi^3  \,, \qquad
 \sum_{a}^{(\mu)} s_{\rho a} Q_a(G_{\mu a} G_{a \mu}^*) \;\prec\; \Psi^2\pb{\Psi + M^{-1/4}}^{2}\,,
\end{equation}
as well as
\be\label{GGG*}
  \sum_{a \neq b}^{(\mu)} s_{\rho a} s_{ab} G_{ba}G_{a\mu}G_{\mu b}^*  \;\prec\; \Psi^2\pb{\Psi + M^{-1/4}}^{2}\,,
\qquad
\sum_{a \neq b}^{(\mu)} s_{\rho a} s_{ba} G_{ba}G_{a\mu}G_{\mu b}^*  \;\prec\; \Psi^2\pb{\Psi + M^{-1/4}}^{2}\,.
\ee
\end{proposition}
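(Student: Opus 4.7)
The plan is to view each of the six estimates as a specialization of the Fluctuation Averaging Theorem of \cite{EKY2}, which bounds weighted sums of the form $\sum_a s_{\rho a} Q_a Y_a$ when $Y_a$ is a monomial in the entries of $G$ and $G^*$. The inputs to that theorem are an a priori off-diagonal weight $|Y_a|\prec\Psi^k$ and the algebraic structure of $Y_a$; the output records the additional powers of $\Psi$ and $M^{-1/4}$ extractable from $Q$-cancellations via the resolvent identities of Lemma~\ref{lemma: res id}.

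First, the four $Q_a$-containing estimates in \eqref{GGwithQ} and \eqref{GGG*} are direct applications of this theorem. The stated exponents are read off from combinatorial data: the number of off-diagonal factors in $Y_a$, the presence of $G^*$ (which, through the Ward-type identity $\sum_a|G_{\mu a}|^2 = \eta^{-1}\im G_{\mu\mu}$, makes $GG^*$-type sums one factor of $\Psi$ larger than $GG$-type sums, thereby turning $\Psi^3$ into $\Psi^2(\Psi+M^{-1/4})^2$), and in \eqref{GGG*} the localizing factor $s_{ab}$ (or $s_{ba}$), which restricts the inner summation to a window of size $M$ and effectively contracts one summation variable.

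For the two estimates in \eqref{GGnoQ}, which lack an explicit $Q_a$, I would decompose
\[
G_{\mu a}G_{a\nu} \;=\; P_a(G_{\mu a}G_{a\nu}) + Q_a(G_{\mu a}G_{a\nu})
\]
(and similarly for $G_{\mu a}G_{a\mu}^*$), bound the $Q_a$-part by the same mechanism as above, and treat the $P_a$-part using Family B of Lemma~\ref{lemma: res id} together with $P_a(h_{ka}h_{al}) = \delta_{kl}s_{ka}$. This yields, up to errors absorbed into $O_\prec(\Psi^3)$ through $G_{aa}=m+O_\prec(\Psi)$,
\[
P_a(G_{\mu a}G_{a\nu}) \;=\; m^2\sum_k^{(a)} s_{ka}\,G^{(a)}_{\mu k}\,G^{(a)}_{k\nu} + \text{lower order}\,,
\]
whose weighted sum against $s_{\rho a}$ is controlled by a further fluctuation-averaging argument producing the claimed $\Psi(\Psi+M^{-1/4})^2$. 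The second estimate simplifies considerably because $G_{a\mu}^* = \overline{G_{\mu a}}$ converts the sum into $\sum_a^{(\mu)} s_{\rho a}|G_{\mu a}|^2$, and the Ward identity $\sum_a|G_{\mu a}|^2 = \eta^{-1}\im G_{\mu\mu}\prec\eta^{-1}$ combined with $s_{\rho a}\leq M^{-1}$ gives $\Psi^2$ directly.

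The main obstacle is the combinatorial bookkeeping underlying the Fluctuation Averaging Theorem itself: one must expand $\E|\sum_a s_{\rho a}Q_a Y_a|^{2p}$ using the resolvent identities, reorganize the resulting $h$-pairings into graphs, and verify that every surviving contribution after expectation carries the advertised extra powers of $\Psi$. Rather than reproduce this lengthy analysis, I would invoke the theorem of \cite{EKY2} as a black box, so that the remaining task reduces to matching each monomial above with the hypotheses of that theorem and reading off the exponent.
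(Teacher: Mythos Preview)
Your approach is essentially the same as the paper's: both invoke the results of \cite{EKY2} as a black box. The paper's proof is in fact shorter than your sketch --- it simply cites Theorem~4.8, Lemma~B.1, and Proposition~B.2 of \cite{EKY2} (together with $\im m \geq c_\kappa$ from \eqref{m3}) and stops; in particular, the estimates in \eqref{GGnoQ} without $Q_a$ are already covered directly by those auxiliary results, so your $P_a + Q_a$ decomposition is not needed.

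Two small corrections to your sketch. First, your Ward-identity argument for the second estimate in \eqref{GGnoQ} only yields $(M\eta)^{-1}$, which need not be $\prec \Psi^2$ for a general admissible $\Psi$; the trivial termwise bound $|G_{\mu a}|^2 \prec \Psi^2$ and $\sum_a s_{\rho a} \leq 1$ already suffice. Second, your treatment of the $P_a$-part in the first estimate of \eqref{GGnoQ} reduces $\sum_a s_{\rho a} P_a(G_{\mu a}G_{a\nu})$ to a double sum $\sum_a s_{\rho a}\, m^2 \sum_k^{(a)} s_{ka} G_{\mu k}^{(a)} G_{k\nu}^{(a)}$ of the same structural type, so the phrase ``a further fluctuation-averaging argument'' is in danger of being circular. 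The clean route is to use the pointwise bound $P_a(G_{\mu a}G_{a\nu}) \prec \Psi(\Psi + M^{-1/4})^2$ from Theorem~4.15 of \cite{EKY2} (quoted here as Proposition~\ref{PGG}) and then sum against $s_{\rho a}$.
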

\begin{proof}
All of these estimates follow immediately from Theorem 4.8, Lemma B.1, and Proposition B.2 of \cite{EKY2}, recalling that by assumption $\im m \geq c_\kappa$ by \eqref{m3}.
\end{proof}

The important quantity on the right-hand sides of 
\eqref{GGnoQ}, \eqref{GGwithQ} and \eqref{GGG*} is $\Psi$.
The additional factors $M^{-1/4}$ are a technical\footnote{This nuisance is necessary, however, and Proposition \ref{prop: examples} would be false without the factors of $M^{-1/4}$. See \cite[Remark 4.10]{EKY2}.} nuisance, but their precise form will play some role in the large-$\eta$
regime, where $M^{-1/4}$ is not negligible compared to $\Psi$.

To interpret these estimates, we note that each summand in \eqref{GGnoQ}, \eqref{GGwithQ}, and \eqref{GGG*} has a naive size given by $\Psi^k$, where $k$ is the number of off-diagonal resolvent entries in the summand. Without averaging, this naive size would be a sharp upper bound.
In the second estimate  in  \eqref{GGnoQ} the averaging does not improve the bound
since $G_{\mu a}G_{a\mu}^* =  |G_{\mu a}|^2$ is positive. In all other estimates,
the monomial on the left-hand side 
either has a nontrivial phase or its expectation is zero thanks to $Q_a$.   
Proposition \ref{prop: examples} asserts
 that in these cases the averaged quantity is  smaller than its individual summands.
Note that this averaging of fluctuations is effective even though the entries of
 $G$ may be strongly correlated. How many additional factors of $\Psi$ one gains
 depends on the structure of the left-hand side in a subtle way;
see Theorem~4.8 of \cite{EKY2} for the precise statement.
 For the  applications in this paper
the second bound in \eqref{GGwithQ} is especially important; here the averaging yields a gain of two extra factors of $\Psi$.

We  remark that all these bounds also hold if the weight functions
$s_{\rho a}$ are replaced with a more general weight function.
The precise definition is given in Definition 4.4 of \cite{EKY2}. 
All the weights used in this paper satisfy Definition 4.4 of \cite{EKY2}.

We also note that averaging in indices can be replaced by expectations.
We shall need the following special case of Theorem 4.15 of \cite{EKY2}.
\begin{proposition}\label{PGG}
Suppose that $\Lambda \prec \Psi$ for some admissible control parameter $\Psi$. Then for $a \neq \mu,\nu$
\begin{equation}\label{GGnoQP}
  P_a (G_{\mu a} G_{a \nu}) \;\prec\; \Psi(\Psi + M^{-1/4})^2\,. 
\end{equation}
\end{proposition}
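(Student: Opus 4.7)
The plan is to prove this as a partial-expectation analogue of the first bound in \eqref{GGnoQ}, ultimately by invoking the general Fluctuation Averaging Theorem (Theorem~4.15 of \cite{EKY2}), but let me outline the direct argument to expose why the bound has exactly this form.

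First I would apply the Family~B identities \eqref{res exp 2b} to both factors to expose the row-$a$ randomness:
\[
G_{\mu a} G_{a\nu} \;=\; G_{aa}^2 \sum_{k,l}^{(a)} G_{\mu k}^{(a)} G_{l\nu}^{(a)} h_{ka} h_{al}.
\]
The minor entries $G_{\mu k}^{(a)}, G_{l\nu}^{(a)}$ are independent of row $a$, and they satisfy the bound $|G_{ij}^{(a)} - \delta_{ij}m| \prec \Psi$ (using Family~A identities together with $\Lambda \prec \Psi$), so they are inert under $P_a$. All row-$a$ randomness sits in $G_{aa}^2$ and the monomials $h_{ka} h_{al}$.

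Next I would split $G_{aa}^2 = m^2 + (G_{aa}^2 - m^2)$ with $G_{aa}^2 - m^2 \prec \Psi$. For the $m^2$ term, using that $P_a[h_{ka} h_{al}] = \delta_{kl} s_{ka}$ (since $h_{ka}$ and $h_{al}$ are independent, centred random variables in row $a$ when $k\neq l$, and $h_{ka}h_{ak}=|h_{ka}|^2$), only the $k = l$ diagonal survives, yielding
\[
m^2 \sum_k^{(a)} s_{ka} \, G_{\mu k}^{(a)} G_{k\nu}^{(a)}.
\]
Separating the two exceptional indices $k \in \{\mu,\nu\}$, each contributing $\prec M^{-1}\Psi$ via the trivial bounds $|G_{\mu\mu}^{(a)}|, |G_{\nu\nu}^{(a)}| \prec 1$ and $s_{\mu a}, s_{\nu a} \le M^{-1}$, and applying the first estimate of \eqref{GGnoQ} to the resolvent $G^{(a)}$ of the minor $H^{(a)}$ bounds this sum by $\Psi(\Psi + M^{-1/4})^2$, which is the target order.

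The main obstacle is the cross term $(G_{aa}^2 - m^2)\sum_{k,l}^{(a)} G_{\mu k}^{(a)} G_{l\nu}^{(a)} h_{ka} h_{al}$, because $G_{aa}$ is strongly correlated with the $h_{ka}$, $h_{al}$ through the Schur complement formula $G_{aa}^{-1} = h_{aa} - z - \sum_{k,l}^{(a)} h_{ak} G_{kl}^{(a)} h_{la}$. To handle this I would expand $G_{aa}^2 - m^2$ further using the Schur identity, separate the $k=l$ and $k\neq l$ contributions (the latter of which has vanishing conditional mean, so its contribution is controlled by a variance estimate that produces an extra factor $\prec \Psi + M^{-1/4}$), and reduce everything to high-moment bounds on quadratic forms in the entries of row $a$. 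This bookkeeping is carried out in full generality in \cite[Theorem~4.15]{EKY2}, and the cleanest finish is to invoke that theorem directly, noting $\im m \ge c_\kappa$ by \eqref{m3}.
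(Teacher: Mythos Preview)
Your proposal is correct and lands in the same place as the paper: the paper simply states that this is a special case of Theorem~4.15 of \cite{EKY2} (with $\im m \ge c_\kappa$), and you ultimately invoke that same theorem. The additional heuristic expansion you provide---using \eqref{res exp 2b} to expose the row-$a$ randomness, splitting off the $m^2$ term, and reducing to the first bound in \eqref{GGnoQ} for the minor $G^{(a)}$---is a faithful sketch of why the bound $\Psi(\Psi+M^{-1/4})^2$ arises, and your identification of the cross term $(G_{aa}^2-m^2)(\cdots)$ as the obstruction requiring the full machinery of \cite{EKY2} is exactly right.
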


\section{Self-consistent equation for $T$}\label{sec:T}

After these preparations, we now move on to the main arguments of this paper. Throughout this section we work in the general $d$-dimensional setting of Section \ref{sec:setup}. In this section we derive a self-consistent equation for $T$, given in Theorem \ref{thm:T}, whose error terms are controlled precisely using the fluctuation averaging from Proposition \ref{prop: examples}. In Section \ref{sec:inv} we solve this self-consistent equation; the result is given in Proposition \ref{YleqT}.

\begin{theorem}\label{thm:T}
Suppose that $\Lambda\prec \Psi$ for some admissible control parameter $\Psi$.
Then we have
\be\label{mainE2T}
   T_{xy} \;=\; |m|^2  \sum_{i}s_{xi}T_{iy} + |m|^2s_{xy}+
   O_\prec \pb{\Psi^4+\Psi^2 M^{-1/2}}\,.
\ee
In matrix notation,
\be\label{Tself}
   T \;=\; |m|^2 ST + |m|^2S + \cE  
\ee
where
the matrix entries of the error satisfy
\be\label{ceerror}
\cE_{xy} \;\prec\; \Psi^4+\Psi^2 M^{-1/2}\,.
\ee
\end{theorem}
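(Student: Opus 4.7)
The plan is to establish \eqref{mainE2T} entrywise; the matrix form \eqref{Tself}--\eqref{ceerror} is then immediate from the definition of $T$. The $i = y$ summand in $T_{xy}$ gives
\[
s_{xy}|G_{yy}|^{2} \;=\; s_{xy}|m|^{2} + O_\prec\pb{s_{xy}\Psi} \;=\; s_{xy}|m|^{2} + O_\prec\pb{\Psi^{2} M^{-1/2}},
\]
using admissibility $\Psi \geq M^{-1/2}$; this supplies the inhomogeneity $|m|^{2}s_{xy}$. For $i\neq y$, Family B of Lemma \ref{lemma: res id} yields $|G_{iy}|^{2} = |G_{ii}|^{2} A_{i}$ with $A_{i} \deq \sum_{k,l}^{(i)} h_{ik}\bar h_{il}\,G^{(i)}_{ky}\overline{G^{(i)}_{ly}}$, and I decompose $|G_{iy}|^{2} = P_{i}|G_{iy}|^{2} + Q_{i}|G_{iy}|^{2}$.

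For the fluctuating part, the observation $Q_{i}|G_{iy}|^{2} = Q_{i}(G_{yi}G^{*}_{iy})$ matches exactly the second bound in \eqref{GGwithQ} with $\mu=y$, $\rho=x$, $a=i$. Therefore, invoking Proposition \ref{prop: examples},
\[
\sum_{i\neq y} s_{xi}\,Q_{i}|G_{iy}|^{2} \;\prec\; \Psi^{2}(\Psi+M^{-1/4})^{2} \;\leq\; C\pb{\Psi^{4} + \Psi^{2} M^{-1/2}}.
\]
This step is where the precise structure of the second estimate of \eqref{GGwithQ} (with the extra $\Psi^{2}$-gain over $\Psi^{2}$) is essential.

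To evaluate the deterministic part, I use that $G^{(i)}$ is independent of row $i$ and $\E(h_{ik}\bar h_{il}\mid H^{(i)}) = \delta_{kl}s_{ik}$ to obtain $P_{i}A_{i} = \sum_{k}^{(i)}s_{ik}|G^{(i)}_{ky}|^{2}$. Applying Family A \eqref{resolvent expansion type 1}, $G^{(i)}_{ky} = G_{ky} - G_{ki}G_{iy}/G_{ii}$, a short expansion gives
\[
P_{i}A_{i} \;=\; T_{iy} \,-\, s_{ii}|G_{iy}|^{2} \,-\, 2\re\pbb{\frac{G_{iy}}{G_{ii}}\sum_{k}^{(i)} s_{ik} G_{ki}\bar G_{ky}} \,+\, \frac{|G_{iy}|^{2}}{|G_{ii}|^{2}} \sum_{k}^{(i)} s_{ik}|G_{ki}|^{2}.
\]
The first correction is bounded by $M^{-1}\Psi^{2}\leq \Psi^{2}M^{-1/2}$; the last two sums are controlled by the first and second estimates of \eqref{GGnoQ} respectively, and combined with the prefactors $G_{iy}/G_{ii}\prec\Psi$ and $|G_{iy}|^{2}/|G_{ii}|^{2}\prec\Psi^{2}$ they yield errors $\prec \Psi^{2}(\Psi+M^{-1/4})^{2}\leq \Psi^{4}+\Psi^{2}M^{-1/2}$. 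Thus $P_{i}A_{i} = T_{iy} + O_\prec(\Psi^{4} + \Psi^{2}M^{-1/2})$ uniformly in $i\neq y$.

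Finally, to replace $|G_{ii}|^{2}$ by $|m|^{2}$, I use \eqref{Giishort} to write $|G_{ii}|^{2} - |m|^{2} = 2|m|^{2}\re(m Z_{i}) + O_\prec(\Psi^{2} + M^{-1/2})$. The deterministic-size error, multiplied by $A_{i}\prec\Psi^{2}$ and then partially averaged, contributes $O_\prec(\Psi^{4}+\Psi^{2}M^{-1/2})$. The remaining term $2|m|^{2}\re\pb{m\,P_{i}(Z_{i}A_{i})}$ is the main obstacle: both $Z_{i}$ and $A_{i}$ are quadratic forms in row $i$, and their covariance must be computed by Wick contraction. Because $P_{i}Z_{i} = 0$, only cross-contractions survive; after enumerating the pairings and absorbing the small corrections from $P_{i}|h_{ik}|^{4} - s_{ik}^{2} = O(s_{ik}^{2})$, the surviving contribution is the triple-resolvent average
\[
P_{i}(Z_{i}A_{i}) \;=\; \sum_{k\neq l}^{(i)} s_{ik}s_{il}\,G^{(i)}_{kl}\,G^{(i)}_{ly}\,\overline{G^{(i)}_{ky}} \;+\; O_\prec\pb{\Psi^{2}M^{-1/2}}.
\]
The triple sum is estimated by first summing in $l$ with external indices $k,y$ using the first bound in \eqref{GGnoQ} (giving $\prec\Psi(\Psi+M^{-1/4})^{2}$), then summing in $k$ against the factor $s_{ik}\overline{G^{(i)}_{ky}}\prec\Psi$; this gives $P_{i}(Z_{i}A_{i})\prec\Psi^{2}(\Psi+M^{-1/4})^{2}$. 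Assembling everything, $P_{i}|G_{iy}|^{2} = |m|^{2}T_{iy} + O_\prec(\Psi^{4}+\Psi^{2}M^{-1/2})$ for $i\neq y$; averaging with weight $s_{xi}$, and noting that $s_{xy}T_{yy}\prec M^{-1}\Psi^{2}\leq \Psi^{2}M^{-1/2}$ restores the missing $i=y$ summand in $(ST)_{xy}$, completing the proof of \eqref{mainE2T}. The hardest step is the Wick-contraction bound on $P_{i}(Z_{i}A_{i})$, which is precisely where a triple-$G$ averaging (beyond the pairwise estimates of \eqref{GGnoQ}) must be carried out.
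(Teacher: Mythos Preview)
Your proof follows the same strategy as the paper's: split $T_{xy}$ into $P_i$ and $Q_i$ contributions, control the latter by the second estimate of \eqref{GGwithQ}, expand $|G_{ii}|^2$ via \eqref{Giishort}, and evaluate the $Z_i$--$A_i$ covariance by Wick contraction followed by a triple-resolvent averaging bound. One imprecision is worth flagging: the cross sum $\sum_k^{(i)} s_{ik} G_{ki}\overline{G_{ky}}$ that you bound by ``the first estimate of \eqref{GGnoQ}'' is of mixed $G\cdot G^*$ type, whereas the first estimate of \eqref{GGnoQ} is stated for $G_{\mu a}G_{a\nu}$ with both factors of the same type; the bound you need does follow from the general fluctuation averaging theorem of \cite{EKY2}, but it is not literally the estimate you cite. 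The paper sidesteps this by handling the cross term differently---for the averaged identity \eqref{mainE1T} it keeps the full triple product $G_{ki}G_{iy}G^*_{yk}$ together and gains the extra factor of $\Psi$ from the outer average over $i$ via \eqref{GGG*}, while for the pointwise statement \eqref{mainE12T} it applies a further $P_i$ and invokes Proposition~\ref{PGG}. Your iterated bound on the triple-$G$ sum (sum in $l$ via \eqref{GGnoQ}, then trivially in $k$) is a valid alternative to the paper's direct use of \eqref{GGG*}. Finally, in the real symmetric case the Wick contraction for $P_i(Z_iA_i)$ produces a second pairing (the analogue of \eqref{term for RS}) that you have not mentioned; it is bounded by the same argument.
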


The naive size of $T_{xy}$ is of order $\Psi^2$. Notice that the error term in
the self-consistent equation \eqref{Tself} is smaller by 
two orders. This improvement is essentially due to 
second estimate of \eqref{GGwithQ}.

\begin{remark} Instead of averaging in the first index of the resolvent
in the definition of $T$ \eqref{def:T}, we could have averaged in
the second, resulting in the quantity $T_{xy}' \deq \sum_{j} \abs{G_{xj}}^2 s_{jy}$. 
Then $T'$ satisfies the self-consistent equation
\begin{equation*}
T' \;=\; |m|^2 T' S + |m|^2S + \cE'\,,
\end{equation*}
where $\cE'$  also satisfies \eqref{ceerror}.
\end{remark}

Before the proof we mention that this result also gives a self-consistent equation for
the two-sided averaged quantity
$$
   Y_{xz}  \;\deq\; (TS)_{xz} \;=\; \sum_{iy} s_{xi} |G_{iy}|^2 s_{yz} \;=\; (S T')_{xz}\,.
$$
Taking the average $\sum_y s_{yz}$ of \eqref{mainE2T}, we get
the following corollary.

\begin{corollary}
Suppose that $\Lambda\prec \Psi$ for some admissible control parameter $\Psi$. Then we have
\be 
 Y = |m|^2 SY + |m|^2 S^2 + \cE \qquad \text{and} \qquad
 Y = |m|^2 YS + |m|^2 S^2 + \wt {\cE}\,,
\label{Yself}
\ee
where $\cE$ and $\wt {\cE}$ each satisfy \eqref{ceerror}.
\end{corollary}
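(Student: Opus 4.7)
The plan is to derive the corollary directly from Theorem~\ref{thm:T} and its companion version for $T'$ (mentioned in the remark preceding the corollary) by multiplying through by $S$ on the appropriate side. Since $Y=TS=ST'$ by the definition given just before the corollary, the two claimed identities should follow from the two self-consistent equations for $T$ and $T'$ respectively, and the only nontrivial point is to check that the error term remains of the form \eqref{ceerror} after an additional multiplication by $S$.

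For the first identity: starting from Theorem~\ref{thm:T},
\[
T \;=\; |m|^2 ST + |m|^2 S + \cE,
\]
I would right-multiply both sides by $S$, obtaining
\[
Y \;=\; TS \;=\; |m|^2 S(TS) + |m|^2 S^2 + \cE S \;=\; |m|^2 SY + |m|^2 S^2 + \cE S.
\]
So it remains to verify that $\cE S$ still satisfies the entrywise bound \eqref{ceerror}. This is where I would invoke the uniformity of the bound $\cE_{xj}\prec\Psi^4+\Psi^2 M^{-1/2}$ combined with Lemma~\ref{lemma: basic properties of prec}(i): since $S$ is stochastic, $\sum_j s_{jy}=1$, and
\[
(\cE S)_{xy} \;=\; \sum_j \cE_{xj}\,s_{jy}
\]
is a convex combination of the $\cE_{xj}$'s (viewed as random variables with the same dominating parameter). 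Lemma~\ref{lemma: basic properties of prec}(i) therefore gives $(\cE S)_{xy}\prec \sum_j s_{jy}(\Psi^4+\Psi^2 M^{-1/2})=\Psi^4+\Psi^2 M^{-1/2}$, uniformly in $x,y$, as required.

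For the second identity I would use the analogue of Theorem~\ref{thm:T} for $T'$ stated in the remark, namely
\[
T' \;=\; |m|^2 T'S + |m|^2 S + \cE',
\]
with $\cE'_{xy}\prec\Psi^4+\Psi^2 M^{-1/2}$. Now left-multiplication by $S$ yields
\[
Y \;=\; ST' \;=\; |m|^2 (ST')S + |m|^2 S^2 + S\cE' \;=\; |m|^2 YS + |m|^2 S^2 + \wt\cE,
\]
with $\wt\cE\deq S\cE'$. The same stochasticity argument (now using $\sum_i s_{xi}=1$) shows that $\wt\cE$ also satisfies \eqref{ceerror}.

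There is no real obstacle here; the only place one needs to be slightly careful is to ensure that the bound on $\cE_{xj}$ (resp.\ $\cE'_{ij}$) is uniform in both indices so that Lemma~\ref{lemma: basic properties of prec}(i) applies. Since this uniformity is exactly what \eqref{ceerror} asserts, the proof reduces to a one-line matrix manipulation plus a single invocation of the stochasticity of $S$ and the algebraic properties of $\prec$.
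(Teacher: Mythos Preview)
Your proposal is correct and is essentially identical to the paper's own argument: the paper simply says ``taking the average $\sum_y s_{yz}$ of \eqref{mainE2T}'' for the first identity, which is exactly your right-multiplication by $S$, and the second identity follows symmetrically from the $T'$ equation in the remark. Your explicit invocation of the stochasticity of $S$ together with Lemma~\ref{lemma: basic properties of prec}(i) for the error term just spells out what the paper leaves implicit.
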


The rest of this section is devoted to the proof of Theorem~\ref{thm:T}.
We begin by writing
\be
T_{xy} \;=\; \sum_{i} s_{xi} P_{i} |G_{iy}|^2 + \wt T_{xy}\,, \qquad
 \wt T_{xy} \;\deq\; \sum_{i} s_{xi} Q_i |G_{iy}|^2\,.
\label{fluc1T}
\ee
Then by the second formula in \eqref{GGwithQ}, we have
\be
 \wt T_{xy} =  
O_\prec(\Psi^4+\Psi^2M^{-1/2})\,.
\label{flucT}
\ee
Notice that   \eqref{GGwithQ} applies only to the summands $i\ne y$
in \eqref{fluc1T}. The estimate for the summand $i=y$ follows from
$$
   s_{xy} Q_y  |G_{yy}|^2 \;=\;  
 s_{xy} Q_y  |G_{yy}-m|^2 + 2 s_{xy}\re  \bar m Q_y (G_{yy}-m) 
 \;=\;  O_\prec(\Psi^2M^{-1/2})\,,
$$
where we used that $(G_{yy}-m) \prec \Psi$ (see \eqref{Giishort}) and that $\Psi$ 
is admissible, and in particular $\Psi M^{-1} \le \Psi^2 M^{-1/2}$.

We shall compute
$\sum_{i} s_{xi} P_{i} |G_{iy}|^2$ up to error terms of order $\Psi^4$.
We have the following result.
\begin{lemma}\label{prop:Ess}
Suppose that $\Lambda\prec \Psi$ for some admissible control parameter $\Psi$. Then
\be\label{mainE12T}
  P_{i}|G_{iy}|^2 \;=  \;
 |m|^2 P_{i} T_{iy} + |m|^2 \delta_{iy}
+ O_\prec\pb{\Psi^4+\Psi^2M^{-1/2}} + \delta_{iy} O_\prec(\Psi^2 + M^{-1/2})
\ee
and
\be\label{mainE1T}
 \sum_{i} s_{xi} P_{i}|G_{iy}|^2 \;=\;  
 |m|^2 \sum_{i} s_{xi} T_{iy} + |m|^2 s_{xy} 
+ O_\prec \pb{\Psi^4+\Psi^2M^{-1/2}}\,.
\ee
\end{lemma}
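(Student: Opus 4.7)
The plan is to establish the pointwise identity \eqref{mainE12T} via the Schur complement (Family B) and Family A resolvent identities, applying the fluctuation averaging of Propositions~\ref{prop: examples} and \ref{PGG} to improve naive $\Psi^3$ errors to $\Psi^4 + \Psi^2 M^{-1/2}$; the summed version \eqref{mainE1T} then follows by summing against $s_{xi}$ and controlling the residual fluctuation $\sum_i s_{xi} Q_i T_{iy}$ by the same machinery.

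For the diagonal case $i = y$, the expansion \eqref{Giishort} combined with $P_y Z_y = 0$ yields $P_y|G_{yy}|^2 = |m|^2 + O_\prec(\Psi^2 + M^{-1/2})$, and the crude bound $P_y T_{yy} \prec \Psi^2 + M^{-1}$ makes $|m|^2 P_y T_{yy}$ of the same order; both fit into the $\delta_{iy} O_\prec(\Psi^2 + M^{-1/2})$ error in \eqref{mainE12T}.

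For $i \neq y$, the Family B identity \eqref{res exp 2b} gives $G_{iy} = -G_{ii} S_i$ with $S_i \deq \sum_k^{(i)} h_{ik} G_{ky}^{(i)}$, hence $|G_{iy}|^2 = |G_{ii}|^2 |S_i|^2$. Writing $|G_{ii}|^2 = |m|^2 + E_i$ and using the independence of the $i$-th row from $H^{(i)}$ together with $P_i(h_{ik}\bar h_{il}) = \delta_{kl} s_{ik}$, we obtain
\begin{equation*}
P_i |G_{iy}|^2 \;=\; |m|^2 \sum_k^{(i)} s_{ik} |G_{ky}^{(i)}|^2 \;+\; P_i\qb{E_i\, |S_i|^2}\,.
\end{equation*}
The leading sum is rewritten in terms of $|m|^2 P_i T_{iy}$ via the Family A identity $G_{ky} = G_{ky}^{(i)} + G_{ki} G_{iy}/G_{ii}$; the resulting corrections have the form $s_{ik} G_{ky}^{(i)} P_i(G_{ki} G_{iy}/G_{ii})$, which (after isolating the factor $1/G_{ii}\approx 1/m$) are of pointwise size $\Psi \cdot O_\prec(\Psi^3 + \Psi M^{-1/2}) = O_\prec(\Psi^4 + \Psi^2 M^{-1/2})$ by Proposition~\ref{PGG}.

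The main obstacle is sharpening $P_i[E_i|S_i|^2]$ from its naive bound $\Psi^3$ down to $\Psi^4 + \Psi^2 M^{-1/2}$. Using $P_i Q_i = 0$ I decompose it as $P_iE_i \cdot P_i|S_i|^2 + P_i[Q_iE_i \cdot Q_i|S_i|^2]$: the first term is immediately bounded by $(\Psi^2+M^{-1/2})(\Psi^2+M^{-1}) \prec \Psi^4 + \Psi^2 M^{-1/2}$ since $P_iE_i \prec \Psi^2+M^{-1/2}$ by \eqref{Giishort} and $P_iZ_i=0$; for the second, I expand $Q_iE_i = 2|m|^2\re(mZ_i) + O_\prec(\Psi^2)$ and compute $P_i$ of the resulting quartic in the $i$-th row via Gaussian-type pairing, which produces the principal contribution $\sum_{k\neq l}^{(i)} s_{ik} s_{il} G_{kl}^{(i)} G_{ly}^{(i)} \bar G_{ky}^{(i)}$, bounded by Cauchy-Schwarz in $l$ combined with \eqref{GGnoQ} applied to the inner $k$-sum. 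Summing \eqref{mainE12T} against $s_{xi}$ and reducing $\sum_i s_{xi}(T_{iy} - P_i T_{iy})$ to triple-product sums of the type $\sum_{i\neq k} s_{xi} s_{ik} G_{ki} G_{iy} \bar G_{ky}$ bounded by \eqref{GGG*} then completes the proof of \eqref{mainE1T}.
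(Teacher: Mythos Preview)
Your approach is essentially the paper's: expand via \eqref{res exp 2b} and \eqref{Giishort}, take $P_i$, convert $\sum_k^{(i)} s_{ik}|G_{ky}^{(i)}|^2$ to $P_iT_{iy}$ via Family~A and Proposition~\ref{PGG}, and handle the $Z_i$ cross term by pairing to produce the triple sum $\sum_{k\neq l}^{(i)} s_{ik}s_{il}G_{kl}^{(i)}G_{ly}^{(i)}\bar G_{ky}^{(i)}$. The derivation of \eqref{mainE1T} by summing \eqref{mainE12T} and reducing $\sum_i s_{xi}Q_iT_{iy}$ to the same triple-product form bounded by \eqref{GGG*} is equivalent to the paper's route through \eqref{1234T}--\eqref{gr}.

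There is one slip in your treatment of the triple sum. You propose Cauchy--Schwarz in $l$ together with the first estimate of \eqref{GGnoQ} on the inner $k$-sum. But after isolating $G_{ly}^{(i)}$ the inner sum is $\sum_{k}^{(i)} s_{ik}\, G_{kl}^{(i)}\,\bar G_{ky}^{(i)}=\sum_k^{(i)} s_{ik}\, G_{kl}^{(i)}\,G_{yk}^{(i)*}$, a mixed $G\,G^*$ expression with distinct external indices $l\neq y$; this is not the form $\sum_a s_{\rho a}G_{\mu a}G_{a\nu}$ of the first bound in \eqref{GGnoQ} (nor the second, which requires $\mu=\nu$). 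The fix is immediate: group by $k$ instead and apply Cauchy--Schwarz in $k$, so that the inner sum becomes $\sum_{l}^{(i)} s_{il}\,G_{kl}^{(i)}G_{ly}^{(i)}$, which matches the first estimate of \eqref{GGnoQ} exactly and yields $\Psi(\Psi+M^{-1/4})^2$. The paper bypasses this altogether: it removes the upper indices $(i)$ at cost $O_\prec(\Psi^4)$ and then applies \eqref{GGG*} directly to the full double sum $\sum_{k,l} s_{ik}s_{il}G_{lk}G_{ky}G_{yl}^*$, obtaining $\Psi^2(\Psi+M^{-1/4})^2$ in one step. Either route gives the required $O_\prec(\Psi^4+\Psi^2M^{-1/2})$.
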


(It is possible to improve the last error term in \eqref{mainE12T}, but we shall not need this.) Before proving Lemma \ref{prop:Ess}, we show how it implies Theorem \ref{thm:T}.

\begin{proof}[Proof of Theorem \ref{thm:T}]
Equation \eqref{mainE2T} is an immediate consequence of \eqref{mainE1T}, \eqref{flucT}, and \eqref{fluc1T}. 
Hence Theorem~\ref{thm:T} follows from Lemma \ref{prop:Ess}.
\end{proof}

\begin{proof}[Proof of Lemma \ref{prop:Ess}]
Throughout the following we shall repeatedly need the simple estimate
\begin{equation} \label{simple estimate of Gii}
 \frac{1}{G_{ii}} \;=\; \frac{1}{m + O_\prec(\Psi)}
  \;=\; \frac{1}{m} \pb{1+ O_\prec(\Psi)} \;=\; O_\prec(1)\,,
\end{equation}
where the first step follows from $\Lambda \prec \Psi$, the second from the fact that $\Psi$ is admissible, 
and the last from \eqref{m is bounded}. In particular, for $k \neq i,j$, from \eqref{resolvent expansion type 1} we get the estimate
\begin{equation} \label{estimate on Gijk}
G_{ij}^{(k)} \;=\; G_{ij} - \frac{G_{ik} G_{kj}}{G_{kk}} \;=\; O_\prec (\delta_{ij} + \Psi + \Psi^2) \;=\; O_\prec(\delta_{ij} + \Psi)\,.
\end{equation}

We start the proof of Lemma \ref{prop:Ess} with the case $i\ne y$.
Using \eqref{Giishort} we get
\begin{align}
    |G_{iy}|^2 &\;=\;  |G_{ii}|^2\sum_{k,l}^{(i)}  h_{ik} G_{ky}^{(i)}
   G_{yl}^{(i)*}h_{li}
\notag \\
    &\;=\; |m|^2 \absB{  1+m Z_i +O_\prec \pb{\Psi^2 + M^{-1/2}}}^2
\sum_{k,l}^{(i)}  h_{ik} G_{ky}^{(i)}G_{yl}^{(i)*}h_{li}
\notag \\ \label{E|G|T}
   &\;=\; |m|^2 \pb{ 1 +2\re (mZ_i)}
\sum_{k,l}^{(i)}  h_{ik} G_{ky}^{(i)}G_{yl}^{(i)*}h_{li} 
+ O_\prec(\Psi^4+\Psi^2M^{-1/2})\,,
\end{align}
where in the last step we used $Z_i\prec \Psi$ and
the large deviation bound (see Lemma~\ref{lm:MZ})
\be
  \sum_{k}^{(i)}  h_{ik} G_{ky}^{(i)}  \;\prec\; \Psi\,.
\label{1sideldeT}
\ee
We may now compute the contribution of the main term in \eqref{E|G|T} to $P_i \abs{G_{iy}}^2$.
 Still assuming $i \neq y$, we find
\begin{align}
 P_i\sum_{k,l}^{(i)}  h_{ik} G_{ky}^{(i)}G_{yl}^{(i)*}h_{li}  &\;=\;
 \sum_{k}^{(i)} s_{ik} \absb{G_{ky}^{(i)}}^2
\notag \\
   &\;=\; \sum_{k}^{(i)} s_{ik} \absbb{G_{ky}-  \frac{G_{ki}G_{iy}}{G_{ii}}}^2
\notag \\
  &\;=\;  \sum_{k}^{(i)} s_{ik} \qbb{|G_{ky}|^2
  -2\Re \pbb{\ol G_{ky} 
\frac{G_{ki}G_{iy}}{G_{ii}}}} + O_\prec(\Psi^4)
\notag \\ \label{123T}
    &\;=\;  T_{iy} - 2\Re \sum_{k}^{(i)} s_{ik}\ol G_{ky} \frac{G_{ki}G_{iy}}{G_{ii}} +
 O_\prec\pb{\Psi^4 +\Psi^2 M^{-1}}\,.
\end{align}
In the third step we used \eqref{simple estimate of Gii}, and in the last step
 we added the missing term $k = i$ to obtain $T_{iy}$;
the resulting error term is $ O_\prec(\Psi^2M^{-1})$ since $i\ne y$.
Next, using \eqref{simple estimate of Gii} we get
\begin{align}\label{1eT}
 \sum_{k}^{(i)} s_{ik}\ol G_{ky}\frac{G_{ki}G_{iy}}{G_{ii}}
 &\;=\;\frac{1}{m}  \sum_{k}^{(i)} s_{ik} G_{ki}G_{iy} G_{yk}^*+
O_\prec(\Psi^4) \non\\ 
  &\;=\;  \frac{1}{m} \sum_{k}^{(i)} s_{ik} G_{ki}G_{iy} G_{yk}^{(i) *}+
O_\prec(\Psi^4)\,,
\end{align}
In the second step, using \eqref{resolvent expansion type 1} and \eqref{simple estimate of Gii}, 
we inserted an upper index $i$ as a preparation to taking the partial expectation $P_i$.
We obtain
\begin{align}\label{1233T}
P_i\sum_{k,l}^{(i)}  h_{ik} G_{ky}^{(i)}G_{yl}^{(i)*}h_{li} 
   &\;=\;  T_{iy} -2 \Re \sum_{k}^{(i)} s_{ik}\frac{1}{m} G_{ki}G_{iy} G_{yk}^{(i)*}
 + O_\prec\pb{\Psi^4 + \Psi^2 M^{-1}}.
\end{align}
Now we take the partial expectation in $i$ in \eqref{1233T}. Using that
$$
  P_i \pb{ G_{ki}G_{iy} G_{yk}^{(i)*}} \;=\;
G_{yk}^{(i)*} P_i \pb{ G_{ki}G_{iy}}
 \;\prec\; (\delta_{yk} + \Psi) \Psi \pb{\Psi + M^{-1/4}}^2 
$$
by Proposition~\ref{PGG} and \eqref{estimate on Gijk}, we find that $P_i$ applied to the second term
in \eqref{1233T} results in a quantity $O_\prec\big(  \Psi^2 (\Psi + M^{-1/4})^2 \big)$. 
Thus the contribution of main term in \eqref{E|G|T} to 
$P_{i}|G_{iy}|^2$ is
\be\label{contr1T}
  |m|^2 P_i\sum_{k,l}^{(i)}  h_{ik} G_{ky}^{(i)}G_{yl}^{(i)*}h_{li}
\;=\; |m|^2 P_{i} T_{iy} +O_\prec\big(  \Psi^4 +\Psi^2 M^{-1/2} \big).
\ee

Next, we look at the contribution of the term with a $Z$ in \eqref{E|G|T}:
\begin{align*}
&\mspace{-30mu}   |m|^2P_{i} \qbb{\pb{ mZ_i +\ol{mZ_i}}
\sum_{k,l}^{(i)}  h_{ik} G_{ky}^{(i)}G_{yl}^{(i)*}h_{li}}
\\
 &\;=\; |m|^2 P_{i} \qBB{\sum_{c,d}^{(i)} Q_i \pB{ h_{ic} \big( mG_{cd}^{(i)} + \ol{m} G_{cd}^{(i)*}\big)
h_{di} }
\sum_{k,l}^{(i)}  h_{ik} G_{ky}^{(i)}G_{yl}^{(i)*}h_{li}}
\\
  &\;=\; |m|^2\sum_{k,l}^{(i)} s_{ik}s_{il}  \pB{m G_{lk}^{(i)}  G_{ky}^{(i)}G_{yl}^{(i)*}
  + \ov{m} G_{lk}^{(i)*} G_{ky}^{(i)}G_{yl}^{(i)*}} + O_\prec( \Psi^2 M^{-1} + M^{-2})
\\
  &\;=\; |m|^22\Re \pbb{ m \sum_{k,l}^{(i)} s_{ik}s_{il}   G_{lk}^{(i)}  G_{ky}^{(i)}G_{yl}^{(i)*}} +  
O_\prec( \Psi^2 M^{-1}+M^{-2})\,.
\end{align*}
In the second step we assumed for simplicity that we are dealing with the complex Hermitian case \eqref{CH}; thus, thanks to $Q_i$, the only allowed pairing of the entries of $H$ imposes $c = l$ and $k = d$. (In the real symmetric case \eqref{RS}, there is one other term, \eqref{term for RS} below, which is estimated in the same way.) The error terms stem from the summands $c=d=l=k\ne y$ and $c=d=l=k=y$, respectively. Here we used \eqref{estimate on Gijk} as well 
as the bound $\E \abs{h_{ic}^4} \leq C M^{-1} s_{ic} \leq C M^{-2}$, an immediate consequence of \eqref{finite moments}
 and \eqref{hsmallerW}.
In the last step we switched the indices $k$ and $l$.
 
Now we remove the upper indices at the expense of an error of size $O_\prec(\Psi^4)$, and then add back the exceptional summation index $i$
as before. This gives
\begin{align}
    |m|^2P_{i} \pbb{2\re \big( mZ_i\big) \sum_{k,l}^{(i)}  h_{ik} G_{ky}^{(i)}G_{yl}^{(i)*}h_{li}}
 &\;=\;
|m|^22\Re \Big[ m \sum_{k,l} s_{ik}s_{il}   G_{lk} G_{ky}G_{yl}^{*}\Big]
 + O_\prec(\Psi^4 +\Psi^2M^{-1}  + M^{-2})
\notag \\ \label{ZET}
 &\;=\; O_\prec \pb{\Psi^4+\Psi^2M^{-1/2}}\,,
\end{align}
where in the second step we used \eqref{GGG*}; the various cases of coinciding indices $k,l,y$ are easily dealt with using the bound $M^{-1/2} \leq \Psi$.

As remarked above, in the real symmetric case \eqref{RS} the pairing $c=k$, $d=l$ is also
possible. This gives rise to the additional error term
\begin{equation} \label{term for RS}
  \sum_{k,l}  s_{ik}s_{il}   G_{kl} G_{ky}G_{yl}^{*} \;\prec\; \Psi^4+\Psi^2M^{-1/2}\,,
\end{equation}
where we used \eqref{GGG*}.

Combining \eqref{E|G|T}, \eqref{contr1T} and \eqref{ZET} yields 
\be
 P_{i}|G_{iy}|^2 \;=\;  
 |m|^2  P_{i} T_{iy}  + O_\prec \pb{\Psi^4+\Psi^2M^{-1/2}}
\label{gr3T}
\ee
for $i \neq y$.
This proves \eqref{mainE12T} for the case $i\ne y$.

If $i=y$ we compute
\be\label{egiiT}
  P_{y}|G_{yy}|^2
  =   |m|^2 + P_{y}|G_{yy}-m|^2  +2 P_y \re \big[ m  (G_{yy}-m)\big]\\
  =  |m|^2  + O_\prec\big(\Psi^2 + M^{-1/2})\,.
\ee
Here we used that  $G_{yy}-m \prec \Psi$ and that 
 $P_y (G_{yy}-m) \prec \Psi^2 + M^{-1/2}$ by \eqref{Giishort}. It is possible 
to compute this term to high order in $\Psi$, but we shall not need this.

For the proof of \eqref{mainE1T} we run almost the same argument as above
but now we 
aim at removing all upper indices $i$. We first consider the summands $i \neq y$. From  \eqref{1233T} we get
\begin{equation}\label{1234T}
P_i\sum_{k,l}^{(i)}  h_{ik} G_{ky}^{(i)}G_{yl}^{(i)*}h_{li} 
   \;=\;  T_{iy} -2 \Re \sum_{k} s_{ik}\frac{1}{m} G_{ki}G_{iy} G_{yk}^*
 + O_\prec\pb{\Psi^4 + \Psi^2 M^{-1/2}}\,,
\end{equation}
where we removed the upper index $i$ using \eqref{resolvent expansion type 1}, and included the summand $k = i$ at the expense of a negligible error term.
Taking the average $\sum_{i}^{(y)} s_{xi}$ of the second term on the right-hand side yields
\be
\sum_{i}^{(y)}\sum_ks_{xi} s_{ik}\frac{1}{m} G_{ki}G_{iy} G_{yk}^* \;=\;
\sum_{i,k}s_{xi} s_{ik}\frac{1}{m} G_{ki}G_{iy} G_{yk}^* + O_\prec(\Psi^2M^{-1}+ M^{-2})
\;=\;  O_\prec\pb{\Psi^4 + \Psi^2M^{-1/2}}\,.
\label{gr}
\ee
In the first step we just added the exceptional index $i=y$, and estimated the additional terms 
with $i = y$ using $s_{xy} s_{yk} \leq M^{-1} s_{yk} \leq M^{-2}$ as well as $G_{ky}G_{yy} G_{yk}^* \prec \delta_{ky} + \Psi^2$.
In the second step we used \eqref{GGG*}.
Note that the gain comes from the summation index $i$.

Thus the contribution of the main term of \eqref{E|G|T} to 
$\sum_{i}^{(y)} s_{xi} P_{i}|G_{iy}|^2$ is
\be\label{contr11}
 |m|^2 \sum_{i}^{(y)} s_{xi}P_i\sum_{k,l}^{(i)}  h_{ik} G_{ky}^{(i)}G_{yl}^{(i)*}h_{li} 
   \;=\; |m|^2  \sum_{i}^{(y)}s_{xi}T_{iy}
+O_\prec(\Psi^4+\Psi^2M^{-1/2})\,.
\ee
The contributions of the error terms in \eqref{E|G|T} to 
$\sum_{i\ne y} s_{xi} P_{i}|G_{iy}|^2$ are of order $O_\prec\pb{\Psi^4 + \Psi^2M^{-1/2}}$; this is true even without averaging (see \eqref{ZET}). Thus we have
\be
 \sum_{i}^{(y)} s_{xi} P_{i}|G_{iy}|^2 \;=\;  
  |m|^2  \sum_{i}^{(y)}s_{xi}T_{iy}
 +  O_\prec\pb{\Psi^4+\Psi^2M^{-1/2}}\,.
\label{gr4T}
\ee

Finally, we consider the case $i = y$. From \eqref{egiiT} we get
$$
    s_{xy} P_{y}|G_{yy}|^2
  \;=\;  |m|^2 s_{xy} + O_\prec\big(M^{-1}\Psi^2)
 \;=\;  |m|^2 s_{xy} + O_\prec(\Psi^4)\,.
$$
This formula provides the missing summands $i=y$ in \eqref{gr4T} and hence yields \eqref{mainE1T}.
\end{proof}

\section{Solving the equation for $T$}\label{sec:inv}

Throughout this section we work in the general $d$-dimensional setting of Section \ref{sec:setup}.
In this section we solve the self-consistent equation \eqref{Tself} 
to determine $T$. This involves inverting the matrix $1-|m|^2S$.
The stability of the self-consistent equation \eqref{Tself} is provided by the spectral gap of $S$. 
In the regime of complete delocalization, this gap is larger (and hence more effective) if we restrict $S$ to the subspace orthogonal to the vector $\f e = N^{-1/2}(1,1,\ldots ,1)^T$ (see the remarks after Lemma \ref{lm:invert} for more details). Therefore, we deal with the span of $\f e$ and its orthogonal complement separately. Define the rank-one projection
\begin{equation*}
\Pi \;\deq\; \f e \f e^*\,.
\end{equation*}
Thus, the entries $\Pi_{ij}$ of $\Pi$ are all equal to $1/N$, and $S\Pi=\Pi S=\Pi$ since $S$ is stochastic by \eqref{stoch}. The complementary projection is denoted by $\ol \Pi \deq 1 - \Pi$.

We perform this splitting on $T_{xy}$ only in the $x$ coordinate, regarding $y$ as fixed. Thus, we split
\begin{equation*}
T_{xy} \;=\; \ol T_{y} + (T_{xy} - \ol T_y)\,,
\end{equation*}
where we defined the averaged vector
$$
   \ol T_y \;\deq\; \frac{1}{N} \sum_{x}T_{xy} \;=\; \frac{1}{N}\sum_{i} |G_{iy}|^2 \;=\; 
 \frac{1}{N\eta}  \im  G_{yy}\,;
$$
here the last step follows easily by spectral decomposition of $G$.
We can use the local semicircle law, Lemma~\ref{lm:lsc}, to get
\be\label{barTbound}
   \ol T_y \;=\; \frac{\im m}{N\eta} \qbb{1 + O_\prec \pbb{\frac{1}{\sqrt{M\eta}}}}\,.
\ee
It is instructive to perform the same averaging with the deterministic profile $\Theta$:
\begin{equation} \label{total mass 3}
\frac{1}{N} \sum_x \Tdet_{xy} \;=\; \pbb{\Pi \, \frac{\abs{m}^2 S}{1 - \abs{m}^2 S}}_{yy} \;=\; \frac{1}{N} \frac{\abs{m^2}}{1 - \abs{m}^2} \;=\; \frac{\im m}{N \eta}\,,
\end{equation}
where in the last step we used \eqref{id m im m}.

Having dealt with the component $\Pi T$ in \eqref{barTbound}, we devote the rest of this section to the component $\ol \Pi T$. The following proposition contains the main result of this section.

\begin{proposition}\label{YleqT}
Suppose that $\Lambda\prec \Psi$ for some admissible control parameter $\Psi$. Then we have for all $y$
\be\label{kjyT}
   T_{xy} \;=\; \ol T_y + |m|^2 \pbb{ \frac{ S-\Pi}{1- |m|^2 S}}_{xy} + \wt\cE_{xy}\,,
\ee
where the error satisfies
\be\label{EER}
\max_{x,y} |\wt\cE_{xy}|   \;\prec\; \frac{1}{\eta+\pb{\frac{W}{L}}^2} \pb{\Psi^4+\Psi^2M^{-1/2}}\,.
\ee
\end{proposition}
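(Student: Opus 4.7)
The plan is to solve \eqref{Tself} by splitting the first index of $T$ according to the direction of the top eigenvector of $S$ (the constant mode, spanned by $\Pi$) and its orthogonal complement, where the operator $1-|m|^2 S$ enjoys a much better spectral gap. The $\Pi$-component of the $y$-th column of $T$ is precisely the constant vector with entries $\ol T_y$, which is already controlled via the local semicircle law by \eqref{barTbound}. Hence the task reduces to solving the self-consistent equation on $\ran\ol\Pi$.

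Since $\Pi$ commutes with $S$ (as $S\mathbf 1=\mathbf 1$ by \eqref{stoch} and $S=S^*$), applying $\ol\Pi$ to \eqref{Tself} yields the closed equation
\begin{equation*}
(1-|m|^2 S)(\ol\Pi T_{\cdot y}) \;=\; |m|^2(S-\Pi)e_y \,+\, \ol\Pi\cE_{\cdot y}
\end{equation*}
on $\ran\ol\Pi$. Letting $R$ denote the inverse of $1-|m|^2 S$ on $\ran\ol\Pi$ (extended by zero on $\ran\Pi$), a short spectral-decomposition check identifies $R(S-\Pi)$ with $(S-\Pi)/(1-|m|^2 S)$ as operators on $\mathbb C^N$ (both vanish on $\ran\Pi$ and coincide on $\ran\ol\Pi$), which gives \eqref{kjyT} with remainder $\wt\cE_{xy}=(R\ol\Pi\cE)_{xy}$. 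The operator-norm bound $\|R\|_{\ran\ol\Pi\to\ran\ol\Pi}\leq C/(\eta+(W/L)^2)$ then follows from $1-|m|^2\asymp\eta$ (see \eqref{m3}) together with the spectral gap $\lambda_2(S)\leq 1-c(W/L)^2$. The latter is a standard Fourier computation on the torus: modulo lower-order corrections, $\wh S(p)\approx\wh f(Wp)$, and smoothness of $f$ together with $\wh f(0)=1$ forces $1-\wh S(p)\geq c(Wp)^2$ for small $p$, so that $1-\lambda_k\geq c(W/L)^2$ for every $k\geq 2$, the smallest nonzero frequency on the torus being $2\pi/L$ in each dimension.

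The main technical step is converting this operator-norm control of $R$ into the pointwise bound \eqref{EER} on $\wt\cE_{xy}$. Tracing through the proof of Theorem~\ref{thm:T}, I would split $\cE_{xy}=\wt T_{xy}+\cE'_{xy}$, where $\wt T_{xy}=\sum_i s_{xi}Q_i|G_{iy}|^2$ is the pure fluctuation from \eqref{fluc1T}--\eqref{flucT}, and $\cE'_{xy}$ collects the remaining deterministic-type contributions, each bounded entrywise by $\Psi^4+\Psi^2 M^{-1/2}$. For the fluctuation piece I would rewrite
\begin{equation*}
\sum_a R_{xa}\,\wt T_{ay} \;=\; \sum_i (RS)_{xi}\,Q_i|G_{iy}|^2
\end{equation*}
and invoke the Fluctuation Averaging Theorem \cite[Theorem~4.8]{EKY2} with the generalized weight $(RS)_{x\cdot}$. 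This furnishes two additional factors of $\Psi$ from the $Q_i$'s (analogous to the second bound in \eqref{GGwithQ}), multiplied by the relevant norm of the weight, which the spectral-gap estimate controls by $C/(\eta+(W/L)^2)$. The deterministic remainder $\cE'$ is handled by a Cauchy--Schwarz argument using the $\ell^2$ operator norm of $R$ and yields the same bound.

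The hard part will be verifying admissibility of $(RS)_{x\cdot}$ in the sense of \cite[Definition~4.4]{EKY2} with the sharp prefactor: a naive use of $\|\Theta\|_1\sim 1/\eta$ misses the improvement to $(\eta+(W/L)^2)^{-1}$. To recover the sharp factor I would use the decomposition $R=\ol\Pi+\ol\Pi\Theta\ol\Pi$ together with the pointwise profile bound $\Theta_{xy}\leq C\Upsilon_{xy}$ from Proposition~\ref{prop:profile}, and exploit the fact that the restriction to $\ran\ol\Pi$ subtracts the constant-in-$x$ portion of $\Theta$, which is exactly the piece responsible for the slow $\eta^{-1}$ decay of $\|\Theta\|_1$. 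Once this admissibility is established, combining the fluctuation-averaging bound with the $\cE'$ estimate produces \eqref{EER}.
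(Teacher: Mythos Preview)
Your setup is correct and matches the paper: apply $\ol\Pi$ to \eqref{Tself}, solve on $\ran\ol\Pi$, and obtain $\wt\cE = (1-|m|^2 S)^{-1}\ol\Pi\,\cE$. The $\ell^2$ operator-norm bound you state is also right (it is \eqref{2to2}). Where you diverge is in the final step, and you make it much harder than necessary.

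The paper does not re-enter the fluctuation averaging machinery at all. Lemma~\ref{lm:invert} also provides an $\ell^\infty\to\ell^\infty$ bound \eqref{inftytoinfty}, namely $\norm{(1-|m|^2 S)^{-1}\ol\Pi}_\infty \leq C(\log N)/(\eta+(W/L)^2)$. With this in hand the proof of \eqref{EER} is one line:
\[
\max_{x,y}|\wt\cE_{xy}| \;\leq\; \normbb{\frac{1}{1-|m|^2 S}\,\ol\Pi}_\infty\,\max_{x,y}|\cE_{xy}|\,,
\]
and the entrywise bound \eqref{ceerror} on $\cE$ from Theorem~\ref{thm:T} finishes (the $\log N$ is absorbed by $\prec$). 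The $\ell^\infty$ bound itself is proved by a short trick: split the Neumann series $\sum_{k\geq 0}|m|^{2k}(S\ol\Pi)^k$ at $k_0 \asymp (\log N)/[\eta+(W/L)^2]$; for $k<k_0$ use $\|S\|_\infty\leq 1$ (stochasticity) to get a contribution $\leq k_0\|\bv\|_\infty$; for $k\geq k_0$ pass to $\ell^2$ via $\|\cdot\|_\infty\leq\|\cdot\|_2\leq\sqrt N\|\cdot\|_\infty$ and use \eqref{2to2}, the choice of $k_0$ killing the $\sqrt N$.

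Your proposed route---rewriting $(R\ol\Pi\cE)_{xy}$ and re-applying the Fluctuation Averaging Theorem with weight $(RS)_{x\cdot}$---requires verifying that this weight is admissible in the sense of \cite[Definition~4.4]{EKY2} with the sharp prefactor, which you yourself flag as the hard part and do not carry out. Your sketch for this relies on Proposition~\ref{prop:profile}, which is stated only for $d=1$ with rapid decay, whereas Proposition~\ref{YleqT} must hold in the general $d$-dimensional setting. So as written there is a genuine gap, and more to the point the detour is unnecessary: the $\ell^\infty\to\ell^\infty$ bound bypasses it entirely.
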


The main tool in the proof of Proposition \ref{YleqT} is a control on the spectral gap of $S$ on the space orthogonal to $\f e$. In order to state it, we need the Euclidean matrix norm $\|A\|$ as well as the $\ell^\infty\to\ell^\infty$ norm of the matrix $A$,
$$
  \norm{A}_{\infty}  \;\deq\; \max_i \sum_{j} |A_{ij}|\,.
$$ 
The following lemma shows that $S$ has a spectral gap of order $(W/L)^2$ when restricted to the space orthogonal to $\f e$.
Its proof is postponed to the end of this section.
\begin{lemma}\label{lm:invert}
We have the bounds
\be\label{2to2}
    \normbb{  \frac{1}{1- |m|^2 S} \, \ol \Pi}
   \;\le\; \frac{C}{\eta+ \pb{\frac{W}{L}}^2}
\ee
and
\be\label{inftytoinfty}
    \normbb{ \frac{1}{1- |m|^2 S} \, \ol \Pi}_{\infty}
   \;\le\; \frac{C\log N}{\eta+ \pb{\frac{W}{L}}^2}\,.
\ee
\end{lemma}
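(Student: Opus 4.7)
The plan is to diagonalize $S$ via the discrete Fourier transform on $\bb T_L^d$. By \eqref{sij}, $S$ is translation invariant, so its eigenvectors are the Fourier modes $\phi_p(x) \deq L^{-d/2} e^{\ii p \cdot x}$ indexed by $p \in \frac{2\pi}{L}\bb T_L^d$, with real eigenvalues $\hat S(p) \deq \sum_u s_{0u} e^{-\ii p\cdot u}$ (real by symmetry of $f$). The top eigenvalue is $\hat S(0) = 1$ with eigenvector $\f e = \phi_0$, and $\bar\Pi$ projects onto the orthogonal complement spanned by $\{\phi_p : p \neq 0\}$. Both bounds thus reduce to uniform control of $(1 - |m|^2 \hat S(p))^{-1}$ over the nonzero dual-torus momenta.

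For \eqref{2to2}, I would use \eqref{low-p exp} to get $1 - \hat S(p) \geq c W^2 |p|^2$ for $W|p|$ small, and then note that the smallest nonzero torus momentum, $|p| \asymp 1/L$, already forces $1 - \hat S(p) \gtrsim (W/L)^2$. For $W|p| \gtrsim 1$, the bound $\hat S(p) \leq 1 - c$ holds; this follows from the relation $\hat S(p) \approx \hat f(Wp)$, the smoothness of the probability density $f$, and \eqref{Slow}. Writing
\begin{equation*}
1 - |m|^2 \hat S(p) \;=\; (1 - |m|^2) + |m|^2\pb{1 - \hat S(p)}
\end{equation*}
and bounding each nonnegative summand below by $c\eta$ (via \eqref{m3}) and by $c\,\min\pb{(W|p|)^2, 1}$ respectively, I obtain $|1 - |m|^2 \hat S(p)| \gtrsim \eta + (W/L)^2$ uniformly for $p \neq 0$, which yields \eqref{2to2}.

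For \eqref{inftytoinfty} the key algebraic observation is $S^k \Pi = \Pi$ (valid since $S$ is stochastic), which gives the geometric expansion
\begin{equation*}
(1 - |m|^2 S)^{-1}\bar\Pi \;=\; \sum_{k \geq 0} |m|^{2k} \pb{S^k - \Pi}.
\end{equation*}
Hence $\|(1 - |m|^2 S)^{-1}\bar\Pi\|_\infty \leq \sum_{k \geq 0} |m|^{2k} \|S^k - \Pi\|_\infty$, where $\|S^k - \Pi\|_\infty = \sum_y |(S^k)_{0y} - 1/N|$ is the total-variation distance between the $k$-step distribution of the random walk driven by $S$ and the uniform distribution on $\bb T_L^d$. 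I would split the sum at the mixing time $T \asymp (L/W)^2$: for $k \leq T$ the trivial bound $\|S^k - \Pi\|_\infty \leq 2$ together with geometric summation contributes $\lesssim \min(T, 1/(1-|m|^2)) \asymp 1/(\eta + (W/L)^2)$; for $k > T$ the spectral gap from the previous paragraph upgrades via a Nash-type on-diagonal heat-kernel bound to an exponential $\ell^\infty$-mixing estimate $\|S^k - \Pi\|_\infty \lesssim e^{-ck/T}$, whose geometric sum is again of order $1/(\eta + (W/L)^2)$.

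The main obstacle is the $\ell^2 \to \ell^\infty$ upgrade at the mixing-time boundary: the spectral gap of $S$ on $\bar\Pi\bb{C}^N$ directly bounds only the $\ell^2$ operator norm of $S^k - \Pi$, while a pointwise heat-kernel estimate is required for the $\ell^\infty \to \ell^\infty$ row-sum norm. This transfer is classical but technical, and I expect it to be the source of the multiplicative $\log N$ factor in \eqref{inftytoinfty}, which would enter naturally either from a Sobolev-type embedding in the Nash argument or from a logarithmic boundary correction when matching the pre-mixing and post-mixing sums near $k = T$.
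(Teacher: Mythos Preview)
Your argument for \eqref{2to2} is essentially the paper's: diagonalize $S$ by Fourier, use the quadratic lower bound $1-\hat S(p)\gtrsim (W/L)^2$ for $p\neq 0$ together with $1-|m|^2\gtrsim\eta$, and sum the geometric series.

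For \eqref{inftytoinfty} your route diverges from the paper's, and the paper's is considerably simpler. You propose splitting the series $\sum_k|m|^{2k}(S^k-\Pi)$ at the mixing time $T\asymp(L/W)^2$ and then invoking a Nash-type heat-kernel bound to get an $\ell^\infty$ mixing estimate $\|S^k-\Pi\|_\infty\lesssim e^{-ck/T}$ for $k>T$. That would work, but the heat-kernel step is real work and is unnecessary here. The paper instead splits at $k_0\deq C(\log N)\bigl[\eta+(W/L)^2\bigr]^{-1}$: for $k<k_0$ it uses only the trivial bound $\|S\|_\infty\le 1$ (so $\|S^k\f v\|_\infty\le\|\f v\|_\infty$ for $\f v\perp\f e$), which already contributes $k_0\|\f v\|_\infty$ and is the origin of the $\log N$; for $k\ge k_0$ it simply passes from $\ell^\infty$ to $\ell^2$ at the cost of a factor $\sqrt N$, then applies the spectral-gap bound $\|S\ol\Pi\|\le 1-c\bigl[\eta+(W/L)^2\bigr]$ established in the first part. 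The choice of $k_0$ is exactly what makes $\sqrt N\,\bigl(|m|^2\|S\ol\Pi\|\bigr)^{k_0}$ bounded. No mixing or heat-kernel input beyond the $\ell^2$ gap is needed, and the $\log N$ arises not from any Sobolev embedding but directly from absorbing the crude $\sqrt N$ loss in the $\ell^\infty\to\ell^2$ comparison.
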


In the regime of complete delocalization, $\eta\le (W/L)^2$, 
 the control on $(1-|m|^2S)^{-1}$ is stronger on the
space orthogonal to $\f e$. Indeed, in that regime the bound \eqref{2to2} is better than the trivial bound
\be
\normbb{ \frac{1}{1- |m|^2 S}} \;=\; \frac{1}{1-|m|^2} \;\le\; \frac{C}{\eta}
\label{bignorm}
\ee
from \eqref{m3}.

\begin{proof}[Proof of Proposition \ref{YleqT}]
Multiplying \eqref{Tself} by $\ol \Pi$ from the left yields
\begin{equation*}
\ol \Pi T \;=\; |m|^2 S \, \ol \Pi T + |m|^2 (S - \Pi) + \ol \Pi \cE  \,,
\end{equation*}
where we used that $S \Pi = \Pi S = \Pi$. Therefore
\begin{equation*}
\ol \Pi T \;=\; \abs{m}^2 \frac{S - \Pi}{1 - \abs{m}^2 S} + \wt \cE \,, \qquad \wt \cE \;\deq\; \frac{1}{1 - \abs{m}^2 S} \ol \Pi \cE\,.
\end{equation*}
Note that $(\ol \Pi T)_{xy} = T_{xy} - \ol T_y$.
Using \eqref{inftytoinfty} we therefore get \eqref{kjyT} whose error term satisfies
\begin{equation*}
\max_{x,y} \abs{\wt \cE_{xy}} \;\leq\; \normbb{ \frac{1}{1- |m|^2 S} \, \ol \Pi}_{\infty} \max_{x,y} \abs{\cE_{xy}} \;\prec\; \frac{1}{\eta + \pb{\frac{W}{L}}^2} \pb{\Psi^4+\Psi^2M^{-1/2}}\,.
\end{equation*}
This completes the proof of \eqref{kjyT} and \eqref{EER}.
\end{proof}

Next, we estimate $|G_{ij}-\delta_{ij} m|^2$ in terms of $T_{ij}$. In other words, we derive pointwise estimates on $G_{ij}$ from estimates on the \emph{averaged} quantity $T_{xy}$. This gives rise to an improved bound on $\Lambda$, which we may plug back into Proposition \ref{YleqT}. Thus we get a self-improving scheme which may be iterated.

\begin{lemma}\label{lm:contr}
Suppose that $\Lambda\prec \Psi$ with some admissible control parameter $\Psi$ and $T_{ij}\prec \Omega_{ij}^2$ for a family of admissible control parameters $\Omega_{ij}$ indexed by a pair $(i,j)$ (see Definition \ref{def:adm ctprm}). 
Then
\be\label{GT}
\absb{G_{ij}-\delta_{ij}m}^2 \;\prec\; \Omega_{ij}^2  +\Psi^4 +\delta_{ij} \sum_{k}\Omega_{ik}^2s_{ki}\,.
\ee
(Here we write $\Omega_{ij}^2 \deq (\Omega_{ij})^2$.)
\end{lemma}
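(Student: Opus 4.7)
The plan is to treat the cases $i\ne j$ and $i=j$ separately, in both cases using the resolvent identities of Lemma~\ref{lemma: res id} to represent $G_{ij}-\delta_{ij}m$ as a (centered) linear or quadratic expression in the entries of the $i$-th row of $H$ and matrix entries of the minor $G^{(i)}$, then applying a standard large deviation estimate for quadratic forms in independent entries, and finally converting $G^{(i)}$-entries back to $G$-entries so that the quantity $T_{ij}$ (bounded by $\Omega_{ij}^2$) appears.

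For $i\ne j$, I would start from the Family~B identity \eqref{res exp 2b}, writing $|G_{ij}|^2 = |G_{ii}|^2 \, \absb{\sum_k^{(i)} h_{ik}G^{(i)}_{kj}}^2$. Using $|G_{ii}|\prec 1$ from \eqref{simple estimate of Gii} and the large deviation estimate (Lemma~\ref{lm:MZ}, as cited in \eqref{1sideldeT}), the right-hand side is $\prec \sum_k^{(i)} s_{ik}|G^{(i)}_{kj}|^2$. The Family~A identity \eqref{resolvent expansion type 1} combined with the off-diagonal bound $|G_{ki}||G_{ij}|\prec \Psi^2$ gives $|G^{(i)}_{kj}|^2 \prec |G_{kj}|^2 + \Psi^4$ for $k\ne i$, so $|G_{ij}|^2 \prec \sum_k^{(i)} s_{ik}|G_{kj}|^2 + \Psi^4 = T_{ij} - s_{ii}|G_{ij}|^2 + \Psi^4$. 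The term $s_{ii}|G_{ij}|^2$ is $\prec M^{-1}\Psi^2\le \Psi^4$ by admissibility of $\Psi$, so we get $|G_{ij}|^2 \prec \Omega_{ij}^2 + \Psi^4$ as desired.

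For $i=j$, I would invoke \eqref{Giishort}, which yields $|G_{ii}-m|^2 \prec |Z_i|^2 + \Psi^4 + M^{-1}$. Splitting $Z_i = \sum_{k\ne l}^{(i)} h_{ik}G^{(i)}_{kl}h_{li} + \sum_k^{(i)}(|h_{ik}|^2 - s_{ik})G^{(i)}_{kk}$, the off-diagonal piece is a quadratic form in independent entries, so its square is $\prec \sum_{k\ne l}^{(i)} s_{ik}s_{il}|G^{(i)}_{kl}|^2$. Replacing $G^{(i)}$ by $G$ via \eqref{resolvent expansion type 1} as before produces $\sum_l s_{il}T_{il} + \Psi^4 \prec \sum_l s_{il}\Omega_{il}^2 + \Psi^4$ (using the hypothesis $T_{il}\prec \Omega_{il}^2$ together with Lemma~\ref{lemma: basic properties of prec}(i)). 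The diagonal piece has variance $\prec \sum_k s_{ik}^2 \prec M^{-1}$, hence squared magnitude $\prec M^{-1}$. Since $\Omega_{ik}$ is admissible we have $\Omega_{ik}^2\ge M^{-1}$, so by stochasticity of $S$, $M^{-1} = M^{-1}\sum_k s_{ki} \le \sum_k \Omega_{ik}^2 s_{ki}$, absorbing the $M^{-1}$ term. By symmetry of $S$ we rewrite $s_{il}=s_{li}$, yielding $|G_{ii}-m|^2 \prec \sum_k s_{ki}\Omega_{ik}^2 + \Psi^4$, which is exactly the claimed bound.

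I expect the main technical nuisance to be the bookkeeping around the substitution $G^{(i)}\mapsto G$: one must check that the extra terms generated by \eqref{resolvent expansion type 1} are either negligible (when $k\ne i$, giving a $\Psi^4$ error) or absorbed into the missing summand $k=i$ (recovering $T_{ij}$ from $\sum_k^{(i)} s_{ik}|G_{kj}|^2$ at a cost $\prec M^{-1}\Psi^2\le \Psi^4$). The estimates themselves are routine; the subtlety is to see that every auxiliary error — $M^{-1}$ from the diagonal deviation term, $M^{-1}\Psi^2$ from restoring the $k=i$ summands, and $\Psi^4$ from the Family~A corrections — is dominated by one of the two terms $\Psi^4$ or $\sum_k s_{ki}\Omega_{ik}^2$ on the right-hand side of \eqref{GT}, which requires repeated use of the admissibility lower bound $\Omega_{ik}\ge M^{-1/2}$.
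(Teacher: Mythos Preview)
Your proposal is correct and follows essentially the same route as the paper: Family~B plus a linear large deviation bound for the off-diagonal case, and \eqref{Giishort} with the diagonal/off-diagonal splitting of $Z_i$ plus a quadratic large deviation bound for the diagonal case, converting $G^{(i)}$ back to $G$ via \eqref{resolvent expansion type 1} and absorbing all auxiliary $M^{-1}$-type errors into $\sum_k s_{ki}\Omega_{ik}^2$ via admissibility. The paper invokes Theorem~\ref{thm: LDE} rather than Lemma~\ref{lm:MZ} directly, but this is the same tool, and the bookkeeping you describe matches the paper's (the paper even omits the details of the $G^{(i)}\to G$ substitution in the quadratic form, which you spell out).
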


\begin{proof} We fix the index $j$ throughout the proof. Let first $i \neq j$. Then \eqref{res exp 2b} gives
\begin{equation} \label{Gij iteration}
G_{ij} \;=\; G_{ii}  \sum_{k}^{(i)} h_{ik} G_{kj}^{(i)}\,.
\end{equation}
We shall use the large deviation bounds from Theorem \ref{thm: LDE} to estimate the sum. For that we shall need a bound on 
\begin{equation} \label{Gkj for Omega}
\sum_{k}^{(i)} s_{ik} |G_{kj}^{(i)}|^2  \;=\;
 \sum_{k}^{(i)} s_{ik}  \Big( |G_{kj}|^2
 + O_\prec \pb{|G_{ki}G_{ij}|^2 }\Big) 
\;=\;  T_{ij} - s_{ii} \abs{G_{ij}}^2 + O_\prec\pb{\Psi^4}  \;\prec\; \Omega_{ij}^2+ \Psi^4\,,
\end{equation}
where in the first step we used \eqref{resolvent expansion type 1} and \eqref{simple estimate of Gii}.
Since $G_{ii} \prec 1$, we get from \eqref{Gij iteration} and Theorem \ref{thm: LDE} (i) that
\begin{equation*}
\abs{G_{ij}}^2 \;\prec\; \Omega_{ij}^2 + \Psi^4\,.
\end{equation*}

To estimate $G_{ii}-m$, we use \eqref{Giishort}
to get
\be\label{diagprecT}
  |G_{ii} - m|^2 \;\leq\; C  |Z_i|^2  + O_\prec(\Psi^4 + M^{-1}) 
  \;\prec\; \sum_k \Omega_{ik}^2s_{ki} + \Psi^4 + M^{-1}\,.
\ee
Here we used
\begin{equation*}
\abs{Z_i}^2 \;\leq\; \absbb{\sum_{k}^{(i)} \pb{\abs{h_{ik}}^2 - s_{ik}} G^{(i)}_{kk}}^2 + \absbb{\sum_{k \neq l}^{(i)} h_{ik} G^{(i)}_{kl} h_{li}}^2 \;\prec\; M^{-1} + \sum_k \Omega_{ik}^2 s_{ki}\,,
\end{equation*}
where in the second step we used Theorem \ref{thm: LDE} (i) and (ii), with the bounds $G_{kk}^{(i)} \prec 1$ and
\begin{equation*}
\sum_{k \neq l}^{(i)} s_{ik} \absb{G_{kl}^{(i)}}^2 s_{li} \;\prec\; \sum_k \Omega_{ik}^2 s_{ki} + \Psi^4\,.
\end{equation*}
This last estimate follows along the lines of \eqref{Gkj for Omega}, whereby the error terms resulting from the removal of the upper indices are estimated by Cauchy-Schwarz; we omit the details. Finally $M^{-1}$ can be absorbed into $ \sum_k \Omega_{ik}^2s_{ki}$ by admissibility of $\Omega_{ij}$.
\end{proof}

We may now combine Proposition \ref{YleqT} and Lemma \ref{lm:contr} in an iterative self-improving scheme, which results in an improved bound on $\Lambda$. 

\begin{corollary}\label{cor:LO}  Suppose that $\Lambda \prec \Psi$
and $T_{ij} \prec \Omega^2$ for all $i$ and $j$, where $\Psi$ and $\Omega$ are admissible control parameters. Then
\be\label{LO}
   \Lambda^2 \;\prec\; \Omega^2\,.
\ee
\end{corollary}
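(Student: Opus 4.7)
The strategy is a short bootstrap based on Lemma~\ref{lm:contr}: use the pointwise bound from Lemma~\ref{lm:contr} with constant profile $\Omega_{ij} \equiv \Omega$ to improve the hypothesis $\Lambda \prec \Psi$ step by step, doubling the exponent of $\Psi$ at each step until $\Psi^4$ drops below $\Omega^2$. Since the number of iterations depends only on the fixed parameter $\gamma$ (and not on $N$), the whole procedure collapses into a finite application of Lemma~\ref{lm:contr}.

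First I would substitute $\Omega_{ij} = \Omega$ and $\Psi_0 = \Psi$ into \eqref{GT}. Using $\sum_k s_{ki} = 1$ from \eqref{stoch}, the diagonal contribution $\delta_{ij}\sum_k \Omega_{ik}^2 s_{ki}$ is absorbed into $\Omega^2$, yielding
\[
\absb{G_{ij} - \delta_{ij} m}^2 \;\prec\; \Omega^2 + \Psi_0^4\,,
\]
uniformly in $i,j$. Taking the maximum gives $\Lambda \prec \Psi_1$ with $\Psi_1 \deq \Omega + \Psi_0^2$. I would then check that $\Psi_1$ is again admissible: the lower bound $\Psi_1 \geq \Omega \geq M^{-1/2}$ is automatic, and the upper bound follows from $\Psi_0^2 \leq M^{-\gamma}$ and $\Omega \leq M^{-\gamma/2}$, giving $\Psi_1 \leq 2 M^{-\gamma/2}$ (the harmless constant $2$ is absorbed by adjusting $\gamma$, or equivalently by the fact that $\prec$ ignores constants).

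Iterating this procedure, I would define $\Psi_{n+1} \deq \Omega + \Psi_n^2$ and show by induction that $\Lambda \prec \Psi_n$ and $\Psi_n$ is admissible at every step. The key observation is that the worst-case exponent doubles: if $\Psi_n \leq M^{-\gamma_n / 2}$ then $\Psi_n^2 \leq M^{-\gamma_n}$, so
\[
\Psi_{n+1} \;\leq\; \Omega + M^{-\gamma_n}\,,
\]
and once $\gamma_n \geq 1$ we have $\Psi_n^2 \leq M^{-1} \leq \Omega^2$, hence $\Psi_{n+1} \leq 2\Omega$. Starting from $\gamma_0 = \gamma$, the recursion $\gamma_{n+1} = 2\gamma_n$ reaches $\gamma_n \geq 1$ after $n_0 = \lceil \log_2(1/\gamma) \rceil$ steps, which is a fixed finite number independent of $N$. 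Applying Lemma~\ref{lm:contr} exactly $n_0$ times therefore yields $\Lambda \prec \Omega$, i.e.\ $\Lambda^2 \prec \Omega^2$.

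No genuine obstacle is expected: the only point requiring a modicum of care is verifying that $\Psi_n$ remains admissible throughout the iteration (so that Lemma~\ref{lm:contr} may be reapplied), and that the number of iterations is $N$-independent. Both follow from the admissibility bounds $M^{-1/2} \leq \Omega, \Psi \leq M^{-\gamma/2}$ with the same fixed $\gamma > 0$.
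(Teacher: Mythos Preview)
Your proposal is correct and follows essentially the same route as the paper: apply Lemma~\ref{lm:contr} with the constant profile $\Omega_{ij}=\Omega$, obtain $\Lambda^2 \prec \Omega^2 + \Psi^4$, then iterate so that the $\Psi$-exponent doubles at each step, terminating after $O(\abs{\log\gamma})$ steps once $\Psi^{2^k}\leq \Omega^2$. Your explicit check that each $\Psi_n$ remains admissible is a point the paper leaves implicit, but otherwise the arguments coincide.
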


\begin{proof}
We apply Lemma~\ref{lm:contr} to the constant control 
parameter $\Omega_{ij} = \Omega$ for each $i,j$. Thus, suppose that $T_{ij} \prec \Omega^2$ for all 
$i,j$, Lemma \ref{lm:contr} yields
\begin{equation*}
\Lambda^2 \;\prec\; \Psi^2 \qquad \Longrightarrow \qquad \Lambda^2 \;\prec\; \Omega^2 + \Psi^4\,.
\end{equation*}
Now we can iterate this estimate, $\Omega^2 +\Psi^4$ taking the role
of $\Psi^2$ in controlling $\Lambda^2$. Thus after one iteration we get
$$
  \Lambda^2 \;\prec\; \Omega^2 + (\Omega^2 +\Psi^4)^2 \;\prec\;  \Omega^2 +\Psi^8\,.
$$
After $k$ iterations we get $\Lambda^2 \prec \Omega^2 + \Psi^{2^k}$.
Since $\Omega$ and $\Psi$ are admissible, we have $\Psi^{2^k}\prec \Omega^2$ for $k\sim |\log \gamma|$. This completes
the proof.
\end{proof}

What remains is the proof of Lemma \ref{lm:invert}, which relies on Fourier transformation. We introduce the dual lattice of $\bb T \equiv \bb T_L^d$,
\begin{equation*}
P \;\equiv\; P_L^d \;\deq\; \frac{2 \pi}{L} \, \bb T_L^d\,.
\end{equation*}
For $p \in \R^d$ define
\begin{equation} \label{SP}
\wh S(p) \;\deq\; \sum_{x \in \bb T} \me^{-\ii p \sdot x} \, s_{x0} \;=\; \sum_{x \in \bb T} \me^{-\ii p \sdot x} \frac{1}{Z_{L,W}} f \pbb{\frac{x}{W}}\,.
\end{equation}
In particular, if $p \in P$ then $\wh S(p)$ is the discrete Fourier transform of $s_{x0}$.
Since $s_{xy}$ is translation invariant and $L$-periodic, we get for all $x,y \in \bb T$ that
\begin{equation*}
s_{xy} \;=\; s_{[x - y]_L 0} \;=\; \frac{1}{N} \sum_{p \in P} \me^{\ii p \sdot (x - y)} \, \wh S(p)\,.
\end{equation*}

\begin{proof}[Proof of Lemma \ref{lm:invert}]
First we show that for large enough $L$ the Euclidean matrix norm satisfies
\be
  \norm{S \ol \Pi} \;\le\; 1 - c_1 \pbb{\frac{W}{L}}^2
\label{Snorm}
\ee
with some positive constant $c_1$ depending on the profile $f$. Since the matrix entries $s_{ij}$ are translation invariant (see \eqref{sij}), it is sufficient
to compute its Fourier transform as defined in \eqref{SP}. Using the property $\wh{Su}(p) = \wh S(p) \wh u(p)$, the fact that $\wh \Pi(p) = \delta_{p0}$, and Plancherel's identity, we find
\begin{equation} \label{norm S Pi}
\norm{S \ol \Pi} \;=\; \max\hb{ \abs{\wh S(p)} \col p \in P \setminus \{0\}} \;\le\; 1 - c_1 \pbb{\frac{W}{L}}^2\,.
\end{equation}
The last step follows easily from $\wh S(p)\ge -1+\delta$ (recall \eqref{Slow}) and the representation
\begin{equation*}
1 - \wh S(p) \;=\; \sum_{x \in \bb T} \pb{1 - \cos(p \sdot x)} \frac{1}{Z_{L,W}} f \pbb{\frac{x}{W}} \;\geq\; c
\sum_{x \in \bb T} (p\sdot x)^2 \frac{1}{Z_{L,W}} f \pbb{\frac{x}{W}} \;\geq\; c_1 \pbb{\frac{W}{L}}^2\,,
\end{equation*}
where in the second step we used $\abs{p\sdot x} \leq \pi$, and in the last step $\abs{p} \geq 2 \pi / L$.

From \eqref{m3} we get $1-|m|^2 \ge c\eta$, which, combined with \eqref{norm S Pi}, yields
\begin{equation} \label{m2 S Pi}
\abs{m}^2 \norm{S \ol \Pi} \;\leq\; 1 - c \pbb{\frac{W}{L}}^2 - c \eta\,.
\end{equation}
Thus we get
$$
   \normbb{\frac{1}{1- |m|^2 S}\, \ol \Pi\,} \;\le\; \sum_{k = 0}^\infty \abs{m}^{2k} \norm{S \ol \Pi}^k \;\leq\;
\frac{C}{\eta + \pb{\frac{W}{L}}^2}\,.
$$   
This is \eqref{2to2}.

In order to prove \eqref{inftytoinfty}, we first observe that $\|S\|_{\infty}\le 1$ as follows from the estimate
$$
   \max_{i} |(S\bv)_{i}| \;\le\; \max_i \absbb{ \sum_{x} s_{ix} v_{x} } \;\le\; \max_{x} |v_{x}|\,,
$$
where $\f v = (v_i)$ is an arbitrary vector.
Thus, for any vector $\bv$ satisfying $\langle \bv, \f e\rangle =0$ any $k_0 \in \N$ we have
\begin{align*} 
\normbb{ \frac{1}{1- |m|^2 S} \, \bv}_\infty
  & \;\le\; \sum_{k=0}^{k_0-1} |m|^{2k} \big\| S^k \bv \big\|_\infty 
  +  \sum_{k=k_0}^\infty |m|^{2k} \big\|  S^k \bv\big\|_2
\\
&\;\le\; k_0 \|\bv \|_\infty +   \sum_{k=k_0}^\infty |m|^{2k}\| S \ol \Pi\|^{k} \|\bv\|_2 
\\
&\;\le\; k_0 \|\bv\|_\infty +   \sqrt{N}\frac{|m|^{2k_0}\| S \ol \Pi\|^{k_0}}{1-|m|^2\|S \ol \Pi\|} \|\bv \|_\infty\,,
\end{align*}
where we used the bound $\norm{\f v}_\infty \leq \norm{\bv}_2 \le \sqrt{N} \norm{\f v}_\infty$ and \eqref{m is bounded}.
Choosing $k_0 = C(\log N)[\eta +(W/L)^2]^{-1}$ with a sufficiently large constant $C$,
we obtain \eqref{inftytoinfty} exactly as above using the bound \eqref{m2 S Pi}.
This completes the proof of Lemma~\ref{lm:invert}.
\end{proof}

\section{Delocalization bounds}\label{sec:delo}
In this section we prove our main results -- Theorems \ref{lm:noprof}, \ref{lm:withprof}, and \ref{lm:noprof1}.
We return to the one-dimensional case,  $d = 1$, and continue to assume \eqref{decay of f}.
In particular, we write $N$ instead of $L$. The simple extension to higher dimensions is given in 
 Section~\ref{sec:gen}.

\subsection{Delocalization without profile: proof of Theorem \ref{lm:noprof}} \label{sec:noprof}
Suppose that $\Lambda\prec \Psi$ for some admissible control parameter $\Psi$. Then 
\eqref{kjyT} together with \eqref{EER}, \eqref{barTbound}, and \eqref{supexpl} yield
\be\label{asss}
  T_{ij} \;\prec\; \frac{1}{N\eta} + \frac{1}{W\sqrt{\eta}} +  \frac{1}{\eta+ \pb{\frac{W}{N}}^2}
\pb{\Psi^4+\Psi^2W^{-1/2}}
 \;\prec\; \Phi^2 +  \frac{1}{\eta+ \pb{\frac{W}{N}}^2}\pb{\Psi^4+\Psi^2W^{-1/2}}\,.
\ee
Recalling Corollary~\ref{cor:LO}, we have therefore proved
\be\label{stro}
\Lambda^2 \;\prec\; \Psi^2 \qquad \Longrightarrow \qquad \Lambda^2 \;\prec\; \Phi^2 +  \frac{N^2}{W^2}\pb{\Psi^4+\Psi^2W^{-1/2}}\,,
\ee
i.e. the upper bound $\Lambda^2\prec \Psi^2$ can be replaced with the stronger bound 
\eqref{stro}.

We can now iterate \eqref{stro}, exactly as in the proof of Corollary \ref{cor:LO}. We start the iteration with $\Psi_0 \deq (W \eta)^{-1/2}$; see Lemma \ref{lm:lsc}. Explicitly, the iteration reads
\begin{equation*}
\Psi_{k+1}^2 \;\deq\; \Phi^2 +  \frac{N^2}{W^2}\pb{\Psi_k^4+\Psi_k^2 W^{-1/2}}\,.
\end{equation*}
From \eqref{stro} and Lemma \ref{lm:lsc} we get that $\Lambda^2 \prec \Psi_k$ for any fixed $k$.

In order perform the iteration, we require
\be
   \frac{N^2}{W^2}\Psi_0^2 \;\ll\; 1 \qquad \text{and} \qquad  \frac{N^2}{W^2} W^{-1/2} \;\ll\; 1\,.
\label{init}
\ee
Thus we get the conditions $N \ll W^{5/4}$ and $\eta \gg N^{2} /W^3$. (Here we used \eqref{lower bound on W}). Satisfying these two conditions is the reason we need to impose the restriction on $W$ in Theorem \ref{lm:noprof}, Corollary \ref{cor:noprof}, and Theorem \ref{lm:withprof}. 
Using \eqref{init} and the fact that $\Phi$ is by definition admissible, it is now easy to see that there is a finite constant $k$, which depends on the implicit constants $c$ in $\ll$ and $\gg$ above, such that $\Psi_k^2 \leq C \Phi^2$. This concludes the proof of Theorem~\ref{lm:noprof}.

\subsection{Delocalization with profile: proof of Theorem~\ref{lm:withprof}}
By assumption we have $(W/N)^2 \leq \eta \leq 1$, so that in particular $\Phi^2 = W^{-1} \eta^{-1/2} \eqd \Psi^2$. Note that this $\Psi$ is admissible. From \eqref{decc} we get $\Lambda \prec \Psi$. Now observe that $\frac{\im m}{N \eta} = \Pi_{xy} \frac{\im m}{\eta}$ for all $x$ and $y$, as well as
\begin{equation*}
\frac{\im m}{\eta} \Pi + \abs{m}^2 \frac{S - \Pi}{1 - \abs{m^2} S} \;=\; \abs{m}^2 \frac{S}{1 - \abs{m^2} S}
\end{equation*}
by \eqref{id m im m} and the property $\Pi S = S \Pi = \Pi$.
Thus \eqref{kjyT} together with \eqref{EER} and  \eqref{barTbound} implies 
the first estimate of \eqref{Tprec},
since in the regime $\eta \ge (W/N)^2$ and $W^{5/4}\gg N$ the error term \eqref{EER} is bounded by
$$
\frac{1}{\eta}\pb{\Psi^4+\Psi^2W^{-1/2}} \;\le\; \frac{C}{N\eta}\,.
$$
The second estimate of \eqref{Tprec} follows from the first one and \eqref{mainE12T}.

Next, \eqref{Tfin} follows by using \eqref{Tdetbound} in \eqref{Tprec}.

Finally, using Lemma~\ref{lm:contr} with $\Omega_{ij}^2=\Upsilon_{ij}$ and
 $ \Psi\deq W^{-1/2}\eta^{-1/4}$,
 we obtain
\be\label{stro2}
\absb{G_{ij}-\delta_{ij}m}^2 \;\prec\; \Upsilon_{ij} +\Psi^4+ \delta_{ij}\sum_k \Upsilon_{ik} s_{ki}
 \;\prec\;  \Upsilon_{ij} .
\ee
Here we used that $\Psi^4$ can be absorbed into $ (N\eta)^{-1}\le \Upsilon_{ij}$ and in the last summation 
$\sum_k \Upsilon_{ik} s_{ki}$ can be
absorbed into $\Upsilon_{ii} \ge \frac{C}{W\sqrt{\eta}}$. This proves \eqref{Tfin1}, and hence concludes the proof of Theorem \ref{lm:withprof}.

\subsection{Delocalization with a small mean-field component: proof of Theorem~\ref{lm:noprof1}}
In order to prove Theorem \ref{lm:noprof1} we repeat the arguments from the previous sections almost to the letter. The self-consistent equations from Theorem~\ref{thm:T} remains unchanged except that $S_\e$ replaces $S$ in \eqref{Tself}.
The key observation is that, on the subspace orthogonal to $\f e$, 
the lower bound on $1-|m|^2S_\e$ is better than that on $1 - \abs{m}^2 S$. Indeed, using \eqref{m3} we get
$$
(1-|m|^2 S_\e) \ol \Pi \;=\; 1 - \abs{m}^2 (1 - \epsilon) S \ol \Pi \;\geq\; 1 - (1 - c \eta) (1 - \epsilon) S \ol \Pi
   \;\ge\; c(\eta +\e)
$$
with some positive constant $c$.
 This implies that \eqref{2to2} 
and \eqref{inftytoinfty} can be improved
to
\be\label{2to21}
    \normbb{ \frac{1}{1- |m|^2 S_\e} \, \ol \Pi}
   \;\le\; \frac{C}{\eta+\e+ \pb{\frac{W}{N}}^2}\,,
\qquad 
    \normbb{  \frac{1}{1- |m|^2 S_\e} \, \ol \Pi}_{\infty}
   \;\le\; \frac{C\log N}{\eta+\e+ \pb{\frac{W}{N}}^2}\,.
\ee

Suppose now that $\Lambda\prec\Psi$ for some admissible control parameter $\Psi$.
Then the statement of Proposition \ref{YleqT} is modified to
\be\label{kjyT1}
   T_{xy} \;=\; \ol T_y + |m|^2 \pbb{ \frac{ S_\e-\Pi}{1- |m|^2 S_\e}}_{xy} + \wt\cE_{xy}\,,
\ee
where the error term satisfies
\be\label{EER1}
\max_{x,y} |\wt\cE_{xy}|   \;\prec\; \frac{1}{\eta+\e+ \pb{\frac{W}{N}}^2} \pb{\Psi^4+\Psi^2W^{-1/2}}\,.
\ee
Notice that the Fourier transforms of $S$ and $S_\e$ (defined by \eqref{SP}) satisfy
\begin{equation} \label{S epsilon}
\wh S_\e(p)= (1-\e) \wh S(p) + \e \delta_{p0}.
\end{equation}
Thus we have
\begin{align}
 \pbb{\frac{ S_\e}{1-|m|^2 S_\e}}_{xy}
&\;=\; \frac{1}{N} \sum_{p\in P \col p\ne 0} 
\me^{\ii p(x-y)} \frac{(1-\e)\wh S(p) }{1-|m|^2(1-\e) \wh S(p)} + \frac{\im m}{\abs{m}^2 N \eta}
\notag \\ \label{Ssum1}
&\;=\; \frac{1}{N} \sum_{p\in P} 
\me^{\ii p(x-y)} \frac{(1-\e)\wh S(p) }{1-|m|^2(1-\e) \wh S(p)} + \frac{\im m}{\abs{m}^2 N \eta} + O \pbb{\frac{1}{(\eta + \epsilon) N}}\,.
\end{align}
Here we treated the zero mode $p = 0$ separately; it is given by
\begin{equation*}
\frac{1}{N} \sum_x \pbb{\frac{ S_\e}{1-|m|^2 S_\e}}_{xy} \;=\; \pbb{\Pi \frac{ S_\e}{1-|m|^2 S_\e}}_{xy} \;=\; \frac{1}{N (1 - \abs{m}^2)} \;=\; \frac{\im m}{\abs{m}^2 N \eta}\,,
\end{equation*}
where in the last step we used \eqref{id m im m}. The error term in \eqref{Ssum1} is estimated using a similar calculation.

Notice that the coefficient of $\wh S(p)$ in the denominator of \eqref{Ssum1} is now
$|m|^2(1-\e) = 1-\epsilon - (1 - \epsilon) \al \eta + O(\eta^2)$, where we used \eqref{m2}.
The results and the proof of Proposition~\ref{prop:profile} remain
unchanged when $S$ is replaced with $S_\epsilon$, except that $\alpha\eta$ must be replaced with $(1 - \epsilon) \alpha \eta+\e$ 
on the right-hand side of \eqref{def fra P}, and the whole expression is multiplied by an additional factor $(1-\e)$. 
Moreover, instead of \eqref{supexpl}, we now have
\be
  \max_{xy} \pbb{\frac{ S_\e}{1-|m|^2 S_\e}}_{xy}
  \;\asymp\; \frac{1}{N\eta} +\frac{1}{W\sqrt{\eta+\e}}\,.
\label{supexpl1}
\ee

Recall the definition \eqref{decc} of $\Phi_\epsilon$.
Following the proof of Theorem~\ref{lm:noprof}, instead of \eqref{stro} we now obtain
\be\label{stro1}
\Lambda^2 \;\prec\; \Psi^2 \qquad \Longrightarrow \qquad \Lambda^2  \;\prec\; \Phi_\e^2 +  \frac{1}{\eta+ \e+ \pb{\frac{W}{N}}^2} \pb{\Psi^4+\Psi^2W^{-1/2}}\,.
\ee
As in Section \ref{sec:noprof}, we can iterate \eqref{stro1} under the conditions
\begin{equation} \label{assump for iter eps}
\frac{1}{W \eta} \;\ll\; \eta + \epsilon + \pbb{\frac{W}{N}}^2 \,, \qquad W^{-1/2} \;\ll\; \eta + \epsilon + \pbb{\frac{W}{N}}^2\,.
\end{equation}
(Note that the a priori estimate  $(W \eta)^{-1}$ is still determined by $W$ despite the
small mean-field component. In Lemma~\ref{lm:lsc} it is given by $(M\eta)^{-1/2}$ where 
$M = (\max_{ij} s_{ij})^{-1} \sim (\e N^{-1} + W^{-1})^{-1}\sim W$.) The first condition of \eqref{assump for iter eps} holds if
\begin{equation} \label{cond1}
\eta (\eta + \epsilon) \;\gg\; W^{-1}\,,
\end{equation}
and the second holds if either 
\begin{equation} \label{cond2}
\eta + \epsilon \;\gg\; W^{-1/2 }
\end{equation}
or
\begin{equation}
N \;\ll\; W^{5/4 }\,.
\end{equation}
This concludes the proof of part (i).

In order to get complete delocalization of the resolvent, i.e.\ $\Lambda^2 \prec (N \eta)^{-1}$, we require $\Lambda \prec \Phi_\epsilon^2$ as well as
\begin{equation} \label{cond3}
W \sqrt{\eta + \epsilon} \;\geq\; N \eta\,,
\end{equation}
which ensures that $\Phi_\epsilon = (N \eta)^{-1}$.
Hence we get complete delocalization of the resolvent provided that \eqref{cond1}, \eqref{cond2}, and \eqref{cond3} hold. This concludes the proof of part (ii).

If $\epsilon \gg  (N/W^2)^{2/3}$ then there exists an $\eta$ such that the assumptions of part (ii) are met. Hence part (ii) and Proposition \ref{lemma: simple deloc} yields part (iii). This concludes the proof of Theorem \ref{lm:noprof1}.

\section{Complete delocalization of eigenvectors} \label{sec: ev}

In this section we derive a delocalization result
for the eigenvectors of $H$, using the complete delocalization $\Lambda^2 \prec (N \eta)^{-1}$ as input. We denote the eigenvalues of $H$ by $\lambda_1 \leq \lambda_2 \leq \cdots \leq \lambda_N$, and the associated normalized eigenvectors by $\f u_1, \f u_2, \dots, \f u_N$. We use the notation $\f u_\alpha = (u_\alpha(x))_{x = 1}^N$. We shall only consider eigenvectors associated with eigenvalues lying in the interval
\begin{equation*}
I \;\deq\; [-2 + \kappa, 2 - \kappa]\,,
\end{equation*}
where $\kappa > 0$ is fixed.

For $\ell \equiv \ell(L)$ define the characteristic function $P_{x,\ell}$ projecting onto the complement of the $\ell$-neighbourhood of $x$,
\begin{equation*}
P_{x,\ell}(y) \;\deq\; \ind{\abs{y - x} \geq \ell}\,.
\end{equation*}
Let $\epsilon>0$ and define the random subset of eigenvector indices through
\begin{equation*}
\Adet_{\epsilon, \ell} \;\deq\; \hbb{\alpha \col \lambda_\alpha \in I \,,\, \sum_x \abs{u_\alpha(x)} \, \norm{P_{x,\ell} \, \f u_\alpha} \leq \epsilon}\,,
\end{equation*}
which indexes the set of eigenvectors localized on scale $\ell$ up to an error $\epsilon$; see Remark \ref{rem: deloc vect} below for more details on its interpretation.

\begin{proposition}[Complete delocalization of eigenvectors] \label{lemma: simple deloc}
Suppose that $\Lambda \prec \Psi$ for some admissible control parameter $\Psi$. Let $\eta \equiv \eta_N$ be a sequence satisfying $M^{-1 + \gamma} \leq \eta \ll 1$. Suppose that
\begin{equation} \label{large eta estimate}
\sup_{E \in I} \abs{G_{xy}(E + \ii \eta)}^2 \;\prec\; \frac{1}{N \eta} + \delta_{xy}\,.
\end{equation}
Let $\ell \ll N$. Then we have for any $\epsilon > 0$
\begin{equation*}
\frac{\abs{\Adet_{\epsilon, \ell}}}{N} \;\leq\; C \sqrt{\epsilon}+ O_\prec(N^{-c})\,.
\end{equation*}
\end{proposition}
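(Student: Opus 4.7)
My plan is to convert the resolvent bound \eqref{large eta estimate} into pointwise delocalization of individual eigenvectors, then to exploit the hypothesis $X_\alpha \leq \epsilon$ (where $X_\alpha := \sum_x |u_\alpha(x)|\|P_{x,\ell}\f u_\alpha\|$) to show that each $\alpha \in \Adet_{\epsilon,\ell}$ comes with a concentration center $y_\alpha$, and finally to count such localized eigenvectors via orthogonality plus a rank constraint.

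\emph{Step 1: pointwise $\ell^\infty$-bound on $\f u_\alpha$.} From the spectral identity
\begin{equation*}
\im G_{xx}(E+\ii\eta) \;=\; \eta \sum_\alpha \frac{|u_\alpha(x)|^2}{(\lambda_\alpha - E)^2 + \eta^2}
\end{equation*}
and the hypothesis \eqref{large eta estimate} with $y=x$ (giving $\im G_{xx}(E+\ii\eta) \prec 1$ uniformly for $E \in I$), I would cover $I$ with a polynomially fine $\eta$-mesh and apply a union bound. For each $\alpha$ with $\lambda_\alpha \in I$, picking a mesh point $E_*$ within $\eta$ of $\lambda_\alpha$ gives $|u_\alpha(x)|^2 \leq 2\eta\,\im G_{xx}(E_*+\ii\eta)\prec \eta$, hence $\max\{\|\f u_\alpha\|_\infty^2 \col \lambda_\alpha \in I\} \prec \eta$.

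\emph{Step 2: approximate localization of each $\f u_\alpha$ with $\alpha \in \Adet$.} Set $a_x := |u_\alpha(x)|\,\|P_{x,\ell}\f u_\alpha\|$; then $a_x \in [0,1]$ and $\sum_x a_x \leq \epsilon$, which forces $\sum_x a_x^2 \leq \epsilon \cdot \max_x a_x \leq \epsilon$. Since $\sum_x a_x^2 = \sum_x |u_\alpha(x)|^2 \|P_{x,\ell}\f u_\alpha\|^2 = 1 - W_\alpha$, where $W_\alpha := \sum_x |u_\alpha(x)|^2 \|\bar\Pi_x \f u_\alpha\|^2$ and $\bar\Pi_x$ denotes the orthogonal projection onto the ball $B(x,\ell) = \{y : |y-x|_N < \ell\}$, we obtain $W_\alpha \geq 1-\epsilon$. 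As $W_\alpha$ is a convex combination of the $\|\bar\Pi_x \f u_\alpha\|^2$'s with weights $|u_\alpha(x)|^2$, some $y_\alpha$ achieves $\|\bar\Pi_{y_\alpha}\f u_\alpha\|^2 \geq 1 - \epsilon$. Define $\f v_\alpha := \bar\Pi_{y_\alpha}\f u_\alpha / \|\bar\Pi_{y_\alpha}\f u_\alpha\|$, a unit vector supported in $B(y_\alpha, \ell)$. A direct computation gives
\begin{equation*}
\|\f u_\alpha - \f v_\alpha\|^2 \;=\; 2 - 2\|\bar\Pi_{y_\alpha}\f u_\alpha\| \;\leq\; 2 - 2\sqrt{1-\epsilon} \;\leq\; 2\epsilon,
\end{equation*}
so the $\ell^2$-approximation error is $\leq \sqrt{2\epsilon}$.

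\emph{Step 3: counting.} The orthonormality of $\{\f u_\alpha\}_{\alpha \in \Adet}$ combined with $\|\f u_\alpha - \f v_\alpha\| \leq \sqrt{2\epsilon}$ gives, for $\alpha \neq \beta$ in $\Adet$, the overlap bound $|\langle\f v_\alpha, \f v_\beta\rangle| \leq 2\sqrt{2\epsilon}$; moreover the overlap vanishes whenever $|y_\alpha - y_\beta|_N \geq 2\ell$. Step 1 together with Bessel's inequality controls clustering of centers: for any $z$,
\begin{equation*}
(1-\epsilon)\,\#\{\alpha \in \Adet \col y_\alpha = z\} \;\leq\; \sum_{\alpha \col y_\alpha = z} \|\bar\Pi_z \f u_\alpha\|^2 \;\leq\; \tr(\bar\Pi_z) \;=\; 2\ell,
\end{equation*}
so each $\f v_\alpha$ has only $O(\ell)$ partners $\f v_\beta$ with nonzero overlap. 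Setting $M := |\Adet|$ and letting $G$ denote the $M \times M$ Gram matrix of $\{\f v_\alpha\}$, the sparsity together with $|G_{\alpha\beta}| \leq 2\sqrt{2\epsilon}$ yields $\|G - I\|_F^2 \leq C M \ell \epsilon$; comparing $\tr(G) = M$ against $\tr(G^2) \leq \mathrm{rank}(G)\|G\|^2$, where $\mathrm{rank}(G) \leq |S|$ for $S := \bigcup_\alpha B(y_\alpha, \ell)$, and inserting the projection identity $(1-2\epsilon)M \leq \tr(P_S \sum_{\alpha \in \Adet} \f u_\alpha \f u_\alpha^*) \leq |S|$ obtained from $\|(I-P_S)\f u_\alpha\| \leq \|\f u_\alpha - \f v_\alpha\|$, one extracts the claimed bound $|\Adet|/N \leq C\sqrt\epsilon + O_\prec(N^{-c})$; the $N^{-c}$ term absorbs the $N^{\epsilon_0}$ factors from the $\prec$-bound in Step 1 together with the deterministic fallback from the Bessel estimate.

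The hard part is the final quantitative counting step: naive bookkeeping with either $\sum_\alpha W_\alpha \prec N\ell\eta$ or the rank inequality alone yields only $|\Adet|/N \prec \ell\eta$, which is insensitive to $\epsilon$. Producing the $\sqrt\epsilon$ scale requires the $\sqrt\epsilon$-approximation of Step 2 to propagate to the Gram matrix of the $\f v_\alpha$'s in a way that the banded structure and the support constraint cooperate to give a true $\sqrt\epsilon$ (rather than $\epsilon$) loss when converting the trace identity into a bound on $M$.
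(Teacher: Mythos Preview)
Your Steps 1 and 2 are correct, but Step 3 has a genuine gap that you yourself identify and do not close. The core problem is that your argument never uses the off-diagonal part of the hypothesis \eqref{large eta estimate}, namely $\abs{G_{xy}}^2 \prec (N\eta)^{-1}$ for $x \neq y$; you only invoke the diagonal bound $\abs{G_{xx}} \prec 1$ in Step 1, and that already follows from $\Lambda \prec \Psi$. Everything after Step 1 is purely deterministic bookkeeping with orthonormality and support constraints. Consequently the best you can extract is $\abs{\Adet_{\epsilon,\ell}}/N \prec \ell\eta$ (or, equivalently, that $\Adet_{\epsilon,\ell}$ is empty with high probability whenever $\ell\eta \ll 1$). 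But the proposition allows $\ell\eta$ to be large, in which case your bound is vacuous. Your rank and Gram-matrix estimates all reduce to $M \leq C\abs{S}$ with $S = \bigcup_\alpha B(y_\alpha,\ell)$, and nothing in your setup prevents $\abs{S}$ from being of order $N$; there is no mechanism by which the $\sqrt{\epsilon}$-accuracy of the $\f v_\alpha$'s produces an $\epsilon$-dependent bound on $M/N$.

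The paper's proof uses the off-diagonal bound in an essential way. It shows that for every $x$, almost all of the mass of $y \mapsto \abs{G_{xy}}^2$ lies \emph{outside} any ball of radius $\ell$: since each of the $O(\ell)$ terms with $\abs{y-x} < \ell$ contributes $\prec (N\eta)^{-1}$, one has $\frac{\eta}{\im m}\norm{P_{x,\ell}\, G\, \delta_x}^2 = 1 + O_\prec(N^{-c})$. Spectrally decomposing $P_{x,\ell}G\delta_x = \sum_\alpha (\lambda_\alpha - z)^{-1}\ol{u_\alpha(x)}\, P_{x,\ell}\f u_\alpha$, splitting $\alpha$ into $\Adet_{\epsilon,\ell}$ and its complement via $\norm{a+b}^2 \leq (1+\zeta^{-1})\norm{a}^2 + (1+\zeta)\norm{b}^2$, averaging in both $x$ and $E \in I$, and using that the Poisson kernel $\eta/((E-\lambda_\alpha)^2+\eta^2)$ has bounded $E$-integral, one obtains an inequality in which the $\Adet$-contribution is controlled by $\epsilon/\zeta$ and the $\Adet^c$-contribution by $1+\zeta$; setting $\zeta = \sqrt{\epsilon}$ yields the $\sqrt{\epsilon}$ bound. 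The essential input your approach is missing is the quantitative statement that the resolvent profile $\abs{G_{xy}}^2$ is flat, not merely that diagonal entries are bounded.
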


\begin{remark} \label{rem: deloc vect}
The set $\Adet_{\epsilon, \ell}$ contains, in particular, all indices associated with eigenvectors that are exponentially localized in 
balls of radius $O(\ell)$. In fact, exactly as in \cite[Corollary 3.4]{EK1}, Proposition \ref{lemma: simple deloc} implies that the fraction of eigenvectors subexponentially localized on scales $\ell$ vanishes with high probability for large $N$.
\end{remark}

\begin{proof}[Proof of Proposition \ref{lemma: simple deloc}]
As usual, we omit the spectral parameter $z = E + \ii \eta$, where $E \in I$ is arbitrary and $\eta$ is the parameter given in the statement of Proposition \ref{lemma: simple deloc}.
By assumption on $\Lambda$, we have for all $x$
\begin{equation}\label{Gxx}
\frac{\eta}{\im m} \sum_{y} \abs{G_{yx}}^2 \; = \; \frac{\im G_{xx}}{\im m}
\;=\; 1 + O_\prec(\Psi)\,,
\end{equation}
uniformly in $E \in I$, where in the first step we used the spectral decomposition of $G$.
Thus, for all $x$, the map $y \mapsto \frac{\eta}{\im m} \abs{G_{yx}}^2$ is approximately a probability distribution on $\{1, \dots, N\}$. Roughly, \eqref{large eta estimate} states that this probability distribution is supported on the order of $N$ sites of $\{1, \dots, N\}$. More precisely, \eqref{large eta estimate} yields (introducing the standard basis  vector $\delta_x$ defined by $(\delta_x)(y) \deq \delta_{xy}$), for any fixed $x$,
\begin{align}
\frac{\eta}{\im m} \,  \normb{P_{x,\ell} \, G \, \delta_x}^2 &\;=\; \frac{\eta}{\im m} \sum_{y} \ind{\abs{y - x} \geq \ell} \,  \abs{G_{yx}}^2 
\notag \\
&\;=\;
\frac{\eta}{\im m} \sum_{y}  \abs{G_{yx}}^2 - \frac{\eta}{\im m} \sum_{y} \ind{\abs{y - x} < \ell} \,  \abs{G_{yx}}^2
\notag \\
&\;=\; 1 + O_\prec(\Psi) + O_\prec \qbb{\frac{\eta}{\im m} \pbb{\frac{N^{1 - c}}{N \eta} + 1}}
\notag \\ \label{spread of G}
&\;=\; 1 + O_\prec(N^{-c})\,,
\end{align}
uniformly in $E \in I$. Here in the third step we used \eqref{Gxx} and \eqref{large eta estimate}, and in the last step the upper bound $\eta \leq M^{-c}$ and the fact that $\Psi$ is admissible.

In order to obtain a statement about the eigenvectors, we do a spectral decomposition $G = \sum_\alpha \frac{\f u_\alpha \f u_\alpha^*}{\lambda_\alpha - z}$, which yields for arbitrary $\zeta > 0$
\begin{multline} \label{main splitting for deloc}
\frac{\eta}{\im m} \normb{P_{x,\ell} \, G \, \delta_x}^2 \;=\; \frac{\eta}{\im m} \normBB{\sum_\alpha \frac{1}{\lambda_\alpha - z} \, \ol u_\alpha(x) \, P_{x,\ell} \f u_\alpha}^2
\\
\leq\; \frac{\eta}{\im m} \pbb{1 + \frac{1}{\zeta}} \normBB{\sum_{\alpha \in \Adet_{\epsilon, \ell}} \frac{1}{\lambda_\alpha - z} \, \ol u_\alpha(x) \, P_{x,\ell} \f u_\alpha}^2 + \frac{\eta}{\im m} (1 + \zeta) \normBB{\sum_{\alpha \in \Adet_{\epsilon, \ell}^c} \frac{1}{\lambda_\alpha - z} \, \ol u_\alpha(x) \, P_{x,\ell} \f u_\alpha}^2\,,
\end{multline}
where  we introduced the complement set $\Adet_{\epsilon,\ell}^c \deq \{1, \dots, N\} \setminus \Adet_{\epsilon, \ell}$.
In order to estimate the first term on the right-hand side of \eqref{main splitting for deloc}, we write
\begin{align*}
\frac{\eta}{\im m} \normBB{\sum_{\alpha \in \Adet_{\epsilon, \ell}} \frac{1}{\lambda_\alpha - z} \, \ol u_\alpha(x) \, P_{x,\ell} \f u_\alpha}^2
&\;\leq\;
\frac{\eta}{\im m} \normBB{\sum_{\alpha \in \Adet_{\epsilon, \ell}} \frac{1}{\lambda_\alpha - z} \, \ol u_\alpha(x) \, \f u_\alpha}^2
\\
&\;\leq\; \frac{\eta}{\im m} \sum_\alpha \frac{\abs{u_\alpha(x)}^2}{\abs{\lambda_\alpha - z}^2}
\\
&\;=\; \frac{\im G_{xx}}{\im m}\;=\; 1 + O_\prec(\Psi)\,.
\end{align*}
Therefore we may estimate the left-hand side by its square root 
to get the bound
\begin{align}
\frac{\eta}{\im m} \normBB{\sum_{\alpha \in \Adet_{\epsilon, \ell}} \frac{1}{\lambda_\alpha - z} \, \ol u_\alpha(x) \, P_{x,\ell} \f u_\alpha}^2
&\;\leq\; (1 + O_\prec(\Psi)) \sqrt{\frac{\eta}{\im m}} \, \normBB{\sum_{\alpha \in \Adet_{\epsilon, \ell}} \frac{1}{\lambda_\alpha - z} \, \ol u_\alpha(x) \, P_{x,\ell} \f u_\alpha}
\notag \\ \label{first term for deloc}
&\;\leq\; \pb{C + O_\prec(\Psi)} \sum_{\alpha \in \Adet_{\epsilon,\ell}} \sqrt{\frac{\eta}{(E - \lambda_\alpha)^2 + \eta^2}} \abs{u_\alpha(x)} \, \norm{P_{x,\ell} \f u_\alpha}\,,
\end{align}
where in the last step we used \eqref{m3}.

Similarly, we may estimate the second term of \eqref{main splitting for deloc} using
\begin{equation} \label{second term for deloc}
\normBB{\sum_{\alpha \in \Adet_{\epsilon, \ell}^c} \frac{1}{\lambda_\alpha - z} \, \ol u_\alpha(x) \, P_{x,\ell} \f u_\alpha}^2 \;\leq\; \normBB{\sum_{\alpha \in \Adet_{\epsilon, \ell}^c} \frac{1}{\lambda_\alpha - z} \, \ol u_\alpha(x) \, \f u_\alpha}^2 \;=\; \sum_{\alpha \in \Adet_{\epsilon, \ell}^c} \frac{\abs{u_\alpha(x)}^2}{\abs{\lambda_\alpha - z}^2}\,.
\end{equation}

Combining \eqref{main splitting for deloc} with \eqref{spread of G}, \eqref{first term for deloc}, and \eqref{second term for deloc}, we get
\begin{multline*}
1 + O_\prec(N^{-c}) \;\leq\;\pb{C + O_\prec(N^{-c})} \pbb{1 + \frac{1}{\zeta}}\sum_{\alpha \in \Adet_{\epsilon,\ell}} \sqrt{\frac{\eta}{(E - \lambda_\alpha)^2 + \eta^2}} \, \abs{u_\alpha(x)} \, \norm{P_{x,\ell} \f u_\alpha}
\\
+ (1 + \zeta) \sum_{\alpha \in \Adet_{\epsilon, \ell}^c} \frac{\eta \, \abs{u_\alpha(x)}^2}{\im m \, \abs{\lambda_\alpha - z}^2}\,,
\end{multline*}
uniformly for $E \in I$. Now taking the average $\abs{I}^{-1} \int_I \dd E$ and using Jensen's inequality, we find
\begin{multline*}
1 + O_\prec(N^{-c}) \;\leq\; \pb{C + O_\prec(N^{-c})} \pbb{1 + \frac{1}{\zeta}} \sum_{\alpha \in \Adet_{\epsilon,\ell}} \abs{u_\alpha(x)} \, \norm{P_{x,\ell} \f u_\alpha}
\\
+ (1 + \zeta) \frac{1}{\abs{I}} \int_I \dd E \, \sum_{\alpha \in \Adet_{\epsilon, \ell}^c} \frac{\eta \, \abs{u_\alpha(x)}^2}{\im m \, \abs{\lambda_\alpha - E - \ii \eta}^2}\,,
\end{multline*}
where we used that
\begin{equation} \label{approximate delta}
\int_I \dd E \, \frac{\eta}{(E - \lambda_\alpha)^2 + \eta^2} \;\geq\; \frac{\pi}{2}
\end{equation}
for $\alpha \in \Adet_{\epsilon,\ell}$.
Averaging over $x$, i.e.\ taking $N^{-1} \sum_x$, yields
\begin{equation} \label{averaged estimate}
1 + O_\prec(N^{-c}) \;\leq\; \pb{C + O_\prec(N^{-c})} \pbb{1 + \frac{1}{\zeta}} \epsilon
+ (1 + \zeta) \frac{1}{\abs{I}} \int \dd E  \, \frac{1}{N} \sum_{\alpha \in \Adet_{\epsilon, \ell}^c} \frac{\eta}{\im m \, \abs{\lambda_\alpha - E - \ii \eta}^2}\,,
\end{equation}
where we used the definition of $\Adet_{\epsilon, \ell}$.
We may estimate the integral as
\begin{equation*}
\frac{1}{\abs{I}} \int \dd E  \, \frac{1}{N} \sum_{\alpha \in \Adet_{\epsilon, \ell}^c} \frac{\eta}{\im m \, \abs{\lambda_\alpha - E - \ii \eta}^2} \;\leq\;
\frac{1}{\abs{I}} \int \dd E  \, \frac{1}{N} \frac{\im \tr G}{\im m} \;=\; 1 + O_\prec(\Psi)\,.
\end{equation*}
Setting $\zeta = \sqrt{\epsilon}$ in \eqref{averaged estimate} therefore yields
\begin{equation*}
\frac{1}{\abs{I}} \int \dd E  \, \frac{1}{N} \sum_{\alpha \in \Adet_{\epsilon, \ell}^c} \frac{\eta}{\im m \, \abs{\lambda_\alpha - E - \ii \eta}^2} \;\geq\; 1 - \pb{C + O_\prec(N^{-c})} \sqrt{\epsilon} - O_\prec(N^{-c}) \;\geq\; 1 - C \sqrt{\epsilon} - O_\prec(N^{-c})\,.
\end{equation*}
Using
\begin{equation*}
\frac{1}{\abs{I}} \int \dd E  \, \frac{1}{N} \sum_{\alpha} \frac{\eta}{\im m \, \abs{\lambda_\alpha - E - \ii \eta}^2} \;=\;
\frac{1}{\abs{I}} \int \dd E  \, \frac{1}{N} \frac{\im \tr G}{\im m} \;=\; 1 + O_\prec(\Psi)
\end{equation*}
we therefore get
\begin{align*}
\frac{1}{\abs{I}} \int \dd E  \, \frac{1}{N} \sum_{\alpha \in \Adet_{\epsilon, \ell}} \frac{\eta}{\im m \, \abs{\lambda_\alpha - E - \ii \eta}^2} \;\leq\; C \sqrt{\epsilon} + O_\prec(N^{-c})\,.
\end{align*}
The claim now follows from \eqref{approximate delta}.
\end{proof}

\section{Extension to higher dimensions and a slowly decaying band}\label{sec:gen}

In this Section we extend Theorem \ref{lm:noprof},
Corollary \ref{cor:noprof}, and Theorem \ref{lm:withprof} in two directions: higher dimensions $d$ and a slowly decaying band.

The multidimensional analogues of the slowly decaying
profile are left to the reader, as is the formulation of
these extensions if a small mean-field component is added to the band matrix.
All these results can be obtained in a straightforward manner following the proofs for the one-dimensional case with a rapidly decaying $f$.

\subsection{Higher dimensions}
Fix $d =1,2,3, \dots$, and recall that $N = L^d$ and $M \asymp W^d$. Throughout this section we continue to assume \eqref{decay of f}.
The following lemma gives the sharp upper bound on the size of $\Tdet_{xy}$ defined by the formula \eqref{Ydet}, where $x$ and $y$ take on values in the $d$-dimensional torus $\bb T_L^d \equiv \bb T$.
\begin{lemma}\label{lm:prof bound d}
Let $d = 2,3, \dots$ and assume \eqref{decay of f}. Then there is a constant $C$ such that
\begin{equation*}
\Tdet_{xy} \;\leq\; C \max \hbb{\frac{1}{M} , \frac{1}{N \eta}}
\end{equation*}
for all $x$ and $y$.
\end{lemma}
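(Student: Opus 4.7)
The plan is to analyze $\Tdet_{xy}$ via Fourier transformation on the torus, exploiting translation invariance of $S$. Summing the geometric series \eqref{random walk picture} and using Fourier inversion yields
\begin{equation*}
\Tdet_{xy} \;=\; \frac{1}{N}\sum_{p \in P_L^d} \me^{\ii p\sdot (x-y)}\, \frac{|m|^2 \wh S(p)}{1 - |m|^2 \wh S(p)}\,,
\end{equation*}
with $\wh S$ as in \eqref{SP}. I would first separate the zero mode $p = 0$, whose contribution is $\frac{1}{N}\cdot \frac{|m|^2}{1-|m|^2} = \frac{\im m}{N\eta} \le \frac{C}{N\eta}$ by \eqref{id m im m} and \eqref{m is bounded}; this produces the $(N\eta)^{-1}$ part of the bound. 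The remaining modes I bound in absolute value, splitting them at the scale $|p| \asymp W^{-1}$.

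In the high-momentum region $|p| > W^{-1}$, the fact that $f$ is smooth and rapidly decaying (by \eqref{decay of f}) makes $\wh f$ Schwartz; a Riemann-sum approximation then gives $|\wh S(p)| \le C_K (W|p|)^{-K}$ for every $K$, while $|1 - |m|^2 \wh S(p)| \ge c$ is bounded below. Comparing the sum to the integral $\int (W|p|)^{-K}\,\dd p$ via the substitution $u = Wp$, and choosing $K > d$, produces a total contribution of order $W^{-d} = M^{-1}$.

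In the low-momentum region $0 < |p| \le W^{-1}$, the Taylor expansion $\wh S(p) = 1 - \frac{1}{2}W^2 (p\sdot D' p) + O(W^4|p|^4)$ with positive-definite $D'$ (cf.\ \eqref{low-p exp}), combined with $|m|^2 = 1 - \alpha\eta + O(\eta^2)$ from \eqref{m2}, yields the pointwise lower bound $1 - |m|^2 \wh S(p) \ge c(\eta + W^2|p|^2)$. Passing to an integral via $u = Wp$ bounds the sum by
\begin{equation*}
\frac{C}{W^d}\int_{|u|\le 1}\frac{\dd u}{\eta + |u|^2}\,.
\end{equation*}
The hypothesis $d \ge 2$ enters decisively here: for $d \ge 3$ the radial integrand $u^{d-3}$ is integrable at the origin, so the integral is bounded uniformly in $\eta$ and contributes $O(W^{-d}) = O(M^{-1})$, as desired.

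I expect the main technical annoyance to be the borderline case $d = 2$, where $\int_{|u|\le 1}|u|^{-2}\,\dd u$ is logarithmically divergent and the naive argument picks up an extra factor $\log(1/\eta)$. To recover the clean bound one must either absorb the log using the a priori lower bound $\eta \ge M^{-1+\gamma}$ from \eqref{def:S} (so $\log(1/\eta) \le C \log W$, which is dominated by any $W^{\gamma'}$), or exploit oscillations of $\me^{\ii p\sdot(x-y)}$ for $x \ne y$. A secondary routine point is justifying the comparison of Riemann sums over $P_L^d$ with the corresponding continuum integrals; since $\wh S(p)$ varies on scale $W^{-1} \gg L^{-1}$, the discretization error is negligible at the resolution of the lattice spacing of $P_L^d$.
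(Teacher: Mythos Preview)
Your approach is sound for $d \geq 3$ and more direct than the paper's. The paper does not isolate the zero mode; instead it splits off a main term $\chi(q)/(\alpha\eta + q\cdot Dq)$ in Fourier space, rewrites it by Poisson summation as a lattice sum of mollified Yukawa potentials (equation \eqref{main term Poi d}), and then bounds that spatial sum by casework on $\eta \lessgtr (W/L)^2$ --- the $(N\eta)^{-1}$ emerges in the small-$\eta$ case from the many overlapping $k$-translates, rather than from a single zero mode. The Fourier remainder $R(q)$ is handled by Taylor expansion and the rescaling $r = q/\sqrt{\eta}$. This route is heavier, but it is set up precisely so that it can be reused for the sharper pointwise asymptotics of Lemma \ref{lm: prof for d}; for the bare upper bound your zero-mode/nonzero-mode split in momentum space is the shorter argument.

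On the $d = 2$ logarithm: your diagnosis is correct, and neither of your proposed remedies actually removes it. The a priori bound $\eta \ge M^{-1+\gamma}$ only gives $\log(1/\eta) \le C\log W$, which is not a constant; and exploiting the oscillation of $\me^{\ii p\cdot(x-y)}$ cannot help at $x = y$, where $\Tdet_{xy}$ is largest. In fact the paper's own argument carries the same issue: in two dimensions the Yukawa potential $V$ has a logarithmic singularity at the origin, so $\norm{V * \varphi_{\sqrt{\alpha\eta}}}_\infty \asymp \abs{\log \eta}$, and already the $k=0$ term of \eqref{main term Poi d} contributes $W^{-2}\abs{\log\eta}$ at $x=y$. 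This is harmless for the applications --- in Theorem \ref{lm:noprofd} and downstream the bound enters only through $\prec$, which absorbs any $\log N$ factor into the $N^\epsilon$ --- but the literal constant-$C$ statement of the lemma for $d = 2$ is off by a factor of $\log(1/\eta)$, and you should not expect to close that gap.
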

\begin{proof}
See Appendix \ref{sec:prof}.
\end{proof}
In order to state the precise form of the profile $\Tdet_{xy}$, we define the covariance matrix $D \equiv D_W$ through
\begin{equation} \label{Dd}
D_{ij} \;\deq\; \frac{1}{2} \sum_{x \in \bb T} \frac{x_i x_j}{W^2} s_{x0}\,.
\end{equation}
We have
\begin{equation*}
D \;=\; D_\infty + O(W^{-1}) \qquad \text{where} \qquad (D_{\infty})_{ij} \;\deq\; \frac{1}{2} \int_{\R^d} x_i x_j f(x) \, \dd x\,.
\end{equation*}
Since $D_\infty > 0$ we get $D \geq c > 0$ uniformly in $W$.

Next, we define the $d$-dimensional Yukawa potential
\begin{equation*}
V(x) \;\deq\; \int_{\bR^d} \frac{\dd q}{(2 \pi)^d} \frac{\me^{\ii q \sdot x}}{1 + q^2}\qquad (x\in \bR^d)\,,
\end{equation*}
where the integral is to be understood as the Fourier transform of a tempered distribution.
For $d = 1$ we have $V(x) = \frac{1}{2} \me^{-\abs{x}}$ and for $d = 3$ we have $V(x) = \frac{1}{4 \pi \abs{x}}\me^{-\abs{x}}$. Generally, in $d \geq 3$ dimensions $V$ has a singularity of type $\abs{x}^{2 - d}$ at the origin, and in $d = 2$ dimensions a singularity of type $\log \abs{x}$; see e.g.\ \cite[Theorem 6.23]{LiebLoss} for more details.
The leading-order behaviour of the profile $\Tdet_{x0}$ is given by
\begin{equation} \label{approx prof d}
\Pdet_x \;\deq\; \frac{\abs{m}^2}{N} \sum_{p \in (\frac{2 \pi}{L} \Z)^d} \me^{\ii p \sdot x} \frac{\chi(Wp)}{\alpha \eta + W^2 (p \sdot D p)} \;=\; \frac{\abs{m}^2 (\alpha \eta)^{d/2 - 1}}{W^d \sqrt{\det D}} \sum_{k \in \Z^d}  \pb{V * \varphi_{\sqrt{\alpha \eta}}} \pbb{\frac{\sqrt{\alpha \eta}}{W}D^{-1/2} (x + kL)}\,,
\end{equation}
where $\chi$ is a smooth function satisfying $\chi(q) = 1$ for $\abs{q} \leq 1/2$ and $\chi(q) = 0$ for $\abs{q} \geq 1$, $\varphi$ is a Schwartz function satisfying $\int \varphi = 1$, and $\varphi_t(x) \deq t^{-d} \varphi(x/t)$. (In fact, $\varphi(D^{-1/2} x)$ is the Fourier transform of $\chi(q)$.) The second step of \eqref{approx prof d} follows by Poisson summation; see Appendix \ref{sec:prof} and in particular \eqref{1st} for more details. The following lemma gives the precise error bounds in the approximation \eqref{approx prof d}.

\begin{lemma}\label{lm: prof for d}
Let $d = 1,2,3,\dots$ and assume \eqref{decay of f}. Then
\begin{equation} \label{asymp for T d}
\Tdet_{xy} \;=\; \Pdet_{x - y}
+ O_K \pBB{\frac{1}{W^d} \avgbb{\frac{x}{W}}^{-K} + \frac{\eta^{d/2}}{W^d} \avgbb{\frac{\sqrt{\eta} x}{W}}^{-K}}\,.
\end{equation}
\end{lemma}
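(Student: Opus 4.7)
The plan is to base the argument on discrete Fourier analysis on $\bb T_L^d$. Using translation invariance of $S$ we may assume $y = 0$, so the task is to estimate
\begin{equation*}
\Tdet_{x0} \;=\; \frac{1}{N} \sum_{p \in P} \me^{\ii p \sdot x} \, \frac{\abs{m}^2 \wh S(p)}{1 - \abs{m}^2 \wh S(p)}\,,
\end{equation*}
where $P = (2\pi/L) \bb T_L^d$. I introduce the smooth cutoff $\chi$ from \eqref{approx prof d} and split $\Tdet_{x0} = \Tdet_{x0}^{\mathrm{lo}} + \Tdet_{x0}^{\mathrm{hi}}$, corresponding to the factors $\chi(Wp)$ and $1 - \chi(Wp)$ respectively. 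The low-momentum part will match the claimed main term $\Pdet_x$ up to the second error in \eqref{asymp for T d}, while the high-momentum part will produce the first error term.

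For the low-momentum part, I first derive the pointwise bound $\widehat S(p) = 1 - W^2 (p \sdot D p) + O\pb{W^4 \abs{p}^4}$ valid for $\abs{W p} \leq 1$, directly from the representation \eqref{SP} by Taylor-expanding the cosine transform and using \eqref{decay of f} to justify the $O(W^4 |p|^4)$ remainder. Combining with $|m|^2 = 1 - \alpha \eta + O(\eta^2)$ from \eqref{m2}, I get
\begin{equation*}
1 - |m|^2 \widehat S(p) \;=\; \alpha\eta + W^2 (p\sdot Dp) + O\pb{W^4 |p|^4 + \eta W^2 |p|^2 + \eta^2}
\end{equation*}
on the support of $\chi(Wp)$. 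Writing $\Tdet_{x0}^{\mathrm{lo}} - \Pdet_x$ as the Fourier inverse of the difference of the two symbols, the error integrand is, up to absolute value, bounded by $C W^2 \chi(Wp)\bigl[\alpha\eta + W^2 (p\sdot Dp)\bigr]^{-1}$ times $1$ (uniformly), which after the change of variables $q = W p / \sqrt{\alpha \eta}$ gives precisely the second error term after Poisson summation. For the main term itself I apply Poisson summation
\begin{equation*}
\frac{1}{N} \sum_{p \in P} \me^{\ii p \sdot x} g(p) \;=\; \sum_{k \in \Z^d} \check g(x + k L)\,,
\end{equation*}
with $g(p) = \chi(Wp) / (\alpha\eta + W^2 (p \sdot D p))$, and perform the rescaling $q = W p /\sqrt{\alpha\eta}$ followed by $r = D^{1/2} q$. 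Since the inverse Fourier transform of $(1 + |r|^2)^{-1}$ is $V$ while $\chi$ contributes a mollifier $\varphi_{\sqrt{\alpha\eta}}$ on the spatial scale $W / \sqrt{\alpha \eta}$, one recovers exactly the second expression in \eqref{approx prof d}.

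For the high-momentum part $\Tdet_{x0}^{\mathrm{hi}}$, the key observation is that $\abs{\widehat S(p)} \leq 1 - c$ uniformly on the region $\abs{W p} \geq 1/2$ (this follows from \eqref{Slow} together with the quadratic lower bound on $1 - \widehat S$ in the relevant momentum window, as used in the proof of Lemma~\ref{lm:invert}). Hence the Fourier multiplier $(1 - \chi(Wp)) \abs{m}^2 \widehat S(p) / (1 - \abs{m}^2 \widehat S(p))$ is a smooth bounded function of $p$ supported away from the origin in the $Wp$-scale. I write $\Tdet_{x0}^{\mathrm{hi}} = W^{-d} F(x/W)$ where $F$ is (a periodized version of) a Schwartz function, and obtain $\Tdet_{x0}^{\mathrm{hi}} = O_K\pb{W^{-d} \avg{x/W}^{-K}}$ via iterated summation by parts in $p \in P$ (equivalently, via smoothness of $\chi$ together with the rapid decay of $f$ in \eqref{decay of f}, transferred to smoothness of $\widehat S$ via \eqref{SP}). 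Each derivative in $p$ produces a factor $W$ and is compensated by a factor $(x/W)^{-1}$; periodization then accounts for the other lattice sites via the $\avg{\cdot}$ symbol.

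The main obstacle I anticipate is the uniformity of the low-momentum Taylor expansion: both terms $\alpha\eta$ and $W^2 (p \sdot D p)$ can be arbitrarily small on the support of $\chi$, and the error $W^4 |p|^4 + \eta W^2 |p|^2$ must be compared with this sum carefully to yield the gain $\eta^{d/2} W^{-d}$. The cleanest way is to split the $p$-integral into the two regimes $W^2 (p \sdot D p) \leq \alpha \eta$ and $W^2 (p \sdot D p) \geq \alpha \eta$, estimate the error by $C W^2 |p|^2 / (\alpha \eta + W^2 (p \sdot D p))$ in each, and use the same scaling $q = Wp/\sqrt{\alpha \eta}$ as above. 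The remaining error in the low-momentum piece coming from dropping the factor $\widehat S(p)$ in the numerator is of the same form and absorbed into this bound; the final step is to pull the rapid-decay estimate through Poisson summation to furnish the $\avg{\sqrt\eta x/W}^{-K}$ weight, which is standard once the integral representation is established.
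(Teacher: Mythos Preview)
Your overall scheme matches the paper's: Fourier representation, a smooth $\chi$-cutoff separating low from high momenta, Poisson summation for the main term $\Pdet_x$, and summation by parts for the high-momentum remainder giving $W^{-d}\avg{x/W}^{-K}$. That part is fine.

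The genuine gap is in the low-momentum error, and it matters for $d\geq 2$. With the order-four expansion $\widehat S_W(q) = 1 - (q\sdot Dq) + O(|q|^4)$ (here $q=Wp$), the difference symbol $R(q)\chi(q)$, written in the rescaled variable $r = q/\sqrt{\alpha\eta}$, is smooth with bounded $r$-derivatives but does \emph{not decay} as $|r|\to\infty$: its large-$r$ behaviour is governed by the $\eta$-independent function $R_0(q) = \widehat S_W(q)/(1-\widehat S_W(q)) - (q\sdot Dq)^{-1}$, which is a nonzero smooth function of $q$ on the support of $\chi$. Consequently the change of variables $r = Wp/\sqrt{\alpha\eta}$ cannot produce the Jacobian factor $\eta^{d/2}$: summation by parts delivers the weight $\avg{\sqrt{\eta}x/W}^{-K}$, but the prefactor you obtain is only $W^{-d}$, since the sum still extends over an $r$-region of diameter $\asymp \eta^{-1/2}$ set by $\chi$. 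Your proposed pointwise bound $CW^2|p|^2/(\alpha\eta+W^2 p\sdot Dp) = C|r|^2/(1+|r|^2)$ exhibits exactly this non-decaying behaviour. For $d\geq 2$ the resulting error $W^{-d}\avg{\sqrt{\eta}x/W}^{-K}$ is \emph{not} small compared to the main term $\Pdet_x\asymp \eta^{d/2-1}W^{-d}$ on the profile scale $|x|\sim W\eta^{-1/2}$, so the approximation would be vacuous there.

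The paper's remedy (only sketched there as ``a rather laborious exercise in Taylor expansion'') is to expand $\widehat S_W$ to higher order and split $R\chi$ into (i) a piece that is a smooth function of $q$ alone --- essentially $R_0(q)\chi(q)$ plus further corrections --- which is absorbed into the first error term $W^{-d}\avg{x/W}^{-K}$, and (ii) a remainder that genuinely decays in $r$, for which the Jacobian then gives the correct prefactor $\eta^{d/2}W^{-d}$. Your ``split the $p$-integral into two regimes'' and ``pull the rapid-decay estimate through Poisson summation'' do not furnish this decomposition; you need to subtract off the $q$-smooth part of $R$ first.
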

\begin{proof}
See Appendix \ref{sec:prof}.
\end{proof}

\begin{remark}
The convolution in \eqref{approx prof d} smooths out the Yukawa potential on the scale $x \approx W$. The error terms in \eqref{asymp for T d} are negligible compared to the main term $\Pdet_x$ in the regime $W \ll |x| \leq C W \eta^{-1/2}$. Therefore the approximation $\Pdet$ is meaningful from the profile scale $W \eta^{-1/2}$ down to the band scale $W$. The actual choice of the function $\chi$
in \eqref{approx prof d} is immaterial in the relevant regime $|x|\gg W$,
as long as $\chi$ is equal to one in a neighbourhood of the origin.
\end{remark}

Next, we state the counterparts of Theorem \ref{lm:noprof}, Corollary \ref{cor:noprof}, and Theorem \ref{lm:withprof} in the higher-dimensional setting.  Their proofs are trivial modifications of the proofs of their one-dimensional counterparts, using Lemmas \ref{lm:prof bound d} and \ref{lm: prof for d}.

\begin{theorem}[Improved local semicircle law]\label{lm:noprofd}
Let $d = 2,3, \dots$ and assume \eqref{decay of f}. Suppose moreover that $L \ll W^{1 + d/4}$ and $\eta \gg L^{2} / W^{d + 2}$. Then we have
\be
   \Lambda^2 \;\prec\; \max \hbb{\frac{1}{M} , \frac{1}{N \eta}}
\ee
for $z\in \f S$.
\end{theorem}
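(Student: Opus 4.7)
The proof will follow the one-dimensional template in Section~\ref{sec:noprof} essentially verbatim, with the only substantive change being that the sharp upper bound $\Tdet_{xy} \le C\max\{M^{-1}, (N\eta)^{-1}\}$ from Lemma~\ref{lm:prof bound d} now plays the role that $\max_{xy}\Tdet_{xy}\asymp \Phi^2$ played in $d=1$. Since the core argument in Sections~\ref{sec:T} and \ref{sec:inv} was already formulated in the general $d$-dimensional setting, all the inputs are in place.

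More precisely, my plan is as follows. Assume $\Lambda \prec \Psi$ for some admissible $\Psi$. Combining Proposition~\ref{YleqT}, the averaged bound \eqref{barTbound} on $\ol T_y$, and Lemma~\ref{lm:prof bound d}, we obtain
\begin{equation*}
T_{ij} \;\prec\; \max\hbb{\frac{1}{M}, \frac{1}{N\eta}} + \frac{1}{\eta + (W/L)^2}\,\pb{\Psi^4 + \Psi^2 M^{-1/2}}\,,
\end{equation*}
uniformly in $i,j$. Feeding this into Corollary~\ref{cor:LO} converts the averaged estimate into a pointwise bound on $\Lambda$:
\begin{equation*}
\Lambda^2 \;\prec\; \Psi^2 \qquad \Longrightarrow \qquad \Lambda^2 \;\prec\; \max\hbb{\frac{1}{M}, \frac{1}{N\eta}} + \frac{1}{\eta + (W/L)^2}\,\pb{\Psi^4 + \Psi^2 M^{-1/2}}\,.
\end{equation*}
This is exactly the higher-dimensional analogue of \eqref{stro}.

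The next step is to iterate this self-improving estimate, starting from the a priori bound $\Psi_0 \deq (M\eta)^{-1/2}$ supplied by Lemma~\ref{lm:lsc}. As in Section~\ref{sec:noprof}, convergence of the iteration requires the two smallness conditions
\begin{equation*}
\frac{1}{\eta + (W/L)^2}\,\Psi_0^2 \;\ll\; 1 \qquad \text{and} \qquad \frac{1}{\eta + (W/L)^2}\,M^{-1/2} \;\ll\; 1\,.
\end{equation*}
Using $M \asymp W^d$ and $N = L^d$, the first inequality reduces (after estimating $\eta + (W/L)^2 \ge (W/L)^2$) to $\eta \gg L^2/W^{d+2}$, which is the second hypothesis of the theorem. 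The second inequality, again using $\eta + (W/L)^2 \ge (W/L)^2$, reduces to $W^{-d/2} \ll (W/L)^2$, i.e., $L \ll W^{1+d/4}$, which is the first hypothesis. Once both conditions hold, a finite number of iterations (depending only on $\gamma$) drives $\Psi^2$ down to a constant multiple of $\max\{M^{-1}, (N\eta)^{-1}\}$, which yields the claim.

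The only place requiring any real care, and what I would highlight as the main point to verify in the higher-dimensional setting, is the profile bound $\Tdet_{xy} \le C\max\{M^{-1}, (N\eta)^{-1}\}$ itself. The argument in $d=1$ used the explicit Poisson-summation representation \eqref{def fra P} and reduced to estimating $\Upsilon_{xy}$; in $d \ge 2$ the Yukawa kernel has a singularity at the origin (logarithmic in $d=2$, of order $|x|^{2-d}$ for $d\ge 3$), so one must check that the convolution against $\varphi_{\sqrt{\alpha\eta}}$ in \eqref{approx prof d} regularizes the singularity on scale $W$ and produces the bound $C/M$ at short range, together with the usual $C/(N\eta)$ from the zero mode at long range. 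This is precisely the content of Lemma~\ref{lm:prof bound d}, which is proved in Appendix~\ref{sec:prof}; everything else in the proof is purely algebraic and dimension-free.
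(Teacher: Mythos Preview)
Your proposal is correct and follows essentially the same approach as the paper, which simply states that the proof is a ``trivial modification'' of the one-dimensional argument in Section~\ref{sec:noprof} using Lemma~\ref{lm:prof bound d} in place of \eqref{supexpl}. You have faithfully carried out that modification: the dimension-free machinery of Sections~\ref{sec:T}--\ref{sec:inv} feeds into the same self-improving scheme, and your derivation of the two iteration conditions from the hypotheses $L \ll W^{1+d/4}$ and $\eta \gg L^2/W^{d+2}$ is exactly what the paper intends.
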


\begin{corollary}[Eigenvector delocalization]\label{cor:noprofd}
Let $d = 2,3, \dots$ and assume \eqref{decay of f}. If  $L \ll W^{1 + d/4}$ then the eigenvectors of $H$ are completely delocalized in the sense of Proposition \ref{lemma: simple deloc}.
\end{corollary}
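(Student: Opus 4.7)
The plan is to follow the same strategy as the proof of Corollary \ref{cor:noprof} in the one-dimensional case, now invoking the higher-dimensional improved local semicircle law of Theorem \ref{lm:noprofd} in place of Theorem \ref{lm:noprof}, and applying Proposition \ref{lemma: simple deloc} at an appropriately chosen value of $\eta$. Since Theorem \ref{lm:noprofd} already packages all the probabilistic work needed, the remaining task is essentially bookkeeping.

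First, I would pick $\eta$ lying in the window
\[
 \max\hB{L^2/W^{d+2} \,,\, M^{-1+\gamma}} \;\ll\; \eta \;\leq\; c\, M/N
\]
for a small constant $c > 0$.  The lower bound is the one required by Theorem \ref{lm:noprofd} together with the spectral domain $\f S$; the upper bound $\eta \leq c\, M/N \asymp (W/L)^d$ is chosen so that the second term on the right-hand side of the Theorem \ref{lm:noprofd} bound $\Lambda^2 \prec \max\{1/M,\,1/(N\eta)\}$ dominates, collapsing it to the complete delocalization bound $\Lambda^2 \prec 1/(N\eta)$.  Checking that this window is nonempty is where the hypothesis $L \ll W^{1+d/4}$ enters; this is the only place in the proof where a restriction on $L$ beyond the one needed for Theorem \ref{lm:noprofd} itself could be required, and is the sole step that demands a moment of care.

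For such an $\eta$, the bound $\Lambda \prec (N\eta)^{-1/2}$, combined with $\abs{G_{xx}} \leq \abs{m} + \Lambda = O_\prec(1)$ by \eqref{m is bounded}, yields
\[
\sup_{E \in I} \absb{G_{xy}(E + \ii \eta)}^2 \;\prec\; \frac{1}{N\eta} + \delta_{xy}
\]
uniformly in $x,y$, which is precisely the input \eqref{large eta estimate} of Proposition \ref{lemma: simple deloc}.  Since $\eta \ll 1$ and $\eta \geq M^{-1+\gamma}$ hold by construction, Proposition \ref{lemma: simple deloc} applies and gives, for any $\epsilon > 0$ and any $\ell \ll L$, the bound $\abs{\Adet_{\epsilon,\ell}}/N \leq C\sqrt{\epsilon} + O_\prec(N^{-c})$, which is the complete delocalization asserted in the corollary.

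The main (and in fact only) obstacle is verifying the compatibility of the lower and upper bounds on $\eta$ under the hypothesis $L \ll W^{1+d/4}$; everything else is a direct transcription of the one-dimensional argument, with the cleaner bound $\max\{1/M,1/(N\eta)\}$ from Theorem \ref{lm:noprofd} replacing the $\Phi^2$ of the $d=1$ case.
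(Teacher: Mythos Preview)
Your approach mirrors exactly what the paper intends: it declares the proof a ``trivial modification'' of the one-dimensional case, and your outline is precisely that modification. For $d=2$ the argument goes through, since the window for $\eta$ is nonempty exactly when $L \ll W^{3/2} = W^{1+d/4}$.

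However, the step you flag as requiring ``a moment of care'' actually fails for $d \ge 3$. Compatibility of the lower bound $\eta \gg L^2/W^{d+2}$ (needed for Theorem~\ref{lm:noprofd}) with the upper bound $\eta \le c\,M/N \asymp (W/L)^d$ (needed so that $\max\{1/M,\,1/(N\eta)\} = 1/(N\eta)$) amounts to $L^{d+2} \ll W^{2d+2}$, i.e.\ $L \ll W^{2(d+1)/(d+2)}$. The identity
\[
\Bigl(1 + \frac{d}{4}\Bigr) - \frac{2(d+1)}{d+2} \;=\; \frac{d(d-2)}{4(d+2)}
\]
shows that for $d \ge 3$ the exponent $2(d+1)/(d+2)$ is strictly smaller than $1 + d/4$; hence the hypothesis $L \ll W^{1+d/4}$ does \emph{not} guarantee the window is nonempty. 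When $L$ is close to $W^{1+d/4}$ and $d \ge 3$, the upper bound $c\,M/N$ falls strictly below the lower bound, and no admissible $\eta$ exists. This gap is not specific to your write-up---the paper's own one-line deferral to the $d=1$ case has the same issue---and the corollary as stated appears to require the stronger hypothesis $L \ll W^{2(d+1)/(d+2)}$ for $d \ge 3$ (which coincides with $L \ll W^{1+d/4}$ only at $d=2$).
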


\begin{theorem}[Diffusion profile]\label{lm:withprofd}
Let $d = 2,3, \dots$ and assume \eqref{decay of f}. Suppose that $L \ll W^{1 + d/4}$ and $(W/L)^2 \le \eta \le 1$.
Then
\be
 |T_{xy} -\Tdet_{xy}| \;\prec\; \frac{1}{N\eta} + \frac{1}{M^{3/2} \eta}\,, \qquad
  \absB{P_x |G_{xy}|^2 - \delta_{xy}|m|^2- \abs{m}^2 \Tdet_{xy}} \;\prec\; \frac{1}{N\eta} + \frac{1}{M^{3/2} \eta}
 + \frac{\delta_{xy}}{\sqrt{M}}\,.
\ee
Moreover, the analogues of \eqref{Tfin} and \eqref{Tfin1} hold with
\begin{equation*}
\Upsilon_{xy} \;\equiv\; \Upsilon^{(K)}_{xy} \;\deq\; \frac{\eta^{d/2 - 1}}{W^d} \pb{V * \varphi_{\sqrt{\eta}}} \pbb{\frac{\sqrt{\alpha \eta}}{W}D^{-1/2} (x - y)} + \frac{1}{W^d} \avgbb{\frac{x-y}{W}}^{-K} + \frac{\eta^{d/2}}{W^d} \avgbb{\frac{\sqrt{\eta} (x-y)}{W}}^{-K}\,,
\end{equation*}
where $K$ is an arbitrary, fixed, positive integer.
\end{theorem}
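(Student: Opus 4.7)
\textbf{Proof plan for Theorem~\ref{lm:withprofd}.} The argument mirrors the proof of Theorem~\ref{lm:withprof} verbatim, with the $d$-dimensional profile asymptotics of Lemma~\ref{lm: prof for d} playing the role of Proposition~\ref{prop:profile}. First I would use Theorem~\ref{lm:noprofd} to set $\Lambda \prec \Psi$ for an admissible control parameter $\Psi^2 \deq \max\{1/M,1/(N\eta)\}$. Under the standing hypothesis $\eta \geq (W/L)^2$ and $d \geq 2$ one has $N\eta \geq L^{d-2}W^2 \geq W^d \asymp M$, so in fact $\Psi^2 \asymp M^{-1}$; this is admissible (cf.\ \eqref{admissible Psi}) thanks to $L \ll W^{1+d/4}$ and the lower bound $\eta \geq M^{-1+\gamma}$.

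Next I would invoke Proposition~\ref{YleqT} with this $\Psi$. The error bound \eqref{EER} becomes
\begin{equation*}
\max_{x,y} \abs{\wt \cE_{xy}} \;\prec\; \frac{1}{\eta + (W/L)^2}\pb{M^{-2} + M^{-3/2}} \;\prec\; \frac{1}{M^{3/2}\eta}\,,
\end{equation*}
where I have absorbed $(W/L)^2$ into $\eta$ in the denominator. Combining \eqref{kjyT} with the semicircle bound \eqref{barTbound} on $\ol T_y$ and the algebraic identity $\frac{\im m}{N\eta}\Pi + \abs{m}^2 \frac{S-\Pi}{1 - \abs{m}^2 S} = \Tdet$ (a consequence of \eqref{id m im m} and $\Pi S = S\Pi = \Pi$) yields the first claim $\abs{T_{xy} - \Tdet_{xy}} \prec \frac{1}{N\eta} + \frac{1}{M^{3/2}\eta}$; the cross-term from $\bar T_y$ of size $(M\eta)^{-1/2}/(N\eta)$ is absorbed into $(N\eta)^{-1}$ since $M\eta \geq W^{d/2} \gg 1$ under the assumption $L \ll W^{1+d/4}$. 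Feeding this into Lemma~\ref{prop:Ess} \eqref{mainE12T} with $\Psi^2 = 1/M$ (the error there being $O_\prec(M^{-3/2}) + \delta_{iy} O_\prec(M^{-1/2})$) gives the second claim.

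To obtain the analogue of \eqref{Tfin} I would apply Lemma~\ref{lm: prof for d} to replace $\Tdet_{xy}$ by $\Pdet_{x-y}$ up to error terms already contained in $\Upsilon_{xy}$, and observe that both $(N\eta)^{-1}$ and $(M^{3/2}\eta)^{-1}$ are dominated by $\Upsilon_{xy}$ (the $p=0$ Fourier mode of $\Pdet$ contributes a flat background of order $(N\eta)^{-1}$, and the peak of the regularized Yukawa term $\frac{\eta^{d/2-1}}{W^d}(V*\varphi_{\sqrt{\eta}})$ is of order $M^{-1}$ in any dimension $d \geq 3$, or $M^{-1}\abs{\log\eta}$ in $d=2$). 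Finally, the analogue of \eqref{Tfin1} is obtained by plugging $\Omega_{ij}^2 \deq \Upsilon_{ij}$ and the same $\Psi$ into Lemma~\ref{lm:contr}; the resulting term $\delta_{ij}\sum_k \Upsilon_{ik} s_{ki}$ is controlled by the peak value $\Upsilon_{ii} \asymp M^{-1}$ after convolution with the band variance $s_{ki}$, and $\Psi^4 = M^{-2}$ is absorbed into the baseline of $\Upsilon_{ij}$.

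\textbf{Main obstacle.} The only nontrivial point beyond the 1D template is the pointwise bound $\Tdet_{xy} \leq C\Upsilon_{xy}$. In $d \geq 3$ the Yukawa potential has a $\abs{x}^{2-d}$ singularity at the origin (a logarithmic one in $d=2$), and one must check that the mollification on scale $\sqrt{\eta}$ built into $\Upsilon_{xy}$ regularizes this correctly and that, after Poisson summation over periodic images, the sum is pointwise bounded by the single $k=0$ term appearing in the theorem statement plus the polynomial-decay correction terms. This is the content of Lemma~\ref{lm: prof for d}, whose proof (deferred to Appendix~\ref{sec:prof}) is really the heart of the higher-dimensional extension; once it is in hand, all the steps above are routine modifications of the 1D proof.
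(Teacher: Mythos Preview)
Your proposal is correct and matches the paper's approach exactly: the paper itself says only that the proofs of Theorems~\ref{lm:noprofd}--\ref{lm:withprofd} are ``trivial modifications of the proofs of their one-dimensional counterparts, using Lemmas~\ref{lm:prof bound d} and~\ref{lm: prof for d}'', and you have correctly unpacked this into the sequence (Theorem~\ref{lm:noprofd} $\to$ Proposition~\ref{YleqT} $\to$ Lemma~\ref{prop:Ess} $\to$ Lemma~\ref{lm: prof for d} $\to$ Lemma~\ref{lm:contr}) that parallels Section~6.2.

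One small caution: your justification that $(N\eta)^{-1}$ is dominated by $\Upsilon_{xy}$ via ``the $p=0$ Fourier mode of $\Pdet$'' is not quite right as stated, because the $\Upsilon_{xy}$ in Theorem~\ref{lm:withprofd} contains only the $k=0$ term of the Poisson sum in \eqref{approx prof d}, not the full periodic $\Pdet_{x-y}$; at far points $|x-y|_L \sim L/2$ with $\eta \gg (W/L)^2$ the single Yukawa image is exponentially small and need not dominate $(N\eta)^{-1}$. The one-dimensional $\Upsilon$ in \eqref{Tdetbound} carries an explicit $(N\eta)^{-1}$ floor precisely for this reason; the higher-dimensional statement as written omits it, so the cleanest reading of ``the analogues of \eqref{Tfin} and \eqref{Tfin1}'' is that the errors $(N\eta)^{-1} + (M^{3/2}\eta)^{-1}$ from the first display should be tacitly added to $\Upsilon_{xy}$. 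This is a wrinkle in the theorem's formulation rather than a defect in your argument.
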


\subsection{Slowly decaying band}
In this section we make the following assumption on the band shape.
Suppose that $d = 1$ and $f$ is smooth and symmetric, and satisfies
\begin{equation} \label{heavy tail band}
f(x) \;=\; \frac{h(x)}{\abs{x}^{1 + \beta}}
\end{equation}
for some fixed $\beta \in (0,2)$. Here $h$ is a
 symmetric function satisfying
\begin{equation} \label{assumption on h}
\abs{h(x) - h_0} \;\leq\; C \avg{x}^{-3}\,
\end{equation}
for some fixed $h_0>0$.
Note that by definition $f$ is
 smooth and symmetric, so that $h(x) = O(\abs{x}^{1 + \beta})$ near the origin.

In order to avoid technical issues arising from the periodicity of $S$, we cut off the tail of $f$ at scales $x \approx N$. Thus we set
\begin{equation*}
s_{xy} \;\deq\; \frac{1}{Z} f \pbb{\frac{[x - y]_N}{W}} \sigma\pbb{\frac{[x - y]_N}{N}}\,;
\end{equation*}
here $\sigma$ is a smooth, symmetric bump function satisfying $\sigma(x) = 1$ for $\abs{x} \leq a$ and $\sigma(x) = 0$ for $\abs{x} \geq b$, where $0 < a < b < 1/2$. As usual, $Z$ is a normalization constant.

The following lemma is the analogue of Lemma \ref{lm:invert}. Its proof is similar to that of Lemma \ref{lm:invert}; the key input is Lemma \ref{lm:SW heavy tail} (iii).
\begin{lemma} \label{lm: ht gap}
Suppose that $d = 1$ and that \eqref{heavy tail band} and \eqref{assumption on h} hold. Then
\be\label{inftytoinftyht}
    \normbb{ \frac{1}{1- |m|^2 S} \, \ol \Pi}_{\infty}
   \;\leq\; \frac{C\log N}{\eta+ \pb{\frac{W}{N}}^{\beta}}\,.
\ee
\end{lemma}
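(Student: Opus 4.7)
The plan is to mimic the proof of Lemma~\ref{lm:invert} almost verbatim, replacing the diffusive gap estimate $1 - \wh S(p) \geq c(W/L)^2$ with a super-diffusive one of the form $1 - \wh S(p) \geq c|Wp|^\beta$ for $|p|\leq W^{-1}$. This latter bound is the Fourier-analytic manifestation of the fact that a random walk whose step distribution has tail $|x|^{-1-\beta}$ is in the domain of attraction of a symmetric $\beta$-stable law, and its characteristic function near the origin behaves like $1 - c|Wp|^\beta$ rather than $1 - c(Wp)^2$. I expect this statement to be exactly what Lemma~\ref{lm:SW heavy tail}~(iii) provides; the cutoff $\sigma([x-y]_N/N)$ is harmless because the removed tail contributes only $O((W/N)^\beta)$ in the relevant regime.

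First I would write, as in \eqref{SP}, $\wh S(p) = \sum_{x\in\bb T}\me^{-\ii p\cdot x}s_{x0}$, and establish the Euclidean norm bound
\begin{equation*}
\|S\,\ol\Pi\| \;=\; \max\bigl\{|\wh S(p)|\st p\in P\setminus\{0\}\bigr\} \;\le\; 1 - c\,(W/N)^\beta,
\end{equation*}
using the lower bound $1-\wh S(p)\ge c|Wp|^\beta$ on $|p|\le W^{-1}$ (valid by Lemma~\ref{lm:SW heavy tail}), a uniform bound $\wh S(p)\le 1-c$ on $|p|\ge W^{-1}$ coming from \eqref{Slow}, and $|p|\ge 2\pi/N$ for $p\in P\setminus\{0\}$. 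Combined with $|m|^2\le 1-c\eta$ from \eqref{m3}, this yields
\begin{equation*}
|m|^2\|S\,\ol\Pi\| \;\le\; 1 - c\eta - c\,(W/N)^\beta.
\end{equation*}

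Next I would transfer this $\ell^2\to\ell^2$ estimate to the $\ell^\infty\to\ell^\infty$ estimate stated in \eqref{inftytoinftyht}, following the argument at the end of the proof of Lemma~\ref{lm:invert}. Namely, I use $\|S\|_\infty\le 1$ (since $S$ is stochastic), split the geometric series
\begin{equation*}
\frac{1}{1-|m|^2 S}\,\ol\Pi \;=\; \sum_{k=0}^{k_0-1}|m|^{2k}S^k\,\ol\Pi + \sum_{k\ge k_0}|m|^{2k}S^k\,\ol\Pi,
\end{equation*}
and for any $\f v$ with $\langle\f v,\f e\rangle=0$ I bound the first sum trivially by $k_0\|\f v\|_\infty$ and the second by $\sqrt{N}\,|m|^{2k_0}\|S\ol\Pi\|^{k_0}(1-|m|^2\|S\ol\Pi\|)^{-1}\|\f v\|_\infty$, using $\|\cdot\|_2\le\sqrt{N}\|\cdot\|_\infty$. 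Choosing $k_0 = C(\log N)[\eta+(W/N)^\beta]^{-1}$ with $C$ sufficiently large makes the second term negligible, and the first term furnishes exactly the claimed bound \eqref{inftytoinftyht}.

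The only substantive step is the super-diffusive gap $1-\wh S(p)\ge c|Wp|^\beta$ on $|p|\le W^{-1}$; everything else is a rerun of Lemma~\ref{lm:invert}. Controlling this gap requires handling two technicalities: the short-distance behaviour of $f$ near the origin (where $f$ is smooth and contributes only lower-order corrections), and the truncation by $\sigma([x-y]_N/N)$, which cuts off the tail at scale $N$ and therefore alters $\wh S(p)$ by at most $O((W/N)^\beta)$, a term that is already smaller than the gap we seek. Since both adjustments are absorbed into the constant $c$, and since the corresponding $\ell^2$-bound already gives the claim up to the logarithmic factor, I do not anticipate any further difficulty.
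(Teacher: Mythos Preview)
Your proposal is correct and matches the paper's approach essentially verbatim. The paper does not spell out the proof of Lemma~\ref{lm: ht gap}, stating only that it ``is similar to that of Lemma~\ref{lm:invert}; the key input is Lemma~\ref{lm:SW heavy tail}~(iii)''; your sketch supplies exactly those details, correctly identifying that the spectral gap bound $\|S\,\ol\Pi\|\le 1-c(W/N)^\beta$ (which is Lemma~\ref{lm:SW heavy tail}~(iii), proved via the pointwise expansion in part~(ii)) replaces the diffusive gap \eqref{Snorm}, after which the $\ell^\infty$ argument from the end of the proof of Lemma~\ref{lm:invert} carries over unchanged.
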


Next, we give the sharp upper bound on the peak of the profile.
\begin{lemma} \label{lm:ht profile bound}
Suppose that $d = 1$ and that \eqref{heavy tail band} and \eqref{assumption on h} hold. Then
\begin{equation*}
\Tdet_{xy} \;\leq\; C \max \hbb{\frac{\eta^{1/\beta - 1} + 1}{W}, \frac{1}{N \eta}}\,.
\end{equation*}
\end{lemma}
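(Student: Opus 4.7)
\textbf{Proof proposal for Lemma \ref{lm:ht profile bound}.}

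The strategy is to bound $|\Tdet_{xy}|$ via Fourier analysis on the discrete torus $\bb T$, decomposing the momentum sum into the zero mode, a low-momentum regime $|p| \leq 1/W$ controlled by the anomalous small-$p$ expansion of $\wh S(p)$ forced by the heavy tail \eqref{heavy tail band}, and a high-momentum regime $|p| \geq 1/W$ controlled by rapid decay of $\wh S(p)$ coming from the smoothness of $f$. Using translation invariance of $S$,
\begin{equation*}
|\Tdet_{xy}| \;\leq\; \frac{1}{N} \sum_{p \in P} \frac{|m|^2 |\wh S(p)|}{|1 - |m|^2 \wh S(p)|}\,,
\end{equation*}
and the zero mode contributes $|m|^2 / (N(1-|m|^2)) \asymp 1/(N\eta)$ by \eqref{m3}, accounting for the second term in the claimed bound. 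What remains is to show that the non-zero modes contribute at most $C(\eta^{1/\beta - 1} + 1)/W$.

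The fundamental input is the anomalous small-momentum asymptotic
\begin{equation*}
1 - \wh S(p) \;\asymp\; \min\{(W|p|)^\beta,\, 1\}\,, \qquad |p| \leq \pi\,,
\end{equation*}
which replaces the Laplacian scaling $1 - \wh S(p) \asymp (Wp)^2$ used to prove Lemma \ref{lm:invert} and is the heavy-tail analogue thereof; this should be the content of Lemma \ref{lm:SW heavy tail} (whose invocation already appears in the proof of Lemma \ref{lm: ht gap}). Combined with $1 - |m|^2 \asymp \eta$ and a case split on the sign of $\wh S(p)$, this yields $|1 - |m|^2 \wh S(p)| \gtrsim \eta + \min\{(W|p|)^\beta, 1\}$. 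Smoothness of $f$ additionally furnishes rapid decay $|\wh S(p)| \leq C_K (W|p|)^{-K}$ for any $K$ whenever $W|p| \geq 1$, obtained by repeated integration by parts using the integrability of $f^{(K)}$ (which holds since each derivative still decays like $|x|^{-1-\beta}$ at infinity).

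Approximating $\frac{1}{N}\sum_{p \neq 0}$ by a Riemann integral and splitting at $|p| = 1/W$, the non-zero-mode contribution is controlled by
\begin{equation*}
\int_0^{1/W} \frac{dp}{\eta + (Wp)^\beta} \;+\; C_K \int_{1/W}^\pi (Wp)^{-K}\, dp \;\lesssim\; \frac{1}{W} \int_0^1 \frac{du}{\eta + u^\beta} + \frac{C}{W}\,.
\end{equation*}
The substitution $u = \eta^{1/\beta} v$ reduces the remaining integral to $\frac{\eta^{1/\beta - 1}}{W} \int_0^{\eta^{-1/\beta}} (1 + v^\beta)^{-1}\, dv$: for $\beta > 1$ the $v$-integral is uniformly bounded, giving $\eta^{1/\beta - 1}/W$; for $\beta < 1$ the $v$-integral grows like $\eta^{1 - 1/\beta}$ at its upper limit, cancelling the prefactor and producing $O(1/W)$. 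In both regimes the non-zero-mode bound is absorbed by $C(\eta^{1/\beta - 1} + 1)/W$, which combined with the zero-mode contribution yields the claim. The principal obstacle is establishing the two-sided small-$p$ expansion of $\wh S(p)$ in the presence of the periodic cutoff $\sigma$ in the definition of $s_{xy}$, which is the role of Lemma \ref{lm:SW heavy tail}; granted that input, the rest is a routine momentum-space computation. The marginal case $\beta = 1$ picks up a logarithmic correction that either requires a separate direct estimate or can be absorbed into the constant by a small perturbation of $\beta$.
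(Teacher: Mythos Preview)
Your proposal is correct and takes essentially the same approach as the paper: both bound $\Tdet_{xy}$ by dropping the phase in the Fourier representation, separate out the zero mode to produce the $1/(N\eta)$ term, and estimate the remaining momentum sum as an integral using the anomalous dispersion $1-\wh S_W(q)\asymp |q|^\beta$ for small $q$ and the rapid decay of $\wh S_W$ for $|q|\geq 1$, both supplied by Lemma~\ref{lm:SW heavy tail}. The only cosmetic difference is the split point (the paper cuts at $|q|=\eta^{1/\beta}$, you cut at $|q|=1$); your explicit flagging of the $\beta=1$ logarithmic correction is a point the paper's sketch glosses over.
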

\begin{proof}
See Appendix \ref{sec:prof}.
\end{proof}

In order to describe the asymptotic shape of the profile, we define
\begin{equation*}
B \;\deq\; h_0 \frac{W}{Z} \int_\bR \dd u \, \frac{1 - \cos u}{\abs{u}^{1 + \beta}} \;=\;
 h_0 \int_\bR \dd u \, \frac{1 - \cos u}{\abs{u}^{1 + \beta}} +
 O \pBB{\frac{1}{W} + \pbb{\frac{W}{N}}^\beta}\,,
\end{equation*}
which plays a role similar to the unrenormalized diffusion constant $D$ from \eqref{def D}. Moreover, define the function
\begin{equation*}
V(x) \;\deq\; \int_\bR \frac{\dd q}{2 \pi} \frac{\me^{\ii qx}}{1 + \abs{q}^\beta}\,,
\end{equation*}
which is bounded for $\beta > 1$. It is easy to check that for $\beta > 1$ and $\abs{x} \geq 1$ we have
\begin{equation} \label{asymp for V}
V(x) \;=\;
C_\beta \abs{x}^{-1-\beta} + O(\abs{x}^{-2-\beta})
\end{equation}
with an explicitly computable constant $C_\beta>0$.

\begin{proposition}\label{lm:profile heavy tail}
Suppose that $d = 1$ and that \eqref{heavy tail band} and \eqref{assumption on h} hold for some $\beta > 1$. Suppose moreover that
\begin{equation} \label{prof assump ht}
\pbb{\frac{W}{N}}^\beta \;\ll\; \eta \;\ll\; 1\,.
\end{equation}
Then there is a constant $c > 0$, depending on the implicit exponents in \eqref{prof assump ht}, such that for $x \in \bb T$ we have
\begin{equation} \label{Tdet ht}
\Tdet_{x0} \;=\; \frac{|m|^2}{W \alpha \eta} \pbb{\frac{\alpha \eta}{B}}^{1/\beta} \,
 V \qbb{\pbb{\frac{\alpha \eta}{B}}^{1/\beta} \, \frac{x}{W}} + O \pbb{\frac{\eta^{1/\beta - 1}}{W^{1 + c}}}
\end{equation}
\end{proposition}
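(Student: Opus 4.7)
The plan is to attack the problem via Fourier inversion on the torus, exploiting the translation invariance of $S$. Write
\begin{equation*}
\Tdet_{x0} \;=\; \frac{1}{N}\sum_{p \in \frac{2\pi}{N}\bb T} \me^{\ii p x} \, \frac{|m|^2 \wh S(p)}{1 - |m|^2 \wh S(p)}\,,
\end{equation*}
with $\wh S$ the discrete Fourier transform \eqref{SP}. The goal is to identify the leading-order small-$p$ behaviour of $\wh S(p)$, which in the slowly decaying setting is governed by a fractional power $|pW|^\beta$ rather than the quadratic term appearing in \eqref{low-p exp}.

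First I would establish the small-momentum expansion
\begin{equation*}
1 - \wh S(p) \;=\; B \, |pW|^\beta + O \pb{|pW|^{\min\{2, 2\beta\}}} + O\pb{(W/N)^\beta}\,,
\end{equation*}
valid in the regime $|pW| \ll 1$. The proof is a direct calculation: rewrite $1-\wh S(p)$ as a Riemann sum for
$\frac{W}{Z}\int (1-\cos(pWu)) \, h(u)|u|^{-1-\beta}\sigma(uW/N) \, \dd u$,
decompose $h(u) = h_0 + (h(u)-h_0)$, and use $\abs{h(u)-h_0} \leq C\avg{u}^{-3}$ to extract the main Lévy-type contribution after the substitution $v=pWu$. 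The cutoff $\sigma$ and the discretization error contribute only $O((W/N)^\beta + W^{-1})$ terms since $\beta < 2$. This identifies the constant $B$ and the fact that $B = h_0 \int (1-\cos v)|v|^{-1-\beta}\,\dd v + O(W^{-1} + (W/N)^\beta)$.

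Next, combining this with the expansion $1-|m|^2 = \alpha\eta + O(\eta^2)$ from \eqref{m2} and the bound $\wh S(p) = 1 + O(|pW|^\beta)$, I would write
\begin{equation*}
1 - |m|^2 \wh S(p) \;=\; \alpha\eta + B|pW|^\beta + O\pb{\eta|pW|^\beta + \eta^2 + |pW|^{\min\{2,2\beta\}}}
\end{equation*}
in the regime $|pW| \ll 1$. For $|pW|$ of order one or larger I would use Lemma \ref{lm: ht gap} together with the spectral-gap bound $\abs{m}^2 \abs{\wh S(p)} \leq 1 - c$ to show the corresponding contribution is $O(W^{-1-c})$, which is absorbed into the error term in \eqref{Tdet ht}. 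In the relevant regime $|p|\le \delta/W$, the characteristic momentum is $|p| \sim (\alpha\eta/B)^{1/\beta}/W$, and the assumption $(W/N)^\beta \ll \eta$ guarantees that this scale is much larger than the lattice spacing $2\pi/N$ of the dual torus. Consequently, a standard Riemann-sum-to-integral comparison (estimating the discrepancy by the derivative of the summand, integrated against the relevant density) yields
\begin{equation*}
\Tdet_{x0} \;=\; \frac{|m|^2}{2\pi}\int_\bR \me^{\ii p x} \, \frac{\dd p}{\alpha\eta + B|pW|^\beta} + O\pbb{\frac{\eta^{1/\beta -1}}{W^{1+c}}}\,.
\end{equation*}

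Finally, the change of variables $q = (\alpha\eta/B)^{-1/\beta}(pW)$ converts the integral to $(\alpha\eta)^{1/\beta - 1} B^{-1/\beta} W^{-1} V\pb{(\alpha\eta/B)^{1/\beta} x/W}$, which is the main term of \eqref{Tdet ht}. The hypothesis $\beta > 1$ is essential at this last stage: it guarantees $\int \dd q/(1+|q|^\beta) < \infty$, so $V$ is bounded and the formula makes sense pointwise in $x$. The main technical obstacle will be bookkeeping the various error sources so that each is dominated by $\eta^{1/\beta - 1}/W^{1+c}$; in particular the quadratic remainder $|pW|^{\min\{2,2\beta\}}$ in the expansion of $\wh S$ must be integrated against the kernel and shown to gain a factor $W^{-c}$ relative to the main term, which follows from $\beta < 2$ combined with the characteristic momentum scale being $\ll W^{-1}$.
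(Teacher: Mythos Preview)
Your strategy is essentially the paper's: Fourier representation of $\Tdet_{x0}$, a small-momentum expansion of $\wh S$ with leading term $B\abs{pW}^\beta$, splitting off the large-$\abs{pW}$ modes by a spectral gap, and a change of variables to produce $V$. Your choice of a fixed cutoff at $\abs{pW}\sim\delta$ rather than the paper's optimized scale $\epsilon=\eta^{1-1/\beta}$ is harmless: with a fixed $\delta$ the high-mode and cutoff-removal errors are $O(W^{-1})$, and since $\eta\ll 1$ gives $\eta^{1-1/\beta}\le W^{-c}$, this is already $O(\eta^{1/\beta-1}/W^{1+c})$. Two minor points: the reference you want for the pointwise gap $\abs{m}^2\abs{\wh S(p)}\le 1-c$ at $\abs{pW}\gtrsim 1$ is the pointwise Lemma on $\wh S_W$ (parts (i)--(ii)), not the operator-norm Lemma~\ref{lm: ht gap}; and the high-mode tail is $O(W^{-1})$, not $O(W^{-1-c})$, though as just noted that suffices.

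The one genuine gap is your passage from the lattice sum to the integral. A ``derivative of the summand'' bound on
\[
\frac{1}{N}\sum_{p}\me^{\ii p x}\,\frac{1}{\alpha\eta+B\abs{pW}^\beta}
-\frac{1}{2\pi}\int_\R\me^{\ii p x}\,\frac{\dd p}{\alpha\eta+B\abs{pW}^\beta}
\]
produces an error $\lesssim N^{-1}\bigl(\abs{x}\,\eta^{1/\beta-1}/W+\eta^{-1}\bigr)$, since differentiating the phase contributes a factor $\abs{x}$. For $\abs{x}$ of order $N$ this is the same size as the main term and does not close. The paper avoids this by applying Poisson summation directly to the lattice sum, which yields exactly
\[
\frac{|m|^2}{W\alpha\eta}\Bigl(\tfrac{\alpha\eta}{B}\Bigr)^{1/\beta}\sum_{k\in\Z}V\!\Bigl[\Bigl(\tfrac{\alpha\eta}{B}\Bigr)^{1/\beta}\tfrac{x+kN}{W}\Bigr],
\]
so the ``integral'' is the $k=0$ term and the discretization error is the $k\neq 0$ tail. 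For $x\in\bb T$ and $k\neq 0$ one has $\abs{x+kN}\ge N/2$, and the power-law decay $V(y)\asymp\abs{y}^{-1-\beta}$ from \eqref{asymp for V} combined with $(W/N)^\beta\ll\eta$ shows these terms are $O(\eta^{1/\beta-1}/W^{1+c})$. Replacing your Riemann-sum step by Poisson summation (or, equivalently, summing by parts on the lattice to extract decay in $x$) fixes the argument.
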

\begin{proof}
See Appendix \ref{sec:prof}.
\end{proof}

The matrix $\Tdet$ is the resolvent of a \emph{superdiffusive} operator, whose symbol in Fourier space is $B \abs{W p}^\beta$. Thus, under the identification $t = \eta^{-1}$ from Remark \ref{rem: old result}, we find that the associated dynamics scales according to $x \sim W t^{1/\beta}$ instead of the diffusive scaling $x \sim W t^{1/2}$. 

We may now state the counterparts of Theorem \ref{lm:noprof}, Corollary \ref{cor:noprof}, and Theorem \ref{lm:withprof} for the slowly decaying band. Their proofs are trivial modifications of those for the strongly decaying band, using Lemmas \ref{lm: ht gap} and \ref{lm:ht profile bound}.

\begin{theorem}[Improved local semicircle law]\label{lm:noprofht}
Suppose that $d = 1$ and that \eqref{heavy tail band} and \eqref{assumption on h} hold. Suppose moreover that $N \ll W^{1 + 1/2\beta}$ and $\eta \gg (N/W)^\beta / W$. Then we have
\be
   \Lambda^2 \;\prec\; \max \hbb{\frac{\eta^{1/\beta - 1} + 1}{W} , \frac{1}{N \eta}}
\ee
for $z\in \f S$.
\end{theorem}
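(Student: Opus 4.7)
The plan is to follow the iterative self-improving argument of Section~\ref{sec:noprof} essentially verbatim, replacing the two model-specific inputs by their heavy-tail analogues: the $\ell^\infty\to\ell^\infty$ bound \eqref{inftytoinfty} on $(1-|m|^2S)^{-1}\ol\Pi$ is replaced by \eqref{inftytoinftyht} from Lemma~\ref{lm: ht gap}, and the peak bound \eqref{supexpl} on the deterministic profile $\Tdet$ is replaced by Lemma~\ref{lm:ht profile bound}. Writing $\Phi_\beta^2 \deq \max\{(\eta^{1/\beta-1}+1)/W,\, (N\eta)^{-1}\}$ for the target, one checks that $\Phi_\beta$ is admissible in the regime under consideration, so Corollary~\ref{cor:LO} will be applicable.

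Starting from the a priori bound $\Lambda\prec \Psi_0 \deq (W\eta)^{-1/2}$ supplied by Lemma~\ref{lm:lsc} (since $M\asymp W$), I will suppose inductively that $\Lambda\prec \Psi$ for some admissible $\Psi$. Theorem~\ref{thm:T} then provides the self-consistent equation \eqref{Tself} with error $\cE_{xy}\prec \Psi^4+\Psi^2 W^{-1/2}$. Solving it along the lines of Proposition~\ref{YleqT}, but now invoking \eqref{inftytoinftyht} in place of \eqref{inftytoinfty}, the deterministic main term is controlled by Lemma~\ref{lm:ht profile bound} together with the identity $\ol T_y + |m|^2 ((S-\Pi)/(1-|m|^2 S))_{xy} = \Tdet_{xy} + O_\prec(\text{lower order})$, while the error acquires the factor $(\eta+(W/N)^\beta)^{-1}$ coming from the sharpened gap estimate. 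This will yield
\[
T_{xy} \;\prec\; \Phi_\beta^2 \;+\; \frac{1}{\eta+(W/N)^\beta}\pb{\Psi^4+\Psi^2 W^{-1/2}}\,,
\]
whereupon Corollary~\ref{cor:LO} upgrades it to the self-improvement
\[
\Lambda^2 \;\prec\; \Psi^2 \quad \Longrightarrow \quad \Lambda^2 \;\prec\; \Phi_\beta^2 \;+\; \frac{1}{\eta+(W/N)^\beta}\pb{\Psi^4+\Psi^2 W^{-1/2}}\,,
\]
the heavy-tail counterpart of \eqref{stro}.

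To finish, I will iterate this implication finitely many times, exactly as in the proof of Theorem~\ref{lm:noprof}. Using the trivial bound $(\eta+(W/N)^\beta)^{-1} \leq (N/W)^\beta$, convergence requires $(N/W)^\beta \Psi_0^2 \ll 1$ and $(N/W)^\beta W^{-1/2} \ll 1$, which are equivalent to $\eta \gg (N/W)^\beta/W$ and $N \ll W^{1+1/(2\beta)}$ respectively---exactly the two hypotheses of the theorem. I expect the main obstacle in adapting Section~\ref{sec:noprof} to lie not in this iteration (which is structurally identical to the rapidly decaying case) but in the two heavy-tail inputs of Section~\ref{sec:gen}: in particular, the sharp peak bound of Lemma~\ref{lm:ht profile bound} requires a delicate Fourier analysis of the long-range convolution kernel $S$, and is responsible for the correct $\eta$-dependence $\eta^{1/\beta-1}/W$ that characterises the superdiffusive regime.
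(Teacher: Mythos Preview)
Your proposal is correct and follows precisely the route the paper intends: the paper states that the proof of Theorem~\ref{lm:noprofht} is a ``trivial modification'' of Section~\ref{sec:noprof}, using Lemma~\ref{lm: ht gap} in place of Lemma~\ref{lm:invert} and Lemma~\ref{lm:ht profile bound} in place of \eqref{supexpl}, which is exactly what you do. Your derivation of the two iteration conditions $(N/W)^\beta\Psi_0^2\ll1$ and $(N/W)^\beta W^{-1/2}\ll1$ and their identification with the hypotheses $\eta\gg(N/W)^\beta/W$ and $N\ll W^{1+1/(2\beta)}$ is the heavy-tail analogue of \eqref{init}, as you say.
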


\begin{corollary}[Eigenvector delocalization]\label{cor:noprofht}
Suppose that $d = 1$ and that \eqref{heavy tail band} and \eqref{assumption on h} hold. If $N \ll W^{1 + 1/2\beta}$ then the eigenvectors of $H$ are completely delocalized in the sense of Proposition \ref{lemma: simple deloc}.
\end{corollary}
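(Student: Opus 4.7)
The strategy is identical to that of Corollary~\ref{cor:noprof}: reduce the problem, via Proposition~\ref{lemma: simple deloc}, to exhibiting a single scale $\eta$ at which the resolvent is completely delocalized in the sense that
\begin{equation*}
\sup_{E \in I} \absb{G_{xy}(E + \ii \eta)}^2 \;\prec\; \frac{1}{N\eta} + \delta_{xy}\,.
\end{equation*}
Since $\abs{m}$ is bounded in the bulk by \eqref{m is bounded}, a bound $\Lambda^2 \prec (N\eta)^{-1}$ (which, as $\f S$ is indexed by both $E$ and $\eta$, is automatically uniform in $E$) will yield this hypothesis directly from $\abs{G_{xy}}^2 \leq 2\Lambda^2 + 2\delta_{xy}\abs{m}^2$.

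To produce such an $\eta$, we invoke Theorem~\ref{lm:noprofht}, which under the standing hypothesis $N \ll W^{1 + 1/(2\beta)}$ and the additional requirement $\eta \gg (N/W)^{\beta}/W$ gives
\begin{equation*}
\Lambda^2 \;\prec\; \max\hbb{\frac{\eta^{1/\beta - 1} + 1}{W},\, \frac{1}{N\eta}}\,.
\end{equation*}
An elementary computation shows that the first entry in the maximum is dominated by the second precisely when $\eta \leq c\min\{W/N,\,(W/N)^{\beta}\}$ for some small constant $c > 0$. I therefore plan to choose $\eta$ in the window
\begin{equation*}
\frac{(N/W)^{\beta}}{W} \;\ll\; \eta \;\ll\; \min\hb{W/N,\,(W/N)^{\beta}}\,,
\end{equation*}
which is the direct heavy-tail analogue of the choice $W^{-1/2} \leq \eta \leq (W/N)^2$ used in the rapidly decaying case.

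The only point that requires checking is that this window is non-empty under the given hypothesis. The binding requirement is $(N/W)^\beta/W \ll (W/N)^\beta$, i.e.\ $N^{2\beta} \ll W^{2\beta+1}$, which is literally the condition $N \ll W^{1 + 1/(2\beta)}$; the auxiliary bound $(N/W)^\beta/W \ll W/N$ is equivalent to $N \ll W^{(2+\beta)/(1+\beta)}$, a consequence of $N \ll W^{1+1/(2\beta)}$ in the regime $\beta \geq 1$ where it matters. For any such $\eta$ we have $\eta \gg W^{-1} \geq M^{-1}$, so $\eta$ lies in the admissible range $M^{-1+\gamma} \leq \eta \ll 1$ demanded by Proposition~\ref{lemma: simple deloc}.

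Given this preparation, the remainder of the proof is mechanical: Theorem~\ref{lm:noprofht} applied at this $\eta$ yields $\Lambda^2 \prec (N\eta)^{-1}$, hence $\abs{G_{xy}}^2 \prec (N\eta)^{-1} + \delta_{xy}$, and Proposition~\ref{lemma: simple deloc} then produces the delocalization statement. There is no real obstacle beyond verifying the arithmetic of the exponents; all the analytic content is absorbed into Theorem~\ref{lm:noprofht} and Proposition~\ref{lemma: simple deloc}, both of which are already in place.
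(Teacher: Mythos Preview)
Your approach is exactly the one the paper intends: it does not give a separate proof of this corollary, merely noting that it is a ``trivial modification'' of the proof of Corollary~\ref{cor:noprof}, i.e.\ pick an $\eta$ in the right window and feed the resulting complete resolvent delocalization into Proposition~\ref{lemma: simple deloc}. For $\beta \geq 1$ your argument is complete and matches the paper's intent.

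There is, however, a slip in your window verification for $\beta < 1$. You write that the auxiliary bound $(N/W)^\beta/W \ll W/N$ only matters ``in the regime $\beta \geq 1$'', but this is backwards. Since $W/N \leq 1$, for $\beta < 1$ one has $(W/N)^\beta \geq W/N$, so $\min\{W/N,(W/N)^\beta\} = W/N$ and the auxiliary bound is the \emph{binding} constraint. That bound is equivalent to $N \ll W^{1+1/(\beta+1)}$, which for $\beta < 1$ is strictly stronger than the hypothesis $N \ll W^{1+1/(2\beta)}$ (e.g.\ $\beta = 1/2$ gives $W^{5/3}$ versus $W^2$). So your non-emptiness check does not go through for $\beta \in (0,1)$. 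The paper's own treatment is informal enough that this subtlety is not visible there either; for $\beta \geq 1$ (the range in which the neighbouring results are stated anyway) the argument is fine as written.
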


\begin{theorem}[Diffusion profile]\label{lm:withprofht}
Suppose that $d = 1$ and that \eqref{heavy tail band} and \eqref{assumption on h} hold for some $\beta \geq 1$. Suppose that $N \ll W^{1 + 1/2\beta}$ and $(W/N)^\beta \le \eta \le 1$.
Then
\be
 |T_{xy} -\Tdet_{xy}| \;\prec\; \frac{1}{N\eta}\,, \qquad
  \absB{P_x |G_{xy}|^2 - \delta_{xy}|m|^2- \abs{m}^2 \Tdet_{xy}} \;\prec\; \frac{1}{N\eta}
 + \frac{\delta_{xy}}{\sqrt{W}}\,.
\ee
Moreover, the analogues of \eqref{Tfin} and \eqref{Tfin1} hold.
\end{theorem}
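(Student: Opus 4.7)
The strategy is to repeat the proof of Theorem \ref{lm:withprof} line by line, replacing the diffusive ingredients (Lemma \ref{lm:invert} and Proposition \ref{prop:profile}) with their superdiffusive counterparts (Lemma \ref{lm: ht gap}, Lemma \ref{lm:ht profile bound}, and Proposition \ref{lm:profile heavy tail}). The assumption $\beta \geq 1$ together with $\eta \leq 1$ gives $\eta^{1/\beta - 1} \leq 1$, so Theorem \ref{lm:noprofht} supplies the a priori bound $\Lambda \prec \Psi$ with the admissible parameter $\Psi^2 \deq \frac{1}{W} + \frac{1}{N\eta}$; moreover $\Psi^2 \leq W^{-1/2}$ in the relevant regime, as we check below.

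The self-consistent equation of Theorem \ref{thm:T} is unchanged. To solve it I would follow Proposition \ref{YleqT}, using Lemma \ref{lm: ht gap} in place of Lemma \ref{lm:invert}. This gives
\begin{equation*}
T_{xy} \;=\; \ol T_y + |m|^2 \pbb{\frac{S - \Pi}{1 - |m|^2 S}}_{xy} + \wt \cE_{xy}\,, \qquad \max_{x,y} \absb{\wt\cE_{xy}} \;\prec\; \frac{1}{\eta + (W/N)^\beta}\pb{\Psi^4 + \Psi^2 W^{-1/2}}\,.
\end{equation*}
In the regime $\eta \ge (W/N)^\beta$ the prefactor is $O(\eta^{-1})$, so it remains to verify that $\Psi^4 + \Psi^2 W^{-1/2} \prec N^{-1}$. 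Expanding $\Psi^4$, each of the four terms $W^{-2}$, $(N\eta)^{-2}$, $W^{-3/2}$, $(N \eta W^{1/2})^{-1}$ must be $\prec N^{-1}$. The bound $N \ll W^{1 + 1/(2\beta)}$ and $\beta \geq 1$ give $N \ll W^{3/2}$, which handles $W^{-2}$ and $W^{-3/2}$; the bound on $(N\eta W^{1/2})^{-1}$ reduces to $\eta \gg W^{-1/2}$, and since $\eta \geq (W/N)^\beta \geq W^{-1/2}$ is equivalent to $N \le W^{1 + 1/(2\beta)}$, this holds as well; finally $(N\eta)^{-2} \prec N^{-1}$ reduces to $N\eta^2 \ge 1$, which follows from $\eta \ge (W/N)^\beta$ combined with $N \ll W^{1 + 1/(2\beta)}$.

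Combining this with $\ol T_y = \frac{\im m}{N\eta}(1 + O_\prec(\Psi/\sqrt{\eta}))$ from \eqref{barTbound} and the identity $\frac{\im m}{N\eta}\Pi + |m|^2 \frac{S - \Pi}{1 - |m|^2 S} = |m|^2 \frac{S}{1 - |m|^2 S} = \Tdet$ (which uses \eqref{id m im m} and $\Pi S = S \Pi = \Pi$), I obtain $|T_{xy} - \Tdet_{xy}| \prec \frac{1}{N\eta}$. The second estimate follows from the first by applying \eqref{mainE12T} pointwise in $i = x$, noting that the error $\Psi^4 + \Psi^2 W^{-1/2}$ is $\prec (N\eta)^{-1}$ off the diagonal and that $\Psi^2 \prec W^{-1/2}$ on the diagonal. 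For the analogues of \eqref{Tfin} and \eqref{Tfin1} I would define
\begin{equation*}
\Upsilon_{xy} \;\deq\; \frac{1}{N\eta} + \frac{\eta^{1/\beta - 1}}{W} \, V\qbb{\pbb{\frac{\alpha\eta}{B}}^{1/\beta}\frac{[x-y]_N}{W}} + \frac{\eta^{1/\beta - 1}}{W^{1+c}}\,,
\end{equation*}
so that Proposition \ref{lm:profile heavy tail} yields $\Tdet_{xy} \leq C \Upsilon_{xy}$ in the superdiffusive regime (together with Lemma \ref{lm:ht profile bound} for uniform control); then \eqref{Tfin} follows from the first estimate of the theorem, and \eqref{Tfin1} follows by feeding $T_{xy} \prec \Upsilon_{xy}$ and $\Lambda \prec \Psi$ into Lemma \ref{lm:contr}, absorbing the $\Psi^4$ term into $(N\eta)^{-1}$ and the $\sum_k \Upsilon_{ik} s_{ki}$ term into the peak of $\Upsilon_{ii} \asymp \eta^{1/\beta - 1}/W$.

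The main technical obstacle is checking that the superdiffusive error $(\Psi^4 + \Psi^2 W^{-1/2})/\eta$ is swallowed by $(N\eta)^{-1}$: the threshold $N \ll W^{1 + 1/(2\beta)}$ is tight precisely because of the $\Psi^2 W^{-1/2}$ contribution in the off-diagonal fluctuation average. A secondary issue is the correct shape of $\Upsilon_{xy}$ in the regime $|x - y| \leq W$, where the Yukawa-type potential $V$ may be singular; here one has to verify that $V$ is uniformly bounded (which holds for $\beta > 1$) or alternatively replace the pointwise asymptotic from Proposition \ref{lm:profile heavy tail} by the uniform bound of Lemma \ref{lm:ht profile bound} on the band scale, both of which give the same order of magnitude $\eta^{1/\beta - 1}/W$ at the peak.
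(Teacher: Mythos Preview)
Your overall approach is exactly what the paper intends: it states that the proof is a ``trivial modification'' of that of Theorem~\ref{lm:withprof}, using Lemma~\ref{lm: ht gap} and Lemma~\ref{lm:ht profile bound} in place of their diffusive counterparts, and your outline carries this out in detail.

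There is, however, a sign slip that propagates through your numerics. For $\beta \geq 1$ the exponent $1/\beta - 1$ is \emph{nonpositive}, so for $\eta \leq 1$ one has $\eta^{1/\beta - 1} \geq 1$, not $\leq 1$. Consequently Theorem~\ref{lm:noprofht} gives, in the regime $\eta \geq (W/N)^\beta$, the admissible control parameter
\[
\Psi^2 \;\asymp\; \frac{\eta^{1/\beta - 1}}{W}\,,
\]
which is larger than your $\frac{1}{W} + \frac{1}{N\eta}$ whenever $\beta > 1$. Your subsequent verification that $\Psi^4 + \Psi^2 W^{-1/2} \prec N^{-1}$ therefore checks the wrong quantities. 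Fortunately the conclusion survives with the correct $\Psi$: one needs $\eta^{1/\beta - 1} W^{-3/2} \leq N^{-1}$ and $\eta^{2/\beta - 2} W^{-2} \leq N^{-1}$, and both reduce, at the worst-case endpoint $\eta = (W/N)^\beta$, precisely to $N \ll W^{1 + 1/(2\beta)}$ (the first gives $N^\beta \ll W^{\beta + 1/2}$; the second gives $N^{2\beta - 1} \ll W^{2\beta}$, which is weaker). So the argument is correct once $\Psi$ is fixed, but your stated calculations need to be redone with the right parameter. The same correction affects the diagonal error in the second displayed estimate, where you need $\Psi^2 \prec W^{-1/2}$; this also holds, since $\eta^{1/\beta - 1}/W \leq W^{-1/2}$ is again equivalent (at $\eta = (W/N)^\beta$) to a condition implied by $N \ll W^{1+1/(2\beta)}$.
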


\appendix

\section{The deterministic profile}\label{sec:prof}

In this appendix we establish bounds and asymptotics for the deterministic profile $\Tdet_{xy}$.

\subsection{Proof of Proposition \ref{prop:profile}}
We assume $d = 1$ and abbreviate $\bb T \equiv \bb T_N^1$ as well as $P \equiv P_N^1$. We assume without loss of generality that $x \in \bb T$.

First we notice that by the symmetry of $f$ and \eqref{decay of f},
the Fourier transform $\wh f(q) \deq \int_\R \me^{-\ii qx} f(x) \, \rd x$
is a smooth function with a Taylor expansion
\be\label{ftaylor}
   \wh f(q) \;=\; 1 - D_\infty q^2 + q^4 g(q) \qquad (q \in \R)\,,
\ee
where $g$ is a bounded smooth function satisfying $g(q)=g(-q)$.
Clearly, $\wh f$ is real and $\|\wh f\|_\infty\le 1$.
Moreover, we claim
that for any  $\e>0$ there exists an $\e'>0$   such that
\be
  \wh f(q) \;\le\; 1-\e' \qquad \text{if} \qquad  |q|\;\ge\; \e\,;
\label{fq}
\ee
indeed, this follows easily from the identity
$$
  1- \wh f(q) \;=\; \int_\R \pb{1 - \cos(qx)} f(x) \,\rd x\,. 
$$

Next, define the lattice
\begin{equation*}
Q \;\deq\; W P \;=\; \frac{2 \pi W}{N} \bb T\,.
\end{equation*}
For $q \in [-\pi W, \pi W]$ define
$$
  \wh S_W(q) \;\deq\; \wh S(q/W)\,.
$$
Thus we get
\be
\pbb{\frac{S}{1-|m|^2 S}}_{x0}
\;=\; 
\frac{1}{N}\sum_{p\in P} 
\me^{\ii p x} \frac{\wh S(p)}{1-|m|^2 \wh S(p)}
\;=\;  \frac{1}{N}\sum_{q\in Q} 
\me^{\ii q x/W} \frac{\wh S_W(q) }{1-|m|^2 \wh S_W(q)}
\label{Ssum1app}
\ee
for $x \in \bb T$.

As a guide for intuition, we have $\wh S_W(q) \approx \wh f(q)$, as can be seen from 
\begin{equation} \label{S approx f}
\wh S_W(q) \;=\; \sum_{x \in \bb T} \me^{-\ii x q / W} \frac{1}{Z_{N,W}} f \pbb{\frac{x}{W}} \;\approx\; \int_{-N/2}^{N/2} \me^{-\ii x q / W} f \pbb{\frac{x}{W}} \frac{1}{W} \, \dd x \;=\; \int_{-N/2W}^{N/2W} \me^{-\ii q y} f(y) \, \dd y \;\approx\; \wh f(q)\,.
\end{equation}
Thus, our proof consists in controlling the error in the approximation
$$
  \Big(\frac{S}{1-|m|^2 S}\Big)_{x0}\;\approx\;
   \frac{1}{W} \int_\bR 
\me^{\ii qx/W} \frac{\wh f(q) }{1-|m|^2\wh f(q)}\rd q\,.
$$

As a first step, we establish basic properties of $\wh S_W$ that are analogous to \eqref{fq} and \eqref{ftaylor}.

\begin{lemma}\label{lm:SW} The function $\wh S_W$ is smooth with uniformly bounded derivatives, real, and symmetric
with $\abs{\wh S_W(q)} \le 1$ and $\wh S_W(0)=1$. Moreover, it has the following properties.
\begin{enumerate}
\item
For any $\e>0$ there exists an $\e'>0$ such that 
\be
  \wh S_W(q)\le 1-\e' \qquad \text{if} \qquad |q| \;\ge\; \e
\label{Sq}
\ee
for large enough $W$ (depending on $\e$).
\item
For any $k,K \in \N$ there is a constant $C_{k,K}$ such that
\be\label{SWdecay}
|\partial_q^k \wh S_W(q)| \;\le\; \frac{C_{k,K}}{\langle q\rangle^K} + O(N^{-1})\,, \qquad (q \in [-\pi W, \pi W])\,.
\ee
\item
There exists a smooth function $g_W$ whose derivatives are bounded uniformly in $W$ such that
\be
    \wh S_W(q) = 1- D q^2 + q^4 g_W(q)\,, \qquad (q \in [-\pi W, \pi W])\,.
\label{SWF}
\ee
\end{enumerate}
\end{lemma}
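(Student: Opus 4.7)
The plan is to first dispatch the basic properties stated before (i)--(iii), which are immediate from the definition \eqref{SP}: $\wh S_W$ is a finite trigonometric polynomial (hence smooth with all derivatives bounded on bounded sets), $s_{x0}$ is real and even in $x$ (hence $\wh S_W$ is real and even), and $\wh S_W(0) = \sum_x s_{x0} = 1$ by \eqref{stoch}, which also yields $|\wh S_W(q)| \le 1$.

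The technical core will be a Poisson-summation comparison of $\wh S_W$ with $\wh f$ on one period $q \in [-\pi W, \pi W]$. First I would extend the sum in \eqref{SP} from $\bb T$ to $\Z$; the discarded boundary terms $|x| > N/2$ contribute at most $O(N^{-1})$ to $\wh S_W$ by \eqref{decay of f} together with $Z_{N,W} \asymp W$ (in fact $Z_{N,W} = W(1 + O(W^{-K}))$ for any $K$ by super-polynomial convergence of the Riemann sum for the Schwartz function $f$). Poisson summation then gives
\begin{equation*}
\wh S_W(q) \;=\; \sum_{k' \in \Z} \wh f(q + 2\pi k' W) + O_K(W^{-K} + N^{-1})\,,
\end{equation*}
whose $k' = 0$ term is exactly $\wh f(q)$ and whose tail is $O(W^{-K})$ by the Schwartz decay of $\wh f$ combined with $|q + 2\pi k' W| \ge \pi W |k'|$ on the period. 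Applying the identical argument to the Fourier transform of $(-\ii y)^k f(y)$ (also Schwartz) will yield the derivative version,
\begin{equation*}
\partial_q^k \wh S_W(q) \;=\; \partial^k \wh f(q) + O_{k,K}(W^{-K} + N^{-1})\,.
\end{equation*}

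Given this comparison, the three parts should follow quickly. Part (ii) is immediate from the Schwartz bound $|\partial^k \wh f(q)| \le C_{k,K}\avg{q}^{-K}$. For (i), I would split $\{\e \le |q| \le \pi W\}$ into a bounded annulus $\e \le |q| \le q_0$ and a tail $q_0 \le |q| \le \pi W$: on the annulus, \eqref{fq} gives $\wh f(q) \le 1 - \e'$, so the $o(1)$ comparison error yields $\wh S_W(q) \le 1 - \e'/2$ for $W$ large, while on the tail the Schwartz decay of $\wh f$ can be made $|\wh f(q)| \le 1/2$ by taking $q_0$ large. The $2\pi W$-periodicity of $\wh S_W$ then extends the bound to all $q$. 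For (iii), the evenness of $\wh S_W$ forces all odd Taylor coefficients at $0$ to vanish; direct differentiation under the sum in \eqref{SP} computes $\wh S_W(0) = 1$ and $\wh S_W''(0) = -\sum_x (x/W)^2 s_{x0} = -2D$ from \eqref{def D}, after which Taylor's theorem with integral remainder yields
\begin{equation*}
g_W(q) \;=\; \int_0^1 \frac{(1-u)^3}{6} \wh S_W^{(4)}(qu)\,\dd u\,,
\end{equation*}
so that $\wh S_W(q) = 1 - Dq^2 + q^4 g_W(q)$; differentiating under the integral gives $|\partial_q^\ell g_W(q)| \le C \sup_t |\wh S_W^{(4+\ell)}(t)|$, which is $O(1)$ uniformly in $W$ by part (ii).

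The main obstacle will be the uniform bookkeeping across the full range $N^\delta \le W \le L$: when $W \asymp L$ the boundary truncation of the $\bb T$-sum is only $O(N^{-1})$, not $O(W^{-K})$, which is precisely why the additive $O(N^{-1})$ appears in (ii). Everything else is a careful tracking of Schwartz decay and Riemann-sum convergence, which the Poisson-summation identity cleanly packages.
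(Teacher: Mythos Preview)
Your approach is genuinely different from the paper's. The paper proves (ii) by direct Abel summation (summation by parts) on the finite sum $\sum_{x \in \bb T}$, iterated $K$ times to extract the $\langle q\rangle^{-K}$ decay; the boundary term at $x = N_+$ and the successive divided differences of $f$ furnish the two error sources. For (iii), the paper writes $g_W$ explicitly as
\[
g_W(q) \;=\; \frac{1}{Z_{N,W}} \sum_{x \in \bb T} \frac{1 - (qx/W)^2/2 - \cos(qx/W)}{(qx/W)^4}\,\pbb{\frac{x}{W}}^4 f\pbb{\frac{x}{W}}
\]
and observes that $t \mapsto (1-t^2/2-\cos t)/t^4$ is smooth with bounded derivatives. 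Your Poisson-summation comparison $\wh S_W = \wh f + o(1)$ is cleaner and more conceptual: it reduces (i)--(iii) to the corresponding facts \eqref{fq}, \eqref{ftaylor}, and the Schwartz decay of $\wh f$, which the paper has already recorded. Your Taylor-remainder formula for $g_W$ is an equivalent repackaging of the paper's explicit sum.

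One caveat: your claim that the $\bb T \to \Z$ truncation contributes $O(N^{-1})$ fails at the upper endpoint $W \asymp N$. The discarded tail $\frac{1}{Z}\sum_{|x|>N/2}|f(x/W)|$ is of order $(W/N)^{K-1}$ for any fixed $K$, which is small only when $W/N$ is bounded away from $1$; likewise $Z_{N,W} = W(1+O(W^{-K}))$ breaks there since $Z_{N,W} \approx W\int_{-N/(2W)}^{N/(2W)}f$. Your Poisson route therefore needs the mild extra hypothesis $W \le N^{1-c}$, whereas the paper's Abel-summation argument works directly on the truncated sum and never extends to $\Z$. (The paper's own bound on the Abel boundary term is also $(W/N)^{K-1}$, so this endpoint is delicate in both write-ups; in the applications the lemma is only invoked with $W \ll N$.)
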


\begin{proof}
The proof of  (i) is similar to that of \eqref{fq}.

To prove (ii), we use summation by parts combined with \eqref{decay of f}. Let the integers $N_-$ and $N_+$ denote the end points of $\bb T=[N/2, N/2)$, i.e.\ $\bb T = [N_-, N_+]$. Then we find
\begin{align}
\wh S_W(q) &\;=\; \frac{1}{Z_{N,W}}
\sum_{x=N_-}^{N_+} \me^{-\ii qx/W} f \pbb{\frac{x}{W}}
\notag \\
&\;=\; \frac{1}{Z_{N,W}} \sum_{x = N_-}^{N_+ - 1} \sum_{y = N_-}^x \me^{-\ii q y/W} \qbb{f\pbb{\frac{x}{W}} - f \pbb{\frac{x+1}{W}}} + \frac{1}{Z_{N,W}} f \pbb{\frac{N_+}{W}} \sum_{y = N_-}^{N_+} \me^{-\ii q y / W}
\notag \\
&\;=\; \frac{1}{Z_{N,W}} \sum_{x = N_-}^{N_+ - 1} \frac{\me^{-\ii q N_- / W} - \me^{-\ii q (x+1) / W}}{1 - \me^{-\ii q / W}} \qbb{f\pbb{\frac{x}{W}} - f \pbb{\frac{x+1}{W}}} + O_K \pbb{\frac{N}{W} \pbb{\frac{W}{N}}^K}
\notag \\
&\;\leq\; \frac{C_K}{W} \sum_x \frac{1}{\abs{1 - \me^{-\ii q / W}}} \frac{1}{W} \avgbb{\frac{x}{W}}^{-K} +  C_K \pbb{\frac{W}{N}}^{K - 1}
\notag \\ \label{SW}
&\;\leq\; \frac{C}{\abs{q}} + O(N^{-1})\,.
\end{align}
Here we used that
$$
   \frac{1}{\abs{1 - \me^{- \ii q/W}}} \;\le\; \frac{CW}{|q|}
$$
(recall that $\abs{q} \leq \pi W$) and that 
$$ 
\absbb{ f\Big(\frac{x}{W}\Big) - f\Big(\frac{x+1}{W}\Big) } \;\le\; \frac{C_K}{W}
  \Big\langle \frac{x}{W}\Big\rangle^{-K}
$$ 
to estimate the main term, and \eqref{lower bound on W} to estimate
the error term with $K\sim 1/\delta$. We also have the trivial bound on 
$|\wh S_W|\le 1$. Thus we have
$$
   |\wh S_W(q)| \;\le\; \frac{C}{\langle q\rangle} + O(N^{-1})\,.
$$
We can iterate the above argument for the main term in \eqref{SW}, thus obtaining higher order divided differences of $f$. Since $f$ is smooth and decays rapidly, \eqref{SWdecay} follows 
for $k=0$. The proof for $k>0$ is analogous.

In order to prove (iii), we write
\begin{equation*}
g_W(q) \;\deq\; \frac{1 - D q^2 - \wh S_W(q)}{q^4} \;=\; \frac{1}{Z_{N,W}} \sum_{x \in \bb T} \frac{1 - (qx/W)^2/2 - \cos(qx/W)}{(qx/W)^4}  \pbb{\frac{x}{W}}^4 f \pbb{\frac{x}{W}}\,.
\end{equation*}
Now (iii) follows from the fact that the function $h(q) \deq \pb{1 - q^2/2 -\cos(q)} q^{-4}$ is smooth and its derivatives are bounded.
\end{proof}

Having proved Lemma \ref{lm:SW}, we may now complete the proof of Proposition \ref{prop:profile}.
Fix a small constant $\e>0$ and introduce a partition of unity $\chi+\ol\chi = 1$ on $\bR$ 
with smooth functions such that $\chi(q) = 1$ for $|q|\le \e$
and $\chi(q) = 0$ for $|q|\ge 2\e$.
Since $\wh S_W(q)\le 1$ and $|m|^2\le 1- c\eta$ (see \eqref{m3}), 
we have
$$
  1- |m|^2 \wh S_W(q) \;\ge\; 1-|m|^2 \;\ge\; c\eta
$$
with some positive constant $c$. By  \eqref{Sq} we have on the support of $\ol\chi$
\begin{equation} \label{lower bound for SW}
  1- |m|^2 \wh S_W(q) \;\ge\;  \e'
\end{equation}
for some $\epsilon'$ depending on $\epsilon$.
Then from \eqref{Ssum1app} we have
\begin{align}
\pbb{\frac{S}{1-|m|^2 S}}_{x0}
 &\;=\;\frac{1}{N}\sum_{q\in Q}  \me^{\ii qx/W} \frac{\wh S_W(q) \chi(q)}{1-|m|^2 \wh S_W(q)}  
  + \frac{1}{N}\sum_{q\in Q}  \me^{\ii qx/W} \frac{\wh S_W(q) \ol\chi(q)}{1-|m|^2 \wh S_W(q)}  
\notag  \\ \label{FTT}
 &\;=\;\frac{1}{N}\sum_{q\in Q}  \me^{\ii qx/W} \frac{\wh S_W(q) \chi(q)}{1-|m|^2 \wh S_W(q)} +  
 O_K \pbb{\frac{1}{W} \avgbb{ \frac{x}{W}}^{-K}}\,.
\end{align}
Here we used that the function
$$
  R(q)= \frac{\wh S_W(q) \ol\chi(q)}{1-|m|^2 \wh S_W(q)}\,,
$$
extended to the whole real line, is smooth and its derivatives are bounded uniformly in $N$ and $W$ (by \eqref{lower bound for SW}). (These bounds may of course depend on $\epsilon$). Moreover, $R(q)=O_K(\langle q\rangle^{-K})$
for any $K$;
see \eqref{SWdecay}.
By summation by parts, as in \eqref{SW}, we find that for such a function we have
\be\label{Rdecay}
  \frac{1}{N} \sum_{q\in Q}  \me^{\ii q x/W} R(q) \;=\ O_K \pbb{ \frac{1}{W}
\avgbb{\frac{x}{W}}^{-K}}
\ee
for any $K$.

Now we consider the first term in \eqref{FTT}. For the following we use $A_i(q,\eta,N,W)$ with $i = 1,2,3,\dots$ to denote functions that are smooth in $q$ and whose $q$-derivatives are uniformly bounded in $q$, $\eta$, $W$, and $N$.
Using the Taylor expansion \eqref{SWF} and \eqref{m2}, we have (omitting the arguments for brevity)
$$
1-|m|^2 \wh S_W \;=\; \alpha \eta + D  q^2 + A_1 q^4 + A_2 \eta q^2 + A_3 \eta^2 \,.
$$
This gives (again omitting the arguments)
\begin{align}
\pbb{\frac{\wh S_W}{1 - \abs{m}^2 \wh S_W} - \frac{1}{\alpha \eta + D q^2}} \chi &\;=\; \frac{A_4 q^4 + A_5 \eta q^2 + A_6 \eta^2}{\pb{\alpha \eta + D  q^2 + A_1 q^4 + A_2 \eta q^2 + A_3 \eta^2} \pb{\alpha \eta + D q^2}}\chi
\notag \\ 
&\;=\; \frac{A_4 r^4 + A_5 r^2 + A_6}{\pb{\alpha + D r^2 + A_1 \eta r^4 + A_2 \eta r^2 + A_3 \eta} \pb{\alpha + D r^2}}\chi
\notag \\ \label{big frac}
&\;\eqd\; F_{N,W,\eta}(r)\,,
\end{align}
where we introduced the new variable $r \deq \eta^{-1/2} q$. By definition, $A_1, \dots, A_6$ and their $q$-derivatives are uniformly bounded. Since $D \geq c > 0$ and $r \leq \epsilon \eta^{-1/2}$ on the support of $\chi$, we find that for small enough $\epsilon$ the denominator of the second line of \eqref{big frac} is bounded away from zero, uniformly in $r$, $\eta$, $W$, and $N$. We therefore conclude that $F_{N,W,\eta}$ is smooth and its derivatives (in the variable $r$) are uniformly bounded.

Using summation by parts, exactly as in \eqref{SW}, we get
\begin{equation} \label{sum parts 2}
\frac{1}{N}\sum_{q\in Q}  \me^{\ii qx/W}  F_{N,W,\eta}(\eta^{-1/2} q) 
\;\leq \;
\frac{C_K}{W} \avgbb{ \frac{\sqrt{\eta}x}{W} }^{-K}\,.
\end{equation}
Here we used that the sum on the left-hand side ranges over a set of size $O(N/W)$ due to the factor $\chi$ in the definition of $F_{N,W,\eta}$.
Therefore \eqref{big frac} and \eqref{sum parts 2} imply that the first term of \eqref{FTT} is given by
\be\label{1st}
\frac{1}{N}\sum_{q\in Q}  \me^{\ii qx/W} \frac{\wh S_W(q) \chi(q)}{1-|m|^2 \wh S_W(q)}  
\;=\;  \frac{1}{N}\sum_{q\in Q}  \me^{\ii qx/W} \frac{\chi(q)}{\al\eta + D q^2} 
+ O_K \pbb{\frac{1}{W} \avgbb{ \frac{\sqrt{\eta}x}{W} }^{-K}}\,.
\ee
Notice that the error term in \eqref{Rdecay} is smaller than in
\eqref{1st}.
Next, we remove the factor $\chi$ from the main term, exactly as in \eqref{Rdecay}. Plugging this into \eqref{FTT} yields
\be\label{1st1}
 \pbb{\frac{S}{1-|m|^2 S}}_{x0}
  \;=\;  \frac{1}{N}\sum_{q\in Q}  \me^{\ii qx/W} \frac{1}{\al\eta + D q^2} 
+ O_K \pbb{\frac{1}{W} \avgbb{ \frac{\sqrt{\eta}x}{W} }^{-K}}\,.
\ee
We can extend the summation in the main term
$$
  \frac{1}{N}\sum_{q\in Q}  \me^{\ii qx/W} \frac{1}{\al\eta + D q^2}
  \;=\;  \frac{1}{N}\sum_{q\in \frac{2\pi W}{N}\bZ }  \me^{\ii qx/W} \frac{1}{\al\eta + D q^2}
  + O \pbb{\frac{1}{W}\int_\bR  \frac{\ind{\abs{q}\ge \pi W} }{\al\eta + D q^2}\, \rd q}\,,
$$
where the error term on the right-hand side is of order $O(W^{-2})$.
Thus we have
$$
\Big(\frac{S}{1-|m|^2 S}\Big)_{x0}
\;=\; \frac{1}{N}\sum_{q\in  \frac{2\pi W}{N}\bZ}  \me^{\ii qx/W} \frac{1}{\al\eta + D q^2}
 +  O_K \pbb{\frac{1}{W} \avgbb{ \frac{\sqrt{\eta}x}{W} }^{-K}}   + 
O \pbb{\frac{1}{W^2}}\,.
$$
The main term can be computed by the Poisson summation formula
\begin{equation*}
\frac{1}{\pi}\sum_{n\in \bZ} \frac{\me^{\ii nx}}{a^2+n^2} \;=\; \frac{1}{a}\sum_{k\in \bZ} \me^{-a|x+2\pi k|}\,,
\end{equation*}
where $a > 0$.
Thus,
$$
  \frac{1}{N}\sum_{q\in \frac{2\pi W}{N}\bZ}  \me^{\ii qx/W} \frac{1}{\al\eta + D q^2}
  \;=\; \frac{1}{2W\sqrt{D\al\eta}}\sum_{k\in \bZ} \exp \qbb{-\frac{\sqrt{\al\eta}}{W\sqrt{D}}|x+kN|}\,.
$$
This concludes the proof of \eqref{prof1}.

In order to prove \eqref{supexpl}, it suffices to analyse the asymptotics of the expression
\begin{equation*}
R \;\deq\; \frac{1}{W \sqrt{\eta}} \sum_{k \in \Z} \me^{-\sqrt{\eta}\frac{N}{W} k}\,.
\end{equation*}
We consider two cases. If $\eta \geq \pb{\frac{W}{N}}^2$ then $R \asymp \frac{1}{W \sqrt{\eta}}$. On the other hand, if $\eta \leq \pb{\frac{W}{N}}^2$ we use an integral approximation to get
\begin{equation*}
R \;=\; \frac{1}{N \eta} \, \sqrt{\eta}\frac{N}{W} \sum_k \me^{-\sqrt{\eta}\frac{N}{W} k} \;\asymp\; \frac{1}{N \eta}\,.
\end{equation*}
This concludes the proof of \eqref{supexpl}, and hence of Proposition \ref{prop:profile}.

\subsection{Higher dimensions: proofs of Lemmas \ref{lm:prof bound d} and \ref{lm: prof for d}}
\begin{proof}[Proof of Lemma \ref{lm:prof bound d}]
We follow the argument from the proof of Proposition \ref{prop:profile} in the previous section, and merely sketch the differences. We use the $d$-dimensional lattices
\begin{equation*}
\bb T \;\equiv\; \bb T_L^d \,, \qquad Q \;\deq\; \frac{2 \pi W}{L} \bb T\,.
\end{equation*}
Exactly as in \eqref{Ssum1app}, we get
\begin{equation} \label{prof for d}
\pbb{\frac{S}{1 - \abs{m}^2 S}}_{x0} \;=\; \frac{1}{N} \sum_{q \in Q} \me^{\ii q \sdot x / W} \frac{\wh S_W(q)}{1 - \abs{m}^2 \wh S_W(q)} \qquad \text{where} \qquad \wh S_W(q) \;\deq\; \sum_{x \in \bb T} \me^{-\ii q \sdot x / W} s_{x0}\,.
\end{equation}

Next, the basic properties of $\wh S_W$ listed in Lemma \ref{lm:SW}, and their proofs, carry over verbatim to the higher-dimensional setting. Now \eqref{SWF} reads $\wh S_W(q) = 1 - (q \sdot Dq) (1 + A_2(q))$ where $A_2(q) = O(\abs{q}^2)$ uniformly in $W$. Let $\chi$ be a smooth bump function on $\R^d$, as in the proof of Proposition \ref{prop:profile}. As in \eqref{FTT}, we find
\begin{equation*}
\pbb{\frac{S}{1-|m|^2 S}}_{x0} \;=\; \frac{1}{N}\sum_{q}  \me^{\ii q\sdot x/W} \frac{\wh S_W(q) \chi(q)}{1-|m|^2 \wh S_W(q)} +  
 O_K \pbb{\frac{1}{W^d} \avgbb{ \frac{x}{W}}^{-K}}
\end{equation*}
for arbitrary $K \in \N$. Here, and for the rest of this proof, the summation in $q$ ranges over the lattice $\big(\frac{2 \pi W}{N} \Z\big)^d$. We split
\begin{equation} \label{splitting for d}
\pbb{\frac{S}{1 - \abs{m}^2 S}}_{x0} \;=\; \frac{1}{N} \sum_{q} \me^{\ii q \sdot x / W} \frac{\chi(q)}{\alpha\eta + q \sdot D q}
+ \frac{1}{N} \sum_{q} \me^{\ii q \sdot x / W} R(q) \chi(q) + O_K \pbb{\frac{1}{W^d} \avgbb{ \frac{x}{W}}^{-K}}\,,
\end{equation}
where
\begin{equation*}
R(q) \;\deq\; \frac{\wh S_W(q)}{1 - \abs{m}^2 \wh S_W(q)} - \frac{1}{\alpha\eta + q \sdot D q}\,.
\end{equation*}
Note that, unlike in the proof of Proposition \ref{prop:profile}, we keep the cutoff function $\chi$ in the main term since the function $(\eta + q\sdot Dq)^{-1}$ is not integrable in higher dimensions.

The main term of \eqref{splitting for d} can be computed using Poisson summation:
\begin{equation} \label{main term Poi d}
\frac{1}{N} \sum_{q \in ( \frac{2 \pi W}{N} \Z)^d} \me^{\ii q \sdot x / W} \frac{\chi(q)}{\alpha \eta + q D q} \;=\; \frac{(\alpha \eta)^{d/2 - 1}}{W^d \sqrt{\det D}} \sum_{k \in \Z^d}  \pb{V * \varphi_{\sqrt{\alpha \eta}}}\pbb{\frac{\sqrt{\alpha \eta}}{W}D^{-1/2} (x - y + kL)}\,.
\end{equation}
Using that $V(x) \asymp \abs{x}^{2 - d}$ near the origin, we find $\norm{V * \varphi_{\sqrt{\alpha \eta}}}_\infty \leq C W^{-d}$. By treating the two cases $\eta \leq \pb{\frac{W}{N}}^2$ and $\eta \geq \pb{\frac{W}{N}}^2$ separately, we find exactly as in the last paragraph of the proof of Proposition \ref{prop:profile} that \eqref{main term Poi d} is bounded by $C W^{-d} + C (N \eta)^{-1}$.

What remains therefore is the estimate of the error term containing $R$ in \eqref{splitting for d}. To that end, we write
\begin{equation} \label{expansion of R}
R(q) \;=\; \frac{B_4 + \eta B_2 + \eta^2 B_0}{\pb{\alpha\eta + (q\sdot Dq) (1 + A_2) + \eta A_2' + \eta^2 A_0} (\alpha\eta + q\sdot Dq)}\,,
\end{equation}
where $B_0, B_2, B_4, A_0, A_2, A_2'$ are smooth and bounded functions of $q$, each of order $O(\abs{q}^i)$ near the origin
uniformly in $W$ and $\eta$, where $i$ denotes the subscript of the corresponding function. Using the change of variables $q = \sqrt{\eta} \, r$  it is now easy to see that the error term containing $R$ in \eqref{splitting for d} is bounded by $C W^{-d}$. This concludes the proof of Lemma \ref{lm:prof bound d}.
\end{proof}

\begin{proof}[Proof of Lemma \ref{lm: prof for d}]
We need a more precise bound on the error term of \eqref{splitting for d} than the bound $C W^{-d}$ from the proof of Lemma \ref{lm:prof bound d}. In fact, we claim that
\begin{equation} \label{nasty error}
\frac{1}{N} \sum_{q} \me^{\ii q x / W} R(q) \chi(q) \;=\; O_K \pBB{\frac{1}{W^d} \avgbb{\frac{x}{W}}^{-K} + \frac{\eta^{d/2}}{W^d} \avgbb{\frac{\sqrt{\eta} x}{W}}^{-K}}\,.
\end{equation}
The proof of \eqref{nasty error} is a rather laborious exercise in Taylor expansion whose details we omit. The basic strategy is similar to the analysis of \eqref{big frac}, except that we expand $\wh S_W$ up to order $d/2 + 2$ (instead of $4$). This completes the proof of Lemma \ref{lm: prof for d}.
\end{proof}

\subsection{Slowly decaying band: proof of Lemma \ref{lm:ht profile bound} and Proposition \ref{lm:profile heavy tail}}
We begin by proving the following auxiliary result, which gives the relevant asymptotics of $\wh S_W$. For $q \neq 0$ define
\begin{equation} \label{Theta theta}
b(q) \;\deq\; h_0\frac{W}{Z} \int_\bR \dd u \, \frac{1 - \cos u}{\abs{u}^{1 + \beta}} \sigma\pbb{\frac{Wu}{qN}} \;=\; B + O \qbb{\pbb{\frac{W}{qN}}^\beta}\,.
\end{equation}
We also set $b(0) \deq 0$, so that $b$ is continuous.

\begin{lemma} \label{lm:SW heavy tail}
Suppose that $d = 1$ and that \eqref{heavy tail band} and \eqref{assumption on h} hold. Then the following are true.
\begin{enumerate}
\item
For any $K \in \N$ there exists a constant $C_{K}$ such that for $\abs{q} \geq 1$ we have
\begin{equation*}
\abs{ \wh S_W(q)} \;\leq\; \frac{C_{K}}{\abs{q}^K}\,.
\end{equation*}
\item
For $\abs{q} \leq 1$ we have
\begin{equation} \label{asht1}
\wh S_W(q) \;=\; 1 - b(q) \abs{q}^\beta + O(q^2)
\end{equation}
uniformly in $N$ and $W$.
\item
There is a constant $c_1$ such that
\begin{equation*}
\norm{S \ol \Pi} \;\leq\; 1 - c_1 \pbb{\frac{W}{N}}^\beta\,.
\end{equation*}
\end{enumerate}
\end{lemma}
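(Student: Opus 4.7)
The plan is to analyze $\wh S_W(q)$ directly from its series definition, handling the three parts in sequence. The proof will closely mirror the structure of Lemma~\ref{lm:SW} for the rapidly decaying case, but now the fractional exponent $\beta$ must be extracted by a careful rescaling rather than a Taylor expansion.

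For part~(i), I would use iterated summation by parts exactly as in \eqref{SW}. The essential input is that $f$ is smooth on $\R$: the singularity of $|y|^{-1-\beta}$ at the origin is cancelled by $h(y) = O(|y|^{1+\beta})$, and $f^{(k)}(y)$ decays like $\langle y \rangle^{-1-\beta-k}$; each summation by parts produces a factor of $|q|^{-1}$, and the boundary error $O(N^{-1})$ is absorbed since $|q| \geq 1$. For part~(iii), once (i) and (ii) are in place, one argues as in Lemma~\ref{lm:invert} that $\|S\ol\Pi\| = \max_{q \in Q \setminus \{0\}} |\wh S_W(q)|$ with $Q = WP$. For $|q| \leq \epsilon_0$ (which covers the smallest nonzero $|q| \asymp W/N$), part~(ii) combined with the uniform positive lower bound $b(q) \geq c > 0$ from \eqref{Theta theta} gives $\wh S_W(q) \leq 1 - c(W/N)^\beta$; for $\epsilon_0 \leq |q| \leq 1$, positivity of the summand and the elementary observation that $1-\cos(qx/W)$ is bounded below on a macroscopic range of $x$ directly bound $\wh S_W(q)$ away from $1$; for $|q| \geq 1$, part~(i) applies. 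The lower bound $\wh S_W \geq -1+\delta$ from \eqref{Slow} rules out $\wh S_W$ approaching $-1$.

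The heart of the argument is part~(ii). Starting from the symmetric identity
\begin{equation*}
1 - \wh S_W(q) \;=\; \frac{1}{Z}\sum_x \pb{1-\cos(qx/W)} f(x/W) \sigma(x/N)\,,
\end{equation*}
I would first replace the sum by the corresponding integral via a Poisson summation argument. Next, the substitution $y = x/W$ followed by $u = qy$ extracts the critical $|q|^\beta$:
\begin{equation*}
1 - \wh S_W(q) \;\approx\; \frac{W|q|^\beta}{Z} \int_\R \pb{1-\cos u} \frac{h(u/q)}{|u|^{1+\beta}} \sigma\pbb{\frac{Wu}{qN}} \dd u\,.
\end{equation*}
Finally I would replace $h(u/q)$ by $h_0$: the error splits into $|u| \leq |q|$ (where $h$ is bounded and $\int_{|u|\leq |q|} (1-\cos u)|u|^{-1-\beta}\dd u = O(|q|^{2-\beta})$) and $|u| \geq |q|$ (where $|h(u/q) - h_0| \leq C|q/u|^3$ by \eqref{assumption on h}, contributing an integral of order $|q|^{3-\beta}$); both multiply by the $|q|^\beta$ prefactor to give $O(q^2)$, establishing \eqref{asht1}.

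The main obstacle is the sum-to-integral step with $O(q^2)$ accuracy uniformly in $W$. A naive Euler--Maclaurin bound could produce errors proportional to $|q|^{2-\beta}$ -- or worse -- which would swamp the $|q|^\beta$ main term. Overcoming this requires exploiting both the smoothness of $f$ near the origin (where the integrand is $O(q^2 x^2/W^2)$ rather than $O(|x|^{-1-\beta})$) and the decay of $f$ at infinity, and pairing each scale with the appropriate Fourier-type estimate so that the sum-integral discrepancy is no larger than the replacement error $h(u/q)\to h_0$. The replacement itself must be organized carefully since $h_0/|u/q|^{1+\beta}$ is not integrable near the origin, and the finite-part cancellations are implicit in the structure of $(1-\cos u)$.
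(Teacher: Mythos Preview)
Your outline is correct, and parts (i) and (iii) essentially match the paper's treatment. For part (ii), however, you and the paper perform the same two operations---replacing $h$ by $h_0$ and passing from a sum to an integral---in \emph{opposite orders}, and the paper's order is significantly cleaner.

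The paper splits $h = \chi h + \ol\chi(h-h_0) + h_0\ol\chi$ \emph{directly in the sum} \eqref{1 - S}, before any integral appears. The first piece is compactly supported and equals $\chi f$, which is smooth and bounded; the second decays like $|x|^{-4-\beta}$ by \eqref{assumption on h}. For both, the trivial bound $1-\cos(qx) \leq q^2 x^2/2$ applied termwise already yields $O(q^2)$, with no sum-to-integral step needed at all. Only the third piece, whose summand $h_0\,\ol\chi(x)\,|x|^{-1-\beta}\,\sigma(Wx/N)$ is smooth and supported away from the origin, requires passing to an integral, and there a mid-point Riemann sum suffices with error $O(q^2/W^2 + N^{-2})$. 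The change of variables $u=qx$ then produces $b(q)\,|q|^\beta$ up to an $O(q^2)$ correction coming from the region $|u| \leq 2|q|$ where $\ol\chi(u/q) \neq 1$.

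Your order---sum to integral first, then replace $h$---forces you to control the sum-to-integral discrepancy for the full integrand, which (as you correctly flag) is the delicate point: the function $(1-\cos(qx))f(x)$ has second $x$-derivatives of size $O(q^2)$ near the origin but only power decay $|x|^{-3-\beta}$ in the tail, and a uniform Euler--Maclaurin or Poisson bound must handle both regimes simultaneously. This can certainly be made to work, but the paper's device of isolating the $h_0\ol\chi$ piece first removes the difficulty entirely and reduces the Riemann-sum error to a routine second-derivative estimate on a function with no singularity.

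A minor bookkeeping remark: in your $|u|\geq|q|$ estimate, the integral is actually $O(|q|^{2-\beta})$ rather than $O(|q|^{3-\beta})$, since the contribution from $|q| \leq |u| \leq 1$ dominates. The final $O(q^2)$ bound after multiplying by $|q|^\beta$ is unaffected.
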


\begin{proof}
Part (i) is proved similarly to \eqref{SWdecay}, using summation by parts.

In order to prove part (ii), we write
\begin{equation} \label{1 - S}
1 - \wh S_W(q) \;=\; \frac{1}{Z} \sum_{x \in \frac{1}{W} \Z} \pb{1 - \cos(qx)} \frac{h(x)}{\abs{x}^{1 + \beta}} \sigma \pbb{\frac{Wx}{N}}\,.
\end{equation}
Let $\chi$ be a smooth, symmetric bump function satisfying $\chi(x) = 1$ for $\abs{x} \leq 1$ and $\chi(x) = 0$ for $\abs{x} \geq 2$. Write $\ol \chi \deq 1 - \chi$. We introduce the splitting
\begin{equation*}
h \;=\; \chi h + \ol \chi (h - h_0) +  h_0\ol \chi
\end{equation*}
on the right-hand side of \eqref{1 - S}. It is easy to check that the two first terms give a contribution of order $O(q^2)$. The last term of the splitting gives rise to
\begin{equation}
\frac{h_0}{Z} \sum_{x \in \frac{1}{W}\Z} \pb{1 - \cos(qx)} \frac{\ol \chi(x)}{\abs{x}^{1 + \beta}} \sigma \pbb{\frac{Wx}{N}}
\;=\; h_0 \frac{W}{Z} \int_\R \pb{1 - \cos(qx)} \frac{\ol \chi(x)}{\abs{x}^{1 + \beta}} \sigma \pbb{\frac{Wx}{N}} \, \dd x + O \pbb{\frac{q^2}{W^2} + \frac{1}{N^2}}\,,
\end{equation}
where the last step follows from a mid-point Riemann sum approximation.
Now a change of variables $u = qx$ easily  yields \eqref{asht1}.

Part (iii) follows from part (ii) using an argument similar to \eqref{norm S Pi}.
\end{proof}

\begin{proof}[Proof of Lemma \ref{lm:ht profile bound}]
The claim follows from the bound
\begin{equation*}
\Tdet_{xy} \;\leq\; \frac{C}{W} \frac{W}{N} \sum_{q \in Q} \frac{\wh S_W(q)}{1 - \abs{m}^2 \wh S_W(q)} \;\leq\; \frac{C}{N \eta} + \frac{C}{W}\int_\R \dd q\, \frac{\wh S_W(q) \ind{\abs{q} \geq \eta^{1/\beta}}}{1 - \abs{m}^2 \wh S_W(q)} \;\leq\; \frac{C}{N \eta} + \frac{C (\eta^{1/\beta - 1} + 1)}{W}\,,
\end{equation*}
where the first term is the contribution of the low modes $\abs{q} \leq \eta^{1/\beta}$ and the second term the contribution of the high modes $\abs{q} \geq \eta^{1/\beta}$, which may be replaced with an integral and estimated using Lemma \ref{lm:SW heavy tail}. We omit the details.
\end{proof}

\begin{proof}[Proof of Proposition \ref{lm:profile heavy tail}]
We proceed similarly to the proof of Proposition \ref{prop:profile}. We choose a cutoff scale $\epsilon$, and denote by $\chi$ the bump function from the proof of Lemma \ref{lm:SW heavy tail}. The scale $\epsilon$ satisfies $\eta^{1/\beta} \ll \epsilon \ll 1$, and will be chosen by optimizing at the end of the proof.

We use the expansions \eqref{asht1} and \eqref{m2}. Thus we find, as in the proof of Proposition \ref{prop:profile},
\begin{equation} \label{main calc for ht}
\pbb{\frac{S}{1 - \abs{m}^2 S}}_{x0} \;=\; \frac{1}{N} \sum_{q \in Q} \me^{\ii q x / W} \frac{\chi(q / \epsilon)}{\alpha \eta + B \abs{q}^\beta}
+ \frac{1}{N} \sum_{q \in Q} \me^{\ii q x / W} R(q) \chi(q/\epsilon) + O\pbb{\frac{\epsilon^{1 - \beta}}{W}}\,,
\end{equation}
where $\chi$ is a smooth bump function as in the proof of Proposition \ref{prop:profile} and
\begin{equation*}
R(q) \;\deq\; \frac{\wh S_W(q)}{1 - \abs{m}^2 \wh S_W(q)} -\frac{1}{\alpha \eta + B \abs{q}^\beta}
\;=\; \frac{(B - b) \abs{q}^\beta + O\pb{\eta^2 + \eta \abs{q}^\beta + q^2}}{\pb{\alpha \eta + b \abs{q}^\beta + O(\eta^2 + \eta \abs{q}^\beta + q^2)}\pb{\alpha \eta + B \abs{q}^\beta}}\,.
\end{equation*}
Note that for $q \in Q \setminus \{0\}$ we have $b(q) \geq c$. Using \eqref{Theta theta} we may therefore estimate, as in \eqref{big frac}, to get
\begin{equation*}
\frac{1}{N} \sum_{q \in Q} \abs{R(q)} \chi(q /\epsilon) \;\leq\; \frac{C}{W} \pbb{ \log N \, \eta^{1 / \beta - 2}  \pbb{\frac{W}{N}}^\beta + 1 + \epsilon^{2 - \beta} \eta^{1/\beta - 1}}\,.
\end{equation*}
Next, the bump function in the main term of \eqref{main calc for ht} may be easily removed, and the summation in $q$ extended to the whole lattice $\frac{2 \pi W}{N} \Z$, at the expense of an error of order $O(\epsilon^{1-\beta}/W)$.  Putting everything together, we get
\begin{equation*}
\pbb{\frac{S}{1 - \abs{m}^2 S}}_{x0} \;=\; \frac{1}{N} \sum_{q \in \frac{2 \pi W}{N} \Z } \me^{\ii q x / W} \frac{1}{\alpha \eta + B \abs{q}^\beta} + \frac{\eta^{1/\beta - 1}}{W} O \pb{W^{-c} + \epsilon^{2 - \beta} + \epsilon^{1 - \beta} \eta^{1 - 1 / \beta}}
\end{equation*}
for some $c > 0$, where we used \eqref{prof assump ht}. Setting $\epsilon \deq \eta^{1 - 1/\beta}$ and Poisson summation yields
\begin{equation*}
\Tdet_{x0} \;=\; \frac{|m|^2}{W \alpha \eta} \pbb{\frac{\alpha \eta}{B}}^{1/\beta} \,  \sum_{k\in \bZ}
 V \qbb{\pbb{\frac{\alpha \eta}{B}}^{1/\beta} \, \frac{x+kN}{W}} + O \pbb{\frac{\eta^{1/\beta - 1}}{W^{1 + c}}}\,.
\end{equation*}
Now \eqref{Tdet ht} follows by noting that by \eqref{asymp for V}, under the assumption \eqref{prof assump ht}, only the term $k = 0$ is of leading order.
\end{proof}

\section{Multilinear large deviation estimates}

In this appendix we give a generalization 
of the large deviation estimate of Corollary B.3 \cite{EYY1}.
The proof is simpler and the statement is formulated
under the assumption \eqref{finite moments} instead of the stronger 
subexponential decay assumption. Moreover, since the current proof does not rely on the Burkholder inequality, it is trivially generalizable to arbitrary multilinear estimates.

Throughout the following we consider random variables $X$ satisfying
\begin{equation} \label{cond on X}
\E X \;=\; 0\,, \qquad \E \abs{X}^2 \;=\; 1\,, \qquad \norm{X}_p \;\leq\; \mu_p
\end{equation}
for all $p$ with some $\mu_p$. Here we set $\norm{X}_p \deq \pb{\E \abs{X}^p}^{1/p}$.

\begin{theorem}[Large deviation bounds] \label{thm: LDE}
Let $\pb{X_i^{(N)}}$, $\pb{Y_i^{(N)}}$, $\pb{a_{ij}^{(N)}}$, and $\pb{b_i^{(N)}}$ be independent families of random variables, where $N \in \N$ and $i,j = 1, \dots, N$. Suppose that all entries $X_i^{(N)}$ and $Y_i^{(N)}$ are independent and satisfy \eqref{cond on X}.
\begin{enumerate}
\item
Suppose that $\pb{\sum_i \abs{b_i}^2}^{1/2} \prec \Psi$. Then $\sum_i b_i X_i \prec \Psi$.
\item
Suppose that $\pb{\sum_{i \neq j} \abs{a_{ij}}^2}^{1/2} \prec \Psi$. Then $\sum_{i \neq j} a_{ij} X_i X_j \prec \Psi$.
\item
Suppose that $\pb{\sum_{i,j} \abs{a_{ij}}^2}^{1/2} \prec \Psi$. Then $\sum_{i,j} a_{ij} X_i Y_j \prec \Psi$.
\end{enumerate}
If all of the above random variables depend on an index $u$ and the hypotheses of (i) -- (iii) are uniform in $u$, then so are the conclusions.
\end{theorem}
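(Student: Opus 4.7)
The plan is to prove each part via the high-moment method. For a fixed even integer $2p$, I will establish a bound of the form
\begin{equation*}
\E\qB{|Z|^{2p} \,\big|\, \cal F} \;\leq\; C_p \, \Sigma^p\,,
\end{equation*}
where $Z$ denotes the sum in question, $\cal F$ is the $\sigma$-algebra generated by the (possibly random) coefficients, and $\Sigma$ is the sum of squared moduli of those coefficients ($\sum_i |b_i|^2$ in (i), $\sum_{i,j}|a_{ij}|^2$ in (ii) and (iii)). Since the coefficients are independent of $(X_i)$ and $(Y_j)$, this reduces to proving the moment bound for each fixed realization of the coefficients. To convert it into a stochastic-domination statement, I fix $\e > 0$ and $D > 0$, condition on $\cal F$, apply Markov's inequality, and split on the event $\{\Sigma^{1/2} \leq N^{\e/2}\Psi\}$ (whose complement has probability $\leq N^{-D-1}$ by hypothesis). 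For $p$ sufficiently large depending on $\e$ and $D$, the resulting probability is bounded by $N^{-D}$, which is exactly the estimate needed for $Z \prec \Psi$; uniformity in any external parameter $u$ follows since all constants depend only on $p$ and on the sequence $\mu_p$.

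For part (i), I expand
\begin{equation*}
\E \absbb{\sum_i b_i X_i}^{2p} \;=\; \sum_{i_1,\dots,i_{2p}} b_{i_1} \cdots b_{i_p}\bar b_{i_{p+1}}\cdots\bar b_{i_{2p}} \, \E \pb{X_{i_1} \cdots X_{i_p}\bar X_{i_{p+1}}\cdots\bar X_{i_{2p}}}\,.
\end{equation*}
Independence and centering force each distinct index in a contributing term to appear at least twice. Organizing the sum by partitions $\pi$ of $\{1,\dots,2p\}$ into blocks of sizes $n_1,\ldots,n_r \geq 2$ (so $r \leq p$), and using the moment bound $\E |X_i|^{n_k} \leq \mu_{n_k}$ together with the $\ell^p$-inclusion $\sum_i |b_i|^{n_k} \leq \pb{\sum_i |b_i|^2}^{n_k/2}$ valid for $n_k \geq 2$, each partition contributes at most $\mu_{2p}^p \pb{\sum_i |b_i|^2}^p$. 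Since the number of such partitions depends only on $p$, the desired bound follows. Case (iii) is essentially the same argument: independence of the families $X$ and $Y$ forces the row and column indices separately to appear with multiplicity $\geq 2$, and the $\ell^p$-inclusion applied in both indices yields $C_p \pb{\sum_{i,j} |a_{ij}|^2}^p$.

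Case (ii) is the main obstacle, because the two factors $X_i$ and $X_j$ are drawn from the same family, so the combinatorics now couple the two sums. Expanding $\E \absb{\sum_{i \neq j} a_{ij} X_i X_j}^{2p}$ produces a sum over $2p$-tuples of ordered index pairs $(i_k, j_k)$ with $i_k \neq j_k$; I will encode each such tuple as a multigraph on the set of distinct indices, whose $2p$ edges carry the labels $a_{i_k j_k}$ or $\bar a_{i_k j_k}$. The centering condition forces every vertex to have degree $\geq 2$, which together with the degree sum $4p$ bounds the number of vertices by $2p$. For each fixed graph topology, I bound the sum over labellings by $\pb{\sum_{i,j}|a_{ij}|^2}^p$ via iterated Cauchy--Schwarz at each vertex, using $|a_{ij}||a_{kl}| \leq \frac{1}{2}(|a_{ij}|^2 + |a_{kl}|^2)$ to pair off edges. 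The exclusion of the diagonal $i \neq j$ is crucial here, as it rules out self-loops that would otherwise produce uncontrolled contributions from $\E|X_i|^{2p}$. This combinatorial bookkeeping is the main technical step; once it is in place, the conversion outlined above completes the proof, uniformly in all external parameters.
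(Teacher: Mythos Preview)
Your approach is correct but differs substantially from the paper's. You attack all three parts by direct moment expansion and combinatorial bookkeeping (partitions for (i), bipartite graphs for (iii), general multigraphs for (ii)), then convert via Chebyshev. The paper instead builds up in three short lemmas: first the Marcinkiewicz--Zygmund inequality gives (i) with the sharp constant $(Cp)^{1/2}\mu_p$; then (iii) follows by writing $\sum_{i,j} a_{ij} X_i Y_j = \sum_j b_j Y_j$ with $b_j = \sum_i a_{ij} X_i$, conditioning on $(X_i)$, and applying MZ twice. The key point of divergence is (ii): rather than your multigraph argument, the paper uses the decoupling identity
\[
1 \;=\; \frac{1}{2^{N-2}} \sum_{I \sqcup J = \{1,\dots,N\}} \ind{i \in I}\,\ind{j \in J} \qquad (i \neq j)
\]
to rewrite $\sum_{i \neq j} a_{ij} X_i X_j$ as an average over bipartitions of sums $\sum_{i \in I}\sum_{j \in J} a_{ij} X_i X_j$, to each of which Lemma~\ref{lm:aXY} applies directly since $(X_i)_{i \in I}$ and $(X_j)_{j \in J}$ are independent. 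This completely sidesteps the graph combinatorics. Your route works and is the more classical one, but the ``iterated Cauchy--Schwarz at each vertex'' step you allude to for (ii) is where all the effort lies and you have only sketched it; the paper's decoupling trick replaces that entire argument with two lines and, as they remark, extends immediately to $k$-linear forms via $k$-fold partitions. One minor point: your comment that the diagonal exclusion ``rules out uncontrolled contributions from $\E|X_i|^{2p}$'' is slightly off --- those moments are bounded by $\mu_{2p}$; the real issue is that a self-loop contributes $\E X_i^2 = 1$ and hence a nonzero mean, which would break the centering argument.
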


The rest of this appendix is devoted to the proof of Theorem \ref{thm: LDE}. Our proof in fact generalizes trivially to arbitrary multilinear estimates for quantities of the form $\sum_{i_1, \dots, i_k}^* a_{i_1 \dots i_k}(u) X_{i_1}(u) \cdots X_{i_k}(u)$, where the star indicates that the summation indices are constrained to be distinct.

We first recall the following  version of the Marcinkiewicz-Zygmund inequality.
\begin{lemma}\label{lm:MZ}
Let $X_1, \dots, X_N$ be a family of independent random variables each satisfying \eqref{cond on X} and suppose that the family $(b_i)$ is deterministic. Then
\be\label{MZ}
\normbb{\sum_i b_i X_i}_p \;\leq\; (Cp)^{1/2} \mu_p \pbb{\sum_i \abs{b_i}^2}^{1/2}
\ee
\end{lemma}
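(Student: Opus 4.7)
The plan is a direct moment computation via symmetrization and Khintchine's inequality for Rademacher sums. I first reduce to the case of symmetric $X_i$: letting $X_i'$ be iid copies of $X_i$ and writing $S \deq \sum_i b_i X_i$ and $S' \deq \sum_i b_i X_i'$, Jensen's inequality in the conditional expectation over the primed variables gives
\begin{equation*}
\E|S|^p \;=\; \E\absb{\E'(S - S')}^p \;\leq\; \E|S - S'|^p.
\end{equation*}
The symmetrized entries $\wt X_i \deq X_i - X_i'$ satisfy $\wt X_i \stackrel{d}{=} -\wt X_i$ and $\norm{\wt X_i}_p \leq 2\mu_p$, so at the cost of an absolute constant absorbed into $C$, I may assume the $X_i$ are themselves symmetric under negation.

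Exploiting this symmetry, $(X_i)_i \stackrel{d}{=} (\epsilon_i X_i)_i$ with $(\epsilon_i)$ iid Rademacher signs independent of $(X_i)$, so conditional on $(X_i)$ the sum $S$ is a Rademacher sum with weights $b_i X_i$. The classical Khintchine inequality (with its standard constant $C\sqrt{p}$, valid for $p \geq 2$) therefore yields, after taking $L^p$-norms in $(X_i)$,
\begin{equation*}
\norm{S}_p \;\leq\; C\sqrt{p}\,\normbb{\pbb{\sum_i \abs{b_i}^2 \abs{X_i}^2}^{1/2}}_p \;=\; C\sqrt{p}\,\normbb{\sum_i \abs{b_i}^2 \abs{X_i}^2}_{p/2}^{1/2}.
\end{equation*}
The triangle inequality in $L^{p/2}$ then bounds the inner norm:
\begin{equation*}
\normbb{\sum_i \abs{b_i}^2 \abs{X_i}^2}_{p/2} \;\leq\; \sum_i \abs{b_i}^2\,\normb{\abs{X_i}^2}_{p/2} \;=\; \sum_i \abs{b_i}^2\,\norm{X_i}_p^2 \;\leq\; \mu_p^2\,\sum_i \abs{b_i}^2,
\end{equation*}
which combined with the previous display yields the claim for $p \ge 2$. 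The case $p \in [1,2)$ is immediate from $\norm{S}_p \le \norm{S}_2 = (\sum_i \abs{b_i}^2)^{1/2}$ and $\mu_p \ge 1$.

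The only substantive input is Khintchine's inequality for Rademacher sums, which admits a short moment proof: for $p = 2k$ the multinomial expansion of $\E\pb{\sum_i \epsilon_i a_i}^{2k}$ kills all terms with some odd exponent, and the remaining terms are majorized using the combinatorial identity $\binom{2k}{2j_1,\dots,2j_n} \leq (2k-1)!!\,\binom{k}{j_1,\dots,j_n}$ (an immediate consequence of $(2j)! = (2j-1)!!\,2^j\,j!$ and $(2j-1)!! \ge 1$), which together with the multinomial theorem in reverse yields $\E\pb{\sum_i \epsilon_i a_i}^{2k} \le (2k-1)!!\,\pb{\sum_i \abs{a_i}^2}^k$. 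Stirling then gives the standard constant $((2k-1)!!)^{1/(2k)} \le C\sqrt{k}$. Everything else is routine bookkeeping, and I do not anticipate any obstacle beyond careful accounting of constants.
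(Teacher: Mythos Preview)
Your proof is correct and follows essentially the same route as the paper's: both arrive at the square-function bound $\norm{S}_p \le (Cp)^{1/2} \norm{(\sum_i |b_i|^2 |X_i|^2)^{1/2}}_p$ and then control the right-hand side by $\mu_p (\sum_i |b_i|^2)^{1/2}$. The only difference is that the paper cites the classical Marcinkiewicz--Zygmund inequality for the first step and uses Jensen's inequality with weights $|b_i|^2/B^2$ for the second, whereas you unpack Marcinkiewicz--Zygmund via symmetrization and Khintchine and use the triangle inequality in $L^{p/2}$ (which is equivalent to the paper's Jensen step); your version is thus slightly more self-contained.
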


\begin{proof}
The proof is a simple application of Jensen's inequality. Writing $B^2 \deq \sum_{j} \abs{b_i}^2$,
we get, by the classical Marcinkiewicz-Zygmund inequality \cite{stroock} in the first line, that
\begin{align*}
\normbb{\sum_i b_i X_i}_p^p &\;\leq\; (C p)^{p/2} \, \normbb{\pbb{\sum_i \abs{b_i}^2 \abs{X_i}^2}^{1/2}}_p^{p}
\\
&\;=\; (Cp)^{p/2} B^p \, \E \qbb{\pbb{\sum_i \frac{\abs{b_i}^2}{B^2} \abs{X_i}^2}^{p/2}}
\\
&\;\leq\; (C p)^{p/2} B^p \, \E \qbb{\sum_i \frac{\abs{b_i}^2}{B^2} \abs{X_i}^p}
\\
&\;\leq\; (Cp)^{p/2} B^p \mu_p^p\,.
\end{align*}
\end{proof}

Next, we prove the following intermediate result.

\begin{lemma}\label{lm:aXY}
Let $X_1, \dots, X_N, Y_1, \dots, Y_N$ be independent random variables each satisfying \eqref{cond on X}, and suppose that the family $(a_{ij})$ is deterministic. Then for all $p \geq 2$ we have
\begin{equation*}
\normbb{\sum_{i,j} a_{ij} X_i Y_j}_p \;\leq\; C p \, \mu_p^2 \pbb{\sum_{i,j} \abs{a_{ij}}^2}^{1/2}\,.
\end{equation*}
\end{lemma}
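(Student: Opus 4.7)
\medskip

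\noindent\textbf{Proof plan for Lemma~\ref{lm:aXY}.}

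The plan is to reduce the bilinear estimate to two applications of the linear Marcinkiewicz--Zygmund bound in Lemma~\ref{lm:MZ}, using Minkowski's inequality as the bridge. The key observation is that if we condition first on the $Y_j$'s, then $\sum_{i,j} a_{ij} X_i Y_j = \sum_i b_i X_i$ is a linear combination of the independent variables $X_i$ with (random but $X$-independent) coefficients $b_i \deq \sum_j a_{ij} Y_j$.

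First I would apply Lemma~\ref{lm:MZ} conditionally on the $\sigma$-algebra generated by $(Y_j)$, obtaining
\begin{equation*}
\E_X \absbb{\sum_{i,j} a_{ij} X_i Y_j}^p \;\leq\; (Cp)^{p/2} \mu_p^p \pbb{\sum_i \absb{\textstyle\sum_j a_{ij} Y_j}^2}^{p/2}.
\end{equation*}
Taking expectation in $Y$ and then the $p$-th root gives
\begin{equation*}
\normbb{\sum_{i,j} a_{ij} X_i Y_j}_p \;\leq\; (Cp)^{1/2} \mu_p \, \normbb{\pbb{\sum_i \absb{\textstyle\sum_j a_{ij} Y_j}^2}^{1/2}}_p.
\end{equation*}

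Next, since $p \geq 2$, the norm $\norm{\,\cdot\,}_{p/2}$ is a genuine norm. Writing the square root as an $L^p$ norm of a nonnegative quantity and applying Minkowski's inequality in $L^{p/2}$, I would estimate
\begin{equation*}
\normbb{\pbb{\sum_i \absb{\textstyle\sum_j a_{ij} Y_j}^2}^{1/2}}_p^2 \;=\; \normbb{\sum_i \absb{\textstyle\sum_j a_{ij} Y_j}^2}_{p/2} \;\leq\; \sum_i \normbb{\textstyle\sum_j a_{ij} Y_j}_p^2.
\end{equation*}
A second application of Lemma~\ref{lm:MZ}, now to each sum $\sum_j a_{ij} Y_j$, yields $\normb{\sum_j a_{ij} Y_j}_p \leq (Cp)^{1/2} \mu_p (\sum_j \abs{a_{ij}}^2)^{1/2}$, so the right-hand side is bounded by $C p \mu_p^2 \sum_{i,j} \abs{a_{ij}}^2$.

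Combining the two estimates gives $\normb{\sum_{i,j} a_{ij} X_i Y_j}_p \leq C p \mu_p^2 (\sum_{i,j} \abs{a_{ij}}^2)^{1/2}$ as claimed. There is no serious obstacle here: the only mildly subtle point is the use of Minkowski's inequality for $L^{p/2}$ (which requires $p \geq 2$) to pull the sum outside the norm of a sum of nonnegative terms, which is precisely what produces the bilinear $p$ instead of the linear $\sqrt{p}$ in the final bound.
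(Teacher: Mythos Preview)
Your proof is correct and is essentially the same as the paper's: both apply Lemma~\ref{lm:MZ} once conditionally, then use Minkowski's inequality in $L^{p/2}$ (the ``triangle inequality'' step in the paper) to pull the $\ell^2$-sum outside, and finally apply Lemma~\ref{lm:MZ} again to each inner sum. The only cosmetic difference is that you condition on $(Y_j)$ first and group as $\sum_i b_i X_i$, whereas the paper conditions on $(X_i)$ first and groups as $\sum_j b_j Y_j$; by the symmetry of the setup this is immaterial.
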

\begin{proof}
Write
\begin{equation*}
\sum_{i,j} a_{ij} X_i Y_j \;=\; \sum_j b_j Y_j \,, \qquad b_j \;\deq\; \sum_i a_{ij} X_i\,.
\end{equation*}
Note that $(b_j)$ and $(Y_j)$ are independent families. By conditioning on the family $(b_j)$, we therefore get from Lemma \ref{lm:MZ}
and the triangle inequality that
\begin{equation*}
\normbb{\sum_j b_j Y_j}_p \;\leq\; (C p)^{1/2} \, \mu_p \normbb{\sum_j \abs{b_j}^2}_{p/2}^{1/2} \;\leq\; (C p)^{1/2} \, \mu_p \pbb{\sum_j \norm{b_j}_p^2}^{1/2}\,.
\end{equation*}
Using Lemma \ref{lm:MZ} again, we have
\begin{equation*}
\norm{b_j}_p \;\leq\;  (C p)^{1/2} \, \mu_p \pbb{\sum_i \abs{a_{ij}}^2}^{1/2}\,.
\end{equation*}
This concludes the proof.
\end{proof}

\begin{lemma}\label{lm:aXX}
Let $X_1, \dots, X_N$ be independent random variables each satisfying \eqref{cond on X}, and suppose that the family $(a_{ij})$ is deterministic.
Then we have
\begin{equation*}
\normbb{\sum_{i \neq j} a_{ij} X_i X_j}_p \;\leq\; C p \, \mu_p^2  \pbb{\sum_{i \neq j} \abs{a_{ij}}^2}^{1/2}\,.
\end{equation*}
\end{lemma}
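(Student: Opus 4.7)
The plan is to reduce to the already-proved bilinear case (Lemma \ref{lm:aXY}) via a decoupling argument based on a random partition of the index set. The obstacle is that the two copies of $X_i$ appearing in $\sum_{i\neq j} a_{ij} X_i X_j$ are the same variables, so Lemma~\ref{lm:aXY} does not apply directly. The standard de la Peña-style trick is to introduce independent Bernoulli selectors $\epsilon_1,\ldots,\epsilon_N$ with $\P(\epsilon_i=1)=1/2$, independent of the $X_i$'s, and observe that for $i\neq j$,
\begin{equation*}
\E_\epsilon\qb{\epsilon_i(1-\epsilon_j)} \;=\; \tfrac{1}{4}.
\end{equation*}
Therefore
\begin{equation*}
\sum_{i\neq j} a_{ij} X_i X_j \;=\; 4\,\E_\epsilon \sum_{i\neq j} a_{ij} X_i X_j\, \epsilon_i(1-\epsilon_j),
\end{equation*}
and by Jensen's inequality (pulling the $p$-th power inside the conditional expectation $\E_\epsilon$) and then taking total expectation,
\begin{equation*}
\E\absbb{\sum_{i\neq j} a_{ij} X_i X_j}^p
\;\leq\; 4^p\, \E\absbb{\sum_{i\neq j} a_{ij} X_i X_j\, \epsilon_i(1-\epsilon_j)}^p.
\end{equation*}

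Next I would condition on the $\epsilon$'s. Setting $I\deq\{i:\epsilon_i=1\}$ and $J\deq\{j:\epsilon_j=0\}$, the sets $I$ and $J$ are disjoint, so the constraint $i\neq j$ becomes automatic, and
\begin{equation*}
\sum_{i\neq j} a_{ij} X_i X_j\, \epsilon_i(1-\epsilon_j) \;=\; \sum_{i\in I,\, j\in J} a_{ij} X_i X_j.
\end{equation*}
Because $I\cap J=\emptyset$, the families $(X_i)_{i\in I}$ and $(X_j)_{j\in J}$ are independent (as subfamilies of the independent family $(X_i)$). Hence, conditionally on $\epsilon$, Lemma~\ref{lm:aXY} applies with the coefficient array $\wt a_{ij}\deq a_{ij}\ind{i\in I,\,j\in J}$, yielding
\begin{equation*}
\E_X\absbb{\sum_{i\in I,\,j\in J} a_{ij} X_i X_j}^p
\;\leq\; (Cp\,\mu_p^2)^p \pbb{\sum_{i\in I,\,j\in J} \abs{a_{ij}}^2}^{p/2}.
\end{equation*}

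Finally, I would bound the inner double sum by $\sum_{i\neq j}\abs{a_{ij}}^2$ uniformly in $\epsilon$, take expectation in $\epsilon$ (which then drops out), combine with the factor $4^p$ and take $p$-th roots to obtain
\begin{equation*}
\normbb{\sum_{i\neq j} a_{ij} X_i X_j}_p \;\leq\; Cp\,\mu_p^2 \pbb{\sum_{i\neq j}\abs{a_{ij}}^2}^{1/2},
\end{equation*}
as required. The main conceptual step is the decoupling identity; everything after that is a direct invocation of Lemma~\ref{lm:aXY} and Jensen's inequality, with no delicate analytic estimates. One small care point is that conditioning on $\epsilon$ does not change the law of the $X_i$'s (since $\epsilon\perp X$), which ensures the hypotheses of Lemma~\ref{lm:aXY} hold uniformly over the realization of $\epsilon$.
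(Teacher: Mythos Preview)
Your proof is correct and is essentially the same decoupling argument as the paper's. The paper writes the identity $1 = \frac{1}{2^{N-2}} \sum_{I \sqcup J = \{1,\dots,N\}} \ind{i \in I} \ind{j \in J}$ for $i \neq j$ as a combinatorial sum over partitions and then applies the triangle inequality and Lemma~\ref{lm:aXY} to each summand; your Bernoulli selectors $\epsilon_i$ are just the probabilistic rephrasing of this uniform average over partitions, with Jensen's inequality playing the role of the triangle inequality.
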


\begin{proof}
The proof relies on the identity (valid for $i \neq j$)
\begin{equation} \label{identity for LDE}
1 \;=\; \frac{1}{Z_N} \sum_{I \sqcup J = \N_N} \ind{i \in I} \ind{j \in J}\,,
\end{equation}
where the sum ranges over all partitions of $\N_N = \{1, \dots, N\}$ into two sets $I$ and $J$, and $Z_N \deq 2^{N - 2}$ is independent of $i$ and $j$. Moreover, we have
\begin{equation} \label{comb bound}
\sum_{I \sqcup J = \N_N} 1 \;=\; 2^{N} - 2\,,
\end{equation}
where the sum ranges over nonempty subsets $I$ and $J$.
Now we may estimate
\begin{equation*}
\normbb{\sum_{i \neq j} a_{ij} X_i X_j}_p \;\leq\; \frac{1}{Z_N} \sum_{I \sqcup J = \N_N} \normbb{\sum_{i \in I} \sum_{j \in J} a_{ij} X_i X_j}_p \;\leq\; \frac{1}{Z_N} \sum_{I \sqcup J = \N_N} C p \, \mu_p^2  \pbb{\sum_{i \neq j} \abs{a_{ij}}^2}^{1/2}\,,
\end{equation*}
where we used that, for any partition $I \sqcup J = \N_N$, the families $(X_i)_{i \in I}$ and $(X_j)_{j \in J}$ are independent, and hence the Lemma \ref{lm:aXY} is applicable. The claim now follows from \eqref{comb bound}.
\end{proof}

As remarked above, the proof of Lemma \ref{lm:aXX} may be easily extended to multilinear expressions of the form $\sum_{i_1, \dots, i_k}^* a_{i_1 \dots i_k} X_{i_1} \cdots X_{i_k}$.
 
We may now complete the proof of Theorem \ref{thm: LDE}.

\begin{proof}[Proof of Theorem \ref{thm: LDE}]
The proof is a simple application of Chebyshev's inequality. Part (i) follows from Lemma \ref{lm:MZ}, part (ii) from Lemma \ref{lm:aXX}, and part (iii) from Lemma \ref{lm:aXY}. We give the details for part (iii).

For $\epsilon >0$ and $D>0$ we have
\begin{align*}
 \P \qBB{ \absbb{\sum_{i \neq j} a_{ij} X_i X_j} \geq N^\e\Psi}
& \;\leq\; \P\qBB{ \absbb{\sum_{i \neq j} a_{ij} X_i X_j} \geq N^\e\Psi \,,\, 
 \pbb{\sum_{i \neq j} \abs{a_{ij}}^2}^{1/2}\leq N^{\e/2}\Psi}
\\
&\qquad + \P \qBB{\pbb{\sum_{i \neq j} \abs{a_{ij}}^2}^{1/2} \geq N^{\e/2}\Psi} \\
&\;\leq\;
  \P \qBB{\absbb{ \sum_{i \neq j} a_{ij} X_i X_j} \geq N^{\e/2} 
\pbb{\sum_{i \neq j} \abs{a_{ij}}^2}^{1/2}} + N^{-D-1}
\\
&\;\leq\; \pbb{\frac{Cp \mu_p^2}{N^{\e/2}}}^p + N^{-D-1}
\end{align*}
for arbitrary $D$. In the second step we used the definition of $\pb{\sum_{i\ne j} |a_{ij}|^2}^{1/2} \prec \Psi$ with parameters $\e/2$ and $D+1$.
In the last step we used Lemma \ref{lm:aXX} by conditioning on $(a_{ij})$. Given $\e$ and $D$, there is a large enough $p$ such that the first term on the last line is bounded by $N^{-D - 1}$. Since $\epsilon$ and $D$ were arbitrary, the proof is complete.

The claimed uniformity in $u$ in the case that $a_{ij}$ and $X_i$ depend on an index $u$ also follows from the above estimate.
\end{proof}

\providecommand{\bysame}{\leavevmode\hbox to3em{\hrulefill}\thinspace}
\providecommand{\MR}{\relax\ifhmode\unskip\space\fi MR }
\providecommand{\MRhref}[2]{%
  \href{http://www.ams.org/mathscinet-getitem?mr=#1}{#2}
}
\providecommand{\href}[2]{#2}

\end{document}